\newtheorem{theorem}{Theorem}
\newtheorem{corollary}[theorem]{Corollary}
\newtheorem{lemma}[theorem]{Lemma}
\newtheorem{proposition}[theorem]{Proposition}
\newtheorem{definition}[theorem]{Definition}
\newtheorem{remark}[theorem]{Remark}
\newtheorem{assumption}[theorem]{Assumption}
\numberwithin{equation}{section}
\numberwithin{theorem}{section}
\newcommand{\ul}[1]{\underline{#1}}                                           
\newcommand{\mf}[1]{\mathfrak{#1}}                                            
\newcommand{\mc}[1]{\mathcal{#1}}                                             
\newcommand{\ms}[1]{\mathsf{#1}}                                              
\newcommand{\mi}[1]{\mathscr{#1}}                                             
\newcommand{\N}{\mathbb{N}}                                                   
\newcommand{\R}{\mathbb{R}}                                                   
\newcommand{\C}{\mathbb{C}}                                                   
\newcommand{\Sph}{\mathbb{S}}                                                 
\newcommand{\paren}[1]{\left(#1\right)}                                       
\newcommand{\floor}[1]{\lfloor{#1}\rfloor}                                    
\renewcommand{\grad}{\operatorname{grad}}                                     
\newcounter{proofpart}
\newcommand{\proofpart}[1]{%
  \par
  \addvspace{\medskipamount}%
  \stepcounter{proofpart}%
  \noindent\textbf{\theproofpart. #1}\par\nobreak\smallskip
  \@afterheading
}
\begin{document}

\title[Counterexamples to unique continuation]{On counterexamples to unique continuation for critically singular wave equations}

\author{Simon Guisset}
\address{School of Mathematical Sciences\\
Queen Mary University of London\\
London E1 4NS\\
United Kingdom}
\email{s.guisset@qmul.ac.uk}

\author{Arick Shao}
\address{School of Mathematical Sciences\\
Queen Mary University of London\\
London E1 4NS\\
United Kingdom}
\email{a.shao@qmul.ac.uk}

\begin{abstract}
We consider wave equations with a critically singular potential $\xi \cdot \sigma^{-2}$ diverging as an inverse square at a hypersurface $\sigma = 0$.
Our aim is to construct counterexamples to unique continuation from $\sigma = 0$ for this equation, provided there exists a family of null geodesics trapped near $\sigma = 0$.
This extends the classical geometric optics construction \cite{AB} of Alinhac-Baouendi (i) to linear differential operators with singular coefficients, and (ii) over non-small portions of $\sigma = 0$, by showing that such counterexamples can be further continued as long as this null geodesic family remains trapped and regular.
As an application to relativity and holography, we construct counterexamples to unique continuation from the conformal boundaries of asymptotically Anti-de Sitter spacetimes for some Klein-Gordon equations; this complements the unique continuation results of the second author with Chatzikaleas, Holzegel, and McGill \cite{hol_shao:uc_ads, hol_shao:uc_ads_ns, Shao22, McGill20} and suggests a potential mechanism for counterexamples to the AdS/CFT correspondence.
\end{abstract}

\maketitle

\section{Introduction} \label{sec.intro}

We study the non-uniqueness of solutions to the singular geometric wave equation
\begin{equation}\label{operator}
\mathcal{P} u := \left[ \Box_g + \frac{\xi(\sigma, y)}{\sigma^2} \right] u = 0 \text{,}
\end{equation}
on a domain $\Omega := ( 0, \sigma_0 ) \times \mc{I}$, where $\sigma_0 > 0$ and $\mc{I}$ is an open subset of $\R^d$.
Here, $g$ and $\xi$ denote a Lorentzian metric and a bounded function on $\Omega$, respectively, while $\sigma$ and $y$ are the projections to the $( 0, \sigma_0 )$- and $\mc{I}$-components of $\Omega$.
In particular, $\mc{P}$ contains a critically singular potential that diverges at $\sigma = 0$, and which at leading order has the same scaling as $\Box_g$.

We investigate when \emph{unique continuation} for \eqref{operator} fails from $\sigma = 0$, that is, when solutions of \eqref{operator} fail to be uniquely determined by their Cauchy data on $\sigma = 0$.
More specifically, we show that counterexamples to unique continuation exist when there is an appropriate family of trapped null geodesics (or bicharacteristics) near $\sigma = 0$.
This will be accomplished through explicit geometric optics constructions similar to those of Alinhac--Baouendi \cite{AB}, resulting in solutions to \eqref{operator} that are supported throughout $\Omega$ but nonetheless vanish to infinite order at $\sigma = 0$.

In the context of wave equations, our results extend \cite{AB} in two ways:
\begin{enumerate}
\item While \cite{AB} only treated operators with smooth and bounded coefficients, here we consider operators $\mc{P}$ with a potential $\xi \sigma^{-2}$ that becomes singular at the hypersurface $\sigma = 0$.

\item While the counterexamples of \cite{AB} were only locally defined near a single point $p$, our constructions persist as long as the null geodesic family does not develop caustics.
\end{enumerate}
Furthermore, the methods here should extend to linear differential operators treated in \cite{AB}.

Our main motivation lies in holography, toward a potential mechanism for counterexamples to the AdS/CFT correspondence in theoretical physics.
More specifically, we apply our result to construct counterexamples to unique continuation for Klein--Gordon equations from the conformal boundaries of asymptotically anti-de Sitter (aAdS) spacetimes; see Section \ref{sec.intro_aads} for further discussions.

\subsection{Background} \label{sec.intro_bg}

Consider a linear differential operator $\mc{L}$ defined on a domain $\mc{U} \subseteq \R^{d+1}$, and fix a hypersurface $\Sigma \subseteq \mc{U}$.
The problem of \emph{unique continuation} for $\mc{L}$ from $\Sigma$ is to determine whether Cauchy data on $\Sigma$ uniquely determines solutions of $\mc{L} u = 0$ on one side of $\Sigma$, or equivalently, \emph{whether any solution of $\mc{L} u = 0$ with zero Cauchy data on $\Sigma$ must vanish identically on one side of $\Sigma$}.
Of particular interest here are settings in which the Cauchy problem for $\mc{L}$ from $\Sigma$ is ill-posed.

Unique continuation has been extensively studied over the past century; for brevity, here we focus only on results that are of direct relevance to the present article.
An important early result is Holmgren's theorem \cite{holmg:uc_anal} for operators $\mc{L}$ with analytic coefficients---in this case, unique continuation holds (even for distributional solutions) when $\Sigma$ is noncharacteristic.
Next, the modern theory---in particular the classical results of Calder\'on, Carleman, and H\"ormander \cite{Calderon1958, carleman1939, Hörmander2009}---extended the analysis to various classes of operators $\mc{L}$ with non-analytic coefficients.
However, in this more general setting, unique continuation for $\mc{L}$ holds only under the stronger condition that $\Sigma$ is (\emph{strongly}) \emph{pseudoconvex} (with respect to the principal symbol of $\mc{L}$ and the side of $\Sigma$ in which one continues the solution).
The intuitive role of pseudoconvexity is that it rules out the existence of bicharacteristics that can be roughly considered as ``locally trapped near $\Sigma$".

The seminal results of Alinhac and Baouendi \cite{alin:non_unique, AB} demonstrated that pseudoconvexity is crucial to unique continuation.
In the absence of pseudoconvexity for $\mc{L}$, both \cite{alin:non_unique, AB} used geometric optics to construct local counterexamples to unique continuation, that is, solutions propagating along the trapped bicharacteristics near $\Sigma$ while vanishing to infinite order on $\Sigma$.
However, a fundamental feature in their construction is that one cannot choose the precise operator $\mc{L}$ for which the counterexample applies.
More specifically, they only showed \emph{there exists a smooth potential $V$}, vanishing to infinite order at $\Sigma$, \emph{such that unique continuation for $\mc{L} + V$ from $\Sigma$ fails}.

Moreover, this potential $V$ is generally necessary, as unique continuation may still hold via Holmgren's theorem whenever $\mc{L}$ has analytic coefficients.
For this reason, \cite{AB} interpreted its construction as a \emph{zeroth-order instability} for Holmgren's unique continuation result.

We also note the counterexamples of \cite{alin:non_unique, AB} are \emph{local}, that is, defined in a sufficiently small neighbourhood of some $p \in \Sigma$.
This raises the question of \emph{whether such counterexamples can be extended over a larger portion of $\Sigma$, over which the above-mentioned trapped bicharacteristics persist}.

\vspace{0.4pc}
From here on, we narrow our focus to \emph{geometric wave operators},
\[
\mc{L} := \Box_g := | \det g |^{ -\frac{1}{2} } \partial_\alpha \big( | \det g |^\frac{1}{2} g^{\alpha\beta} \partial_\beta \big) \text{,}
\]
which can be viewed as a second-order hyperbolic operator.
(Here, $g$ is a smooth Lorentzian metric on $\mc{U}$, and $\partial$ denotes coordinate derivatives on $\mc{U}$.)
In practice, $\Sigma$ will be either a timelike or a null hypersurface, so that the Cauchy problem for $\Box_g$ is ill-posed from $\Sigma$.

As before, the Alinhac--Baouendi machinery produces localised counterexamples to unique continuation for $\Box_g + V$, for appropriate potentials $V$.
Here, the bicharacteristics for $\Box_g$ are precisely the null geodesics of $g$, hence the constructions lie along such geodesic trajectories near $\Sigma$.
Also, as mentioned in \cite{AB}, the zero-order instability is manifested, for instance, when $\mc{L}$ is the classical wave operator $\Box$ and $\Sigma$ is a timelike hyperplane (which is noncharacteristic but not pseudoconvex).

In this paper, we consider the wave operator $\mc{P}$ from \eqref{operator}, which contains an additional potential $\xi \sigma^{-2}$ that becomes singular on the entire hypersurface $\Sigma := \{ 0 \} \times \mc{I}$.
Moreover, as this potential has the same scaling as $\Box_g$, it must be treated as ``principal".
In particular, its presence radically alters the nature of the equation and the asymptotic behaviours of solutions at $\Sigma$.
(For example, when $\Sigma$ is timelike, the Dirichlet and Neumann branches gain specific powers of $\sigma$ at $\Sigma$; see \cite{Enciso19, Warnick13}.)

Note that the constructions of \cite{AB} cannot be directly applied to $\mc{P}$, as the methods crucially rely on the smoothness of the operator on $\Sigma$.
Thus, one goal of this article is to show that \emph{geometric optics counterexamples can nonetheless be extended to singular operators such as $\mc{P}$}.
In addition, another goal is to show that \emph{these counterexamples persist along the lifespan of the trapped null geodesics near $\Sigma$}, thereby addressing the above-mentioned question of locality arising from \cite{AB}.

Finally, we mention that singular operators of the form $\mc{P}$ naturally arise in aAdS spacetimes; see Section \ref{sec.intro_aads} below.
Their unique continuation properties, which are connected to the AdS/CFT correspondence, form the main motivation for this study.

\subsection{Statement of the theorem} \label{sec.intro_thm}

Throughout, we will work with the following setting:

\begin{definition} \label{def.domain}
We consider the domain
\begin{equation}
\label{domain} \Omega := ( 0, \sigma_0 ) \times \mc{I} \text{,} \qquad \mc{I} := \mc{I}' \times ( s_-, s_+ ) \text{.}
\end{equation}
where $\sigma_0 > 0$, $( s_-, s_+ )$ is a finite interval containing $0$, and $\mc{I}'$ is an open subset of $\R^{d-1}$.
Also:
\begin{itemize}
\item Let $\sigma$ and $y$ denote the projections onto the $( 0, \sigma_0 )$- and $\mc{I}$-components of $\Omega$, respectively.

\item Let $\bar{y} := ( y^1, \dots, y^{d-1} )$, $s := y^d$ denote further projections onto $\mc{I}'$, $( s_-, s_+ )$, respectively.

\item Let $\partial_\sigma$ and $\partial_s$ denote derivatives in the $\sigma$- and $s$-components, respectively.

\item Let $D$, $\nabla$, and $\bar{\nabla}$ denote derivatives with respect to $( \sigma, y )$, $y$, and $\bar{y}$, respectively.
\end{itemize}
\end{definition}

Next, we define the precise asymptotic and regularity properties for our geometric quantities:

\begin{definition}
Given $m \geq 0$ and a normed vector space $V$, we let $\mc{B}_m^\infty ( \Omega; V )$ denote the space of functions $\psi \in C^\infty ( \Omega; V )$ satisfying the following bounds for every $k, l \geq 0$: 
\begin{align}
\label{B_infty} \begin{cases}
\sup_\Omega | \partial_\sigma^k \nabla^l \psi | < \infty &\qquad k \leq m \text{,} \\
\sup_\Omega | \sigma^{k-m} \partial_\sigma^k \nabla^l \psi | < \infty &\qquad k > m \text{.}
\end{cases}
\end{align}
\end{definition}

\begin{remark}
To reduce technicalities, the reader may first substitute $\mc{B}^\infty_m ( \Omega; V )$ with, for instance, the space of functions for which all derivatives are uniformly bounded.
The rationale for using $\mc{B}^\infty_m ( \Omega; V )$ is due to our applications to aAdS spacetimes, in which $g$ can develop logarithmic singularities in $\sigma$ at higher orders, leading to bounds of the form \eqref{B_infty}; see \cite{fef_gra:conf_inv, shao:aads_fg}.

We also note, on the other hand, that the conditions \eqref{B_infty} are not optimal for our main result to hold.
The spaces $\mc{B}^\infty_m ( \Omega; V )$ merely serve as a compromise between simplicity of presentation in the upcoming proofs and applicability to the aAdS settings of interest.
\end{remark}

Our main result can now be stated as follows: 

\begin{theorem} \label{Theorem}
Let $d \geq 2$, let $\Omega$ be as in Definition \ref{def.domain}, and let $\mathcal{P}$ be the operator
\begin{equation}
\label{general_op} \mc{P} := \Box_g + \frac{\xi}{\sigma^2} \text{,} 
\end{equation}
with $g \in \mc{B}_1^\infty ( \Omega; \R^{(d+1) \times (d+1)} )$ a Lorentzian metric and $\xi \in \mc{B}^\infty_0 ( \Omega; \C )$.
In addition:
\begin{itemize}
\item Suppose there exist constants $C > 0$ and $\gamma \geq 0$ such that $g$ satisfies
\begin{equation}
\label{boundedness_metric} g^{-1} (d\sigma, d\sigma) \geq C \sigma^\gamma \text{,}
\end{equation}

\item Suppose $\varphi \in \mc{B}_2^\infty (\Omega; \R)$ satisfies the following on $\Omega$:
\begin{equation}
\label{conditions_s} g^{-1} ( d \varphi, d \varphi ) = 0 \text{,} \qquad 2 \, \operatorname{grad}_g \varphi = \partial_s \text{.}
\end{equation}
\end{itemize}
Then, there exist functions $u, a \in C^\infty(\Omega; \C)$ such that:
\begin{itemize}
\item All derivatives of $u$ and $a$ vanish faster than $\sigma^N$ as $\sigma \searrow 0$ for any $N \geq 0$.

\item $u$ is supported on all of $\Omega$.

\item $u$ solves the following equation on $\Omega$:
\begin{equation}
\label{singular_equation} \mc{P} u = a u \text{.}
\end{equation}
\end{itemize}
\end{theorem}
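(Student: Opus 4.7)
The plan is a geometric-optics (WKB) construction along the null phase $\varphi$, in the spirit of Alinhac--Baouendi but adapted to the singular potential $\xi/\sigma^2$ and to the global region $\Omega$. The hypothesis $2\grad_g\varphi = \partial_s$ means that the null geodesics transporting the WKB amplitude are reparametrised integral curves of $\partial_s$, giving a clean transport direction across all of $\Omega$ without caustics. I aim to build a single smooth $u$ whose modulus is positive on $\Omega$ but whose all derivatives decay to infinite order at $\sigma = 0$, and for which $\mc{P}u$ decays strictly faster than $u$; setting $a := \mc{P}u / u$ then yields a $C^\infty(\Omega; \C)$ function vanishing to infinite order at $\sigma = 0$, and $\mc{P}u = au$ holds by construction.

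For a large parameter $\tau$ and an amplitude $A$, a direct computation gives
\begin{equation*}
e^{-i\tau\varphi}\,\mc{P}(A\, e^{i\tau\varphi}) = -\tau^2\, g^{-1}(d\varphi,d\varphi)\, A + i\tau \bigl( \partial_s A + \Box_g\varphi \cdot A \bigr) + \Box_g A + \tfrac{\xi}{\sigma^2}\, A,
\end{equation*}
so the $\tau^2$-term vanishes by the eikonal condition $g^{-1}(d\varphi,d\varphi) = 0$, and the $\tau^1$-term simplifies using $2\grad_g\varphi = \partial_s$. Expanding $A = \sum_{k \geq 0} \tau^{-k} A_k$ and matching powers of $\tau$ yields a hierarchy of inhomogeneous transport ODEs
\begin{equation*}
\partial_s A_k + \Box_g\varphi \cdot A_k = F_k[A_0,\dots,A_{k-1}]
\end{equation*}
along the smooth integral curves of $\partial_s$, solvable globally on $\Omega$ with freely prescribed smooth data at $s = 0$, using the regularity assumptions $\varphi \in \mc{B}_2^\infty$ and $g \in \mc{B}_1^\infty$. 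I would select $A_0$ to be nowhere zero on $\Omega$ and propagate this positivity to the higher $A_k$, ensuring a quantitative lower bound on $|A|$.

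The principal obstacle is that the source $F_k$ contains a contribution $(\xi/\sigma^2) A_{k-1}$ arising from the singular potential, so the naive $\tau^{-1}$-series is not uniformly bounded up to $\sigma = 0$ and, by itself, cannot deliver the required infinite-order vanishing at the boundary. To resolve this I would incorporate a $\sigma$-dependent damping into the phase, replacing $e^{i\tau\varphi}$ by $e^{i\tau\varphi - \tau \eta(\sigma)}$ for some $\eta(\sigma) \to +\infty$ as $\sigma \searrow 0$ (for instance $\eta(\sigma) = \sigma^{-\alpha}$ for a suitable $\alpha > 0$). This introduces an additional eikonal contribution $\tau^2 \eta'(\sigma)^2\, g^{-1}(d\sigma,d\sigma)\, A$, and the lower bound \eqref{boundedness_metric} allows $\eta$ to be tuned so that this new term dominates $\xi/\sigma^2$ and absorbs it consistently at each order of the transport hierarchy. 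A final Borel-type summation of truncations $\sum_{k \leq N(\tau,\sigma)} \tau^{-k} A_k$, with cutoffs linking $\tau$ and $\sigma$, produces a $C^\infty(\Omega;\C)$ function $u$ whose $\sigma \searrow 0$ asymptotics match the formal expansion. The delicate bookkeeping, and the hard part of the argument, is verifying that $\mc{P}u$ vanishes strictly faster at $\sigma = 0$ than $u$ itself so that $a := \mc{P}u/u$ is smooth on $\Omega$ and of infinite order at the boundary; this is precisely where the regularity class $\mc{B}_m^\infty$ and the non-degeneracy condition \eqref{boundedness_metric} are used.
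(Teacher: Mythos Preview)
Your proposal has a genuine structural gap. A single WKB beam at fixed frequency $\tau$ cannot produce a quotient $a=\mc{P}u/u$ that vanishes to infinite order at $\sigma=0$: after truncating the amplitude expansion at order $N$, the remainder satisfies $\mc{P}u/u = O(\tau^{-N})\cdot(\text{something in }\sigma)$, and no choice of fixed $\tau,N$ forces this to decay faster than every power of $\sigma$. Your proposed fix, the damping $e^{-\tau\eta(\sigma)}$ with $\eta(\sigma)\to\infty$, actually makes things worse: it introduces a \emph{new} top-order term $\tau^{2}(\eta'(\sigma))^{2}\,g^{-1}(d\sigma,d\sigma)$ into $e^{-i\tau\varphi+\tau\eta}\mc{P}(Ae^{i\tau\varphi-\tau\eta})$, which by \eqref{boundedness_metric} is bounded \emph{below} by $C\tau^{2}\sigma^{\gamma}(\eta')^{2}$ and hence blows up as $\sigma\searrow 0$. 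This term sits at order $\tau^{2}$, above the transport hierarchy, so it cannot be absorbed into the $F_k$'s as you suggest; it contaminates $a=\mc{P}u/u$ with an unbounded contribution.

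The mechanism the paper actually uses (following Alinhac--Baouendi) is not a single beam but a \emph{countable sum} $u=\sum_{n}\tilde v_{n}$ of beams at \emph{different} frequencies $\lambda_{n}=n^{2\alpha}$, each supported in a band $\Omega_{n}\approx\{\sigma\sim n^{-1}\}$ and damped by a factor $e^{f_{n}}$ with $f_{n}\approx -n^{2}$. The singular potential $\xi/\sigma^{2}\sim n^{2}$ is then merely a bounded (in $n$) perturbation on each band, and the growing frequency is what drives $\mc{P}u/u\to 0$ as $\sigma\searrow 0$. The price is that $u$ is no longer nonvanishing: adjacent beams $\tilde v_{n},\tilde v_{n+1}$ interfere, and $u$ can vanish on hypersurfaces $S_{n}$ where $|\tilde v_{n}|=|\tilde v_{n+1}|$. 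A substantial part of the argument is (i) showing that the $S_{n}$ are smooth graphs separating the bands, and (ii) \emph{modifying} each beam by a correction $\omega_{n}$ so that $\mc{P}\tilde v_{n}$ vanishes to sufficiently high order on $S_{n}\cup S_{n-1}$, forcing $\mc{P}u$ to vanish faster than $u$ there. Your proposal does not engage with either of these issues, and the ``Borel-type summation with cutoffs linking $\tau$ and $\sigma$'' is too vague to substitute for them.
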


Note that Theorem \ref{Theorem} yields a counterexample to unique continuation for $\mc{P} - a$ from $\{ 0 \} \times \mc{I}$, via a solution $u$ of \eqref{singular_equation} that is everywhere nontrivial on $\Omega$ but has zero Cauchy data on $\sigma = 0$.
Also, by zero-extending the quantities of Theorem \ref{Theorem} to a small part of $\sigma < 0$, we can hence view Theorem \ref{Theorem} as the failure of unique continuation for $\mc{P} - a$ across $\Sigma := \{ 0 \} \times \mc{I}$.

Below, we discuss the intuitions behind the hypotheses \eqref{boundedness_metric}--\eqref{conditions_s}, which are closely related to those found in the main result of \cite{AB}:
\begin{itemize}
\item The assumption \eqref{boundedness_metric} implies that the level sets of $\sigma$ in $\Omega$ are timelike.
When $\gamma = 0$, then the boundary $\sigma = 0$ must be timelike as well.
On the other hand, if $\gamma > 0$, then the level sets of $\sigma$ could asymptote to a null boundary $\sigma = 0$.

\item The first condition in \eqref{conditions_s} states that $\varphi$ solves the eikonal equation with respect to $g$.
Thus, the level sets of $\varphi$ are null hypersurfaces, and the integral curves of the gradient $\operatorname{grad}_g \varphi$ generate a family $\mc{N}$ of null geodesics in $\Omega$ that propagate along these null hypersurfaces.

While solutions of the eikonal equation always exist locally, the smoothness of $\varphi$ can be viewed as an additional regularity assumption imposed on all of $\Omega$.
In particular, this condition implies that the null geodesics in $\mc{N}$ do not develop any caustics in $\Omega$.

\item Next, the second condition in \eqref{conditions_s}, which represents a convenient choice of gauge, aligns our coordinates $( \sigma, \bar{y}, s )$ so that the null geodesics of $\mc{N}$ are precisely given by
\[
s \mapsto \gamma (s) := ( \sigma_\ast, \bar{y}_\ast, s ) \text{,} \qquad ( \sigma_\ast, \bar{y}_\ast ) \in ( 0, \sigma_0 ) \times \mc{I}' \text{.}
\]
Note that the null geodesics in $\mc{N}$ lie on level sets of $\sigma$, and the coordinate $s$ serves as an affine parameter for these geodesics.
The interpretation is that $\mc{N}$ represents the ``trapped null geodesics" near $\sigma = 0$, discussed in Section \ref{sec.intro_bg}, propagating in a direction ``almost parallel" to $\sigma = 0$.
Moreover, $\mc{N}$ asymptotes, as $\sigma \searrow 0$, to limiting bicharacteristic curves (generated by $\partial_s$) on $\sigma = 0$; this can be viewed as $\{ 0 \} \times \mc{I}$ barely failing to be pseudoconvex.
\end{itemize}

\begin{figure}[ht]
\centering
\includegraphics[scale=0.9]{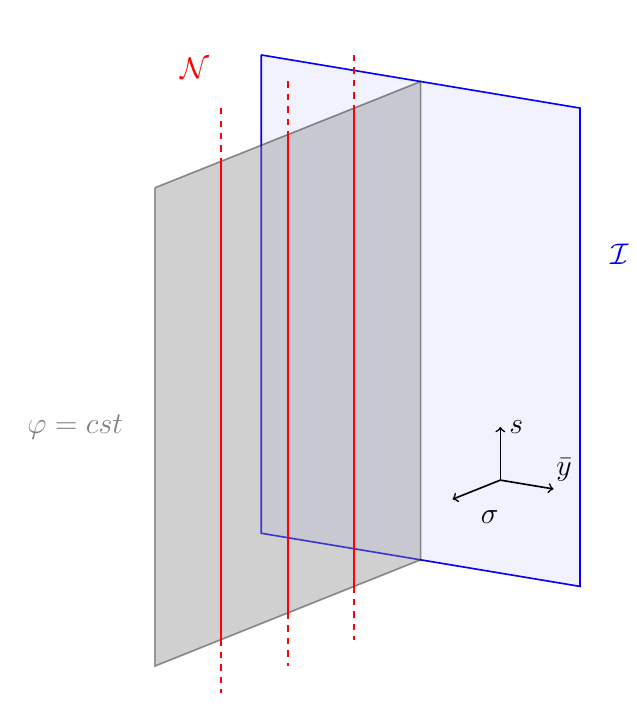}
\caption{Illustration of the setting of Theorem \ref{Theorem}, namely, of the geometry of $\Omega$ in our chosen $( \sigma, \bar{y}, s )$-coordinates.
The eikonal function $\varphi$ defines null hypersurfaces (in $\textcolor{gray}{grey}$) that intersect the boundary $\{ 0 \} \times \mc{I}$.
The gradient of $\varphi$ generates the null geodesic beams $\mc{N}$ (in \textcolor{red}{red}), along which our counterexample will propagate.}
\label{fig:example}
\end{figure}

\begin{remark}
The assumptions \eqref{boundedness_metric}--\eqref{conditions_s} can be viewed as direct extensions of the hypotheses of \cite{AB} to non-small boundary regions, at least in the context of geometric wave equations.
In particular, in \cite{AB}, the second part of \eqref{conditions_s} is replaced by a more general condition, $g^{-1} ( d \varphi, d \sigma ) = 0$.
However, since \cite{AB} only worked with sufficiently small neighbourhoods, one could always find a local change of coordinates such that the second part of \eqref{conditions_s} holds.
\end{remark}

While Theorem \ref{Theorem} is stated only for the critically singular wave equation \eqref{operator}, its proof should in principle extend to general higher-order linear differential operators treated in \cite{AB}---namely, those for which bicharacteristics are well-defined.
For this, the key step is to replace our geometric optics approximations with the more general analogues found within \cite{AB}.
Furthermore, using techniques introduced in this article, one can again allow for singular potentials behaving like powers of $\sigma$, and the counterexamples would extend for as long as the bicharacteristics persist.

Here, we opted to restrict our focus to geometric wave equations to simplify the presentation.
The specific form \eqref{operator} of the singular potential is taken due to its relevance to aAdS spacetimes (see Section \ref{sec.intro_aads}), and because such models are well-posed for a subclass of $\xi$; see \cite{Warnick13}.

Like in \cite{AB}, a key shortcoming of the approach in Theorem \ref{Theorem} is that the potential $a$ is part of the construction and cannot be freely chosen (although it does remain smooth up to $\sigma = 0$).
In particular, given any fixed (non-analytic) $a$, whether counterexamples to unique continuation exist for \eqref{operator} is generally not known.
It is also worth noting that all potentials $a$ and counterexamples $u$ constructed both in Theorem \ref{Theorem} and in \cite{AB} are genuinely complex-valued; whether real-valued $u$ and $a$ can be constructed in our context is currently unknown.

For similar reasons, whether analogous geometric optics counterexamples to unique continuation can be constructed for a fixed nonlinear wave equation is also a challenging open question.
Nonlinear settings also come with additional subtleties---for example, the now-classical result of M\'etivier \cite{Mtivier1993CounterexamplesTH} used the mechanism of \cite{AB} to construct counterexamples to unique continuation for some nonlinear PDEs, in particular showing that Holmgren's theorem fails to extend to nonlinear equations.

\subsection{Asymptotically anti-de Sitter spacetimes} \label{sec.intro_aads}

AdS/CFT can roughly be described as a one-to-one correspondence between a gravitational theory formulated on an asymptotically AdS background and a field theory on its conformal boundary; see \cite{Gubser_1998, Maldacena_1999, witten1998anti} for the seminal literature.
One rigorous formulation of the above, in the setting of classical relativity, is as a unique continuation problem for the Einstein-vacuum equations (EVE) from the conformal boundary---that is, establishing a one-to-one correspondence between aAdS solutions of the EVE and some space of (Cauchy) data on the conformal boundary.
Such results have been proven in time-independent contexts in \cite{and_herz:uc_ricci, and_herz:uc_ricci_err, biq:uc_einstein, chru_delay:uc_killing}; recently, Holzegel and the second author \cite{Holzegel22} proved a general result in dynamical settings, under additional geometric assumptions on the boundary.
See, e.g., the introduction of \cite{Holzegel22} for further references and for further details on this topic.

For aAdS spacetimes that solve the EVE, the components $\Psi$ of the Weyl curvature satisfy a system of wave equations of the form
\[
( \Box_g + m_\Psi ) \Psi = \text{l.o.t.,}
\]
for constants $m_\Psi \in \R$ that depend on the dimension and on the specific component $\Psi$.
(Here, ``l.o.t." refers to various nonlinear terms in $\Psi$ and its derivative.)
Thus, one linearised model formulation of the above problem is to consider a Klein--Gordon equation,
\begin{equation}
\label{aads_kg} ( \Box_{ \mathbf{g} } - m ) v = 0 \text{,} \qquad m \in \R \text{,}
\end{equation}
on a fixed aAdS spacetime $(\mc{M}, \mathbf{g} )$, for which $\mathbf{g}$ near its \emph{conformal boundary} $\mathscr{I}$ takes the form
\begin{equation}
\label{aads_metric} \mathbf{g} := \rho^{-2} [ d \rho^2 + \mathsf{g} ( \rho ) ] \text{,}
\end{equation}
where $\rho \in [ 0, \rho_0 )$ gives a measure of distance from the boundary, and $\mathsf{g} ( \rho )$ is a one-parameter family of metrics on $\mathscr{I}$.
Equation \eqref{aads_kg} has been studied from various perspectives \cite{BACHELOT2011527, breit_freedm:stability_sgrav, Holzegel12, Vasy09, Warnick13}, including boundary asymptotics, well-posedness, dynamics, and microlocal properties.

By considering instead the conformally transformed metric $g := \rho^2 \mathbf{g}$, which is now regular at the boundary, \eqref{aads_kg} now becomes equivalent to the \emph{critically singular} wave equation: 
\begin{equation}
\label{singular_kg} \big( \Box_g + C_m \rho^{-2} \big) u = \text{l.o.t.}
\end{equation} 
In particular, $C_m \neq 0$ whenever $4m \neq d^2-1$, and \eqref{singular_kg} becomes an equation of the form \eqref{operator}.
Applying a further change of parameter $\rho \mapsto \sigma$ (see Section \ref{sec.aads}), the operator \eqref{singular_kg} can be seen to also satisfy the conditions \eqref{general_op}--\eqref{boundedness_metric} of Theorem \ref{Theorem}.
As a result, Theorem \ref{Theorem} could be applied to generate counterexamples to unique continuation for \eqref{singular_kg}---or equivalently, \eqref{aads_kg} modulo a smooth potential---from the conformal boundary, provided a $\varphi$ satisfying \eqref{conditions_s} can be found.

In a series of articles \cite{hol_shao:uc_ads, hol_shao:uc_ads_ns, McGill20}, culminating in \cite{Shao22}, the authors establish unique continuation for \eqref{aads_kg} (plus additional lower-order terms) from a region $\mi{D}$ within the conformal boundary $\mi{I}$, provided $\mi{D}$ satisfies a geometric condition:\ the \emph{generalised null convexity condition} (\emph{GNCC}).
Roughly speaking, the GNCC can be interpreted as $\mi{D}$ being ``large enough" such that the above-mentioned ``trapped null geodesics" near the boundary cannot persist over all of $\mi{D}$.
Theorem \ref{Theorem}, in the aAdS context, then serves as a complement to the unique continuation results of \cite{Shao22}, in that if such trapped null geodesics (described by the eikonal function $\varphi$) do persist over all of $\mi{D}$, so that the GNCC fails, then unique continuation fails, and counterexamples can indeed be constructed.

In this way, Theorem \ref{Theorem} and the geometric optics construction behind its proof yield a mechanism for counterexamples to unique continuation for the linearised model \eqref{aads_kg} of AdS/CFT.
In addition, both the unique continuation results and the GNCC of \cite{Shao22} are crucial ingredients in \cite{Holzegel22} for proving unique continuation for the full EVE.
Consequently, the main result of this article could also serve as a potential mechanism for counterexamples to AdS/CFT.

In Section \ref{sec.aads}, we will illustrate, in detail, the above constructions of counterexamples in the special cases of pure AdS and planar AdS spacetimes.
For pure AdS, we extend our counterexamples over any time slab comprising less than one AdS cycle, while for planar AdS, we generate counterexamples that persist for an arbitrarily long timespan.
(In fact, the above-mentioned timespans are optimal in light of the unique continuation results of \cite{Shao22, hol_shao:uc_ads, hol_shao:uc_ads_ns, McGill20}.)
We also outline how this process can then be generalised to a larger class of asymptotically AdS spacetimes.

While the construction is relatively straightforward for planar AdS spacetime, the process is more complicated for pure AdS.
This is since the AdS conformal boundary has compact spherical cross-sections, which prevents $\varphi$ from being globally defined nearby.
As a result, Theorem \ref{Theorem} only yields counterexamples that are smoothly defined along a family $\mc{N}$ of null geodesics near a local sector of the conformal boundary.
To obtain counterexamples smoothly defined in a neighbourhood of the entire boundary, we must apply an additional cutoff that is carefully adapted to the family $\mc{N}$ of null geodesics; see Sections \ref{sec:planar} and \ref{sec:pure} for details.

Below, we give informal statements of our results for planar and pure AdS spacetimes, and we refer the reader to Sections \ref{sec:planar} and \ref{sec:pure} for the precise statements and proofs:

\begin{corollary} \label{corollary_planar_pre}
Let $(\mc{M}_{plan}, g_{plan})$ denote the planar AdS spacetime (see \eqref{aads_planar}), and fix $\mu \in \R$.
Then, for any $t_- < t_+$, there exists a smooth potential $V$ and a smooth counterexample $u$ to unique continuation for the Klein--Gordon equation
\[
( \Box_{ g_{plan} } + \mu ) u = Vu \text{,} 
\]
that is defined near the conformal boundary over the timespan $\{ t_- < t < t_+ \}$.
\end{corollary}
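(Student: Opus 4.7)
My plan is to reduce Corollary \ref{corollary_planar_pre} to a direct application of Theorem \ref{Theorem} via the conformal rescaling outlined in Section \ref{sec.aads}. Near the conformal boundary $\rho = 0$, the planar AdS metric takes the form $g_{plan} = \rho^{-2}(-dt^2 + d\rho^2 + |d\bar{x}|^2)$ with $\bar{x} \in \R^{d-1}$. The rescaled metric $g := \rho^2 g_{plan}$ is simply the Minkowski metric, and the substitution $u = \rho^{(d-1)/2} w$ converts $(\Box_{g_{plan}} + \mu) u = 0$ into an equation $\mc{P} w = 0$ with $\mc{P} = \Box_g + \xi \sigma^{-2}$, where $\sigma := \rho$ and $\xi$ is a constant depending on $\mu$ and $d$. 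The structural hypotheses of Theorem \ref{Theorem} are immediate: $g$ is smooth Lorentzian (hence in $\mc{B}_1^\infty$), $\xi$ is constant (hence in $\mc{B}_0^\infty$), and $g^{-1}(d\sigma, d\sigma) = 1$, so \eqref{boundedness_metric} holds with $\gamma = 0$.

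The remaining ingredient is the eikonal function together with an adapted coordinate system. Fixing a distinguished spatial direction, say $x^1$, I would set $\varphi := \tfrac{1}{2}(x^1 - t)$, which trivially satisfies the eikonal equation on Minkowski. Introducing $\tau := \varphi$, $s := \tfrac{1}{2}(t + x^1)$, and $\bar{y}'' := (x^2, \dots, x^{d-1})$, a direct computation shows $2 \operatorname{grad}_g \varphi = \partial_s$ in the coordinates $(\sigma, \tau, \bar{y}'', s)$. Since $\varphi$ is linear in these coordinates, it lies in $\mc{B}_2^\infty$ over any domain of the form prescribed in Definition \ref{def.domain}, and the associated null geodesic family $\mc{N}$ consists of straight lines in the $s$-direction.

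To cover the time slab $\{t_- < t < t_+\}$, pick $\epsilon \in (0, (t_+ - t_-)/2)$ and any bounded open $\mc{I}'' \subseteq \R^{d-2}$, and set
\begin{equation*}
\Omega := (0, \sigma_0) \times \mc{I}' \times (s_-, s_+), \quad \mc{I}' := (-\epsilon, \epsilon) \times \mc{I}'', \quad (s_-, s_+) := (t_- + \epsilon, t_+ - \epsilon).
\end{equation*}
Since $t = s - \tau$, this guarantees $\Omega \subset \{t_- < t < t_+\}$. Applying Theorem \ref{Theorem} then produces smooth $w, a$ on $\Omega$ with $\mc{P} w = aw$ and with both quantities vanishing to infinite order as $\sigma \searrow 0$. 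Undoing the conformal rescaling, $u := \rho^{(d-1)/2} w$ satisfies $(\Box_{g_{plan}} + \mu) u = V u$, where $V$ is the smooth potential, vanishing to infinite order at $\rho = 0$, obtained algebraically from $a$ via the conformal transformation.

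No serious obstacle arises: the proof is essentially mechanical once the conformal correspondence is set up, and the freedom to take $t_\pm$ arbitrary reflects nothing more than the translation invariance of Minkowski space, which prevents caustics in $\mc{N}$. This stands in sharp contrast to the pure AdS case, where the compact spherical cross-sections of $\mi{I}$ force focal constraints along the null geodesic family and necessitate the additional cutoff procedure alluded to in the introduction.
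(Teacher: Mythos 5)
Your proposal is correct and follows essentially the same strategy as the paper: the conformal rescaling to $\bar{g}_{plan}$, a linear eikonal function $\varphi$ tied to a fixed null direction, verification of the hypotheses of Theorem \ref{Theorem}, and the pull-back $u = \rho^{(d-1)/2} w$ (note that the resulting potential is $V = \rho^2 a$, which still vanishes to infinite order at $\rho = 0$, so your conclusion holds). The one genuine difference is the choice of adapted coordinates. You take the pair of double-null coordinates $\tau = \tfrac{1}{2}(x^1 - t)$ and $s = \tfrac{1}{2}(t + x^1)$, so that the physical time $t = s - \tau$ depends on \emph{both} $s$ and the transverse variable $\tau$. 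Because Theorem \ref{Theorem} needs a product domain $(0,\sigma_0) \times \mc{I}' \times (s_-, s_+)$, you are forced to shrink $\tau$ to $(-\epsilon,\epsilon)$, and the resulting counterexample lives only on a diagonal stripe $\lvert x^1 - t\rvert < 2\epsilon$ inside the time slab. The paper instead chooses $s := t$ and the ``drifting'' transverse frame $\bar{y} := \bar{x} - t\bar{k}$, which still satisfies $2\,\grad_{\bar{g}_{plan}}\varphi = \partial_s$ but makes the full time slab $\{t_- < t < t_+\}$ itself a coordinate product, yielding a counterexample supported on the whole slab (Corollary \ref{corollary_planar_1}). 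Both routes prove the informal Corollary \ref{corollary_planar_pre}, but the paper's coordinate choice is the one that delivers the stronger, precise version — and also sets up the subsequent localisation trick of Corollary \ref{corollary_planar_2}, where one wants the freedom to deform the $\sigma$-foliation over \emph{all} of $\bar{y}$-space at once. Your construction in effect builds that $\tau$-localisation into the coordinate change from the start, which costs you the uniform coverage of the slab. One further small note: like the paper, you implicitly need to translate $s$ so that $0 \in (s_-,s_+)$ (the transport equations \eqref{def_c_0}--\eqref{def_c_j} are initialised at $s=0$), but this is harmless.
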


\begin{corollary} \label{corollary_pure_pre}
Let $(\mc{M}_{AdS}, g_{AdS})$ denote the pure AdS spacetime (see \eqref{aads_pure}), and fix $\mu \in \R$.
Then, for any $t_- < t_+$ satisfying $t_+ - t_- < \pi$, there exists a smooth potential $V$ and a smooth counterexample $u$ to unique continuation for the Klein--Gordon equation
\[
( \Box_{ g_{AdS} } + \mu ) u = Vu \text{,} 
\]
that is defined near the conformal boundary over the timespan $\{ t_- < t < t_+ \}$.
\end{corollary}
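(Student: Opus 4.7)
The plan is to verify the hypotheses of Theorem \ref{Theorem} on a coordinate patch near the conformal boundary of pure AdS, apply the theorem to produce a local counterexample, and then use a cutoff adapted to the near-boundary null geodesic family to promote it to a counterexample defined in a neighbourhood of the full boundary region $\{ t_- < t < t_+ \} \times \Sph^{d-1}$. The starting point is to write the global AdS metric
\[
g_{AdS} = \cos^{-2}\chi \, \big( -dt^2 + d\chi^2 + \sin^2\chi \, d\omega^2_{\Sph^{d-1}} \big),
\]
introduce the boundary-defining function $\sigma := \cos\chi$, and perform the conformal rescaling $\bar{g} := \sigma^2 g_{AdS} = -dt^2 + (1-\sigma^2)^{-1} d\sigma^2 + (1-\sigma^2) d\omega^2_{\Sph^{d-1}}$, which extends smoothly to $\sigma = 0$. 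A standard substitution $u = \sigma^{(d-1)/2} \tilde u$ converts $(\Box_{g_{AdS}} + \mu) u = 0$ into an equation of the form $\mc{P} \tilde u = (\mathrm{l.o.t.})\, \tilde u$, where $\mc{P} = \Box_{\bar g} + \xi \sigma^{-2}$ with $\xi \in \mc{B}^\infty_0$ determined by $\mu$ and $d$; the smooth lower-order terms will be absorbed into the final potential $V$.

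Next, I would construct $\varphi$. On the conformal boundary $\R \times \Sph^{d-1}$ with induced metric $-dt^2 + d\omega^2$, null geodesics are parametrized by arc length along great circles, with $t = s + \text{const}$. Fix such a boundary null geodesic whose time parameter ranges over $(s_-, s_+) := (t_-, t_+)$, and introduce coordinates $\bar{y} \in \mc{I}' \subset \R^{d-1}$ transverse to this great circle on $\Sph^{d-1}$. The hypothesis $t_+ - t_- < \pi$ ensures that the geodesic covers strictly less than a half great circle and hence does not self-intersect, so the bicharacteristic flow of the extended null geodesic family $\mc{N}$ into the interior stays caustic-free over the full $s$-range. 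Solving the eikonal equation $\bar{g}^{-1}(d\varphi, d\varphi) = 0$ with boundary data $\varphi|_{\sigma=0} = s$ by the method of characteristics produces a smooth $\varphi$ on a coordinate patch $\Omega = (0, \sigma_0) \times \mc{I}' \times (s_-, s_+)$, and a subsequent normalisation of $\sigma$ and $\bar{y}$ along the bicharacteristic flow arranges the gauge condition $2 \grad_{\bar g}\varphi = \partial_s$.

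The remaining hypotheses of Theorem \ref{Theorem} are then readily verified: the smoothness of $\bar g$ up to $\sigma = 0$ gives $\bar g \in \mc{B}^\infty_1$ and $\varphi \in \mc{B}^\infty_2$; the computation $\bar g^{-1}(d\sigma,d\sigma) = 1-\sigma^2$ yields \eqref{boundedness_metric} with $\gamma = 0$ on a slab $\sigma_0$ small enough. Theorem \ref{Theorem} then produces $u_0, a_0 \in C^\infty(\Omega;\C)$ satisfying $\mc{P}u_0 = a_0 u_0$ with all derivatives vanishing to infinite order at $\sigma = 0$. Undoing the conformal rescaling converts this into $(\Box_{g_{AdS}} + \mu) u_0 = V_0 u_0$ for some smooth $V_0$ on $\Omega$, but $\Omega$ only covers a strip around one great circle on $\Sph^{d-1}$, so this is not yet a counterexample in a neighbourhood of the full boundary sector.

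The main obstacle is the final step: since $\varphi$ cannot be defined on a full neighbourhood of $\Sph^{d-1}$ due to spherical compactness, I must introduce a cutoff $\chi(\bar{y})$ supported in $\mc{I}'$ and define $u := \chi\, u_0$, extending by zero outside $\Omega$. A naive cutoff produces a commutator $[\mc{P}, \chi] u_0$ which cannot in general be written as a smooth multiple of $u$. My plan to circumvent this is to revisit the WKB scheme underlying Theorem \ref{Theorem} and arrange for all iterated amplitudes $a_j(\sigma,y)$ in the asymptotic ansatz $u_0 \sim \sum_j a_j e^{i\lambda_j \varphi}$ to have $\bar{y}$-support contained in a fixed compact subset of $\mc{I}'$ throughout the construction. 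Since the leading transport equation is $\partial_s a_j = (\text{l.o.t.})$ along $\partial_s$ and carries $\bar{y}$-support forward rigidly, and since the successive corrections solve analogous transport equations, compact $\bar y$-support should be preserved at every step. Then $u_0$ itself vanishes identically for $\bar y$ outside a fixed compact set, the cutoff $\chi$ can be chosen to equal $1$ on that set, and $u = \chi u_0 = u_0$ extends smoothly by zero without any commutator error, yielding $V = V_0$ extended by zero. The delicate point will be verifying that this transverse localisation survives the iterative amplitude construction while still producing a function that is supported on all of the smaller tube along $\mc{N}$, as needed for a genuine counterexample.
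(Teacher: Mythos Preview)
Your reduction to Theorem \ref{Theorem} via the conformal rescaling and the construction of adapted coordinates $(\sigma, \bar y, s)$ is sound and is essentially the general aAdS argument of Section \ref{sec:aads_general} specialised to AdS. The genuine gap is in the localisation step.

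You are right that compact $\bar y$-support of the initial profiles propagates through the transport hierarchy, so the modified $u_0$ vanishes outside a fixed $\bar y$-tube. But this does \emph{not} make $V_0 = \mc{P} u_0 / u_0$ extend smoothly by zero across the edge of that tube. The machinery in the proof of Theorem \ref{Theorem}---the interference analysis for the hypersurfaces $S_n$, the corrections $\omega_n$, and the final estimates on $a$---is engineered to control $\mc{P} u / u$ only at the zeros of $u$ arising from the $\sigma$-structure. If you insert a cutoff $\eta(\bar y)$ into the initial data, then the leading amplitude $c_{n,0}$ acquires a factor $\eta$, while the higher corrections $c_{n,\star}$ and the error $\psi_n$ involve $T_{2,n}$ and hence $\bar\nabla^2 \eta$; near $\partial\{\eta > 0\}$ the quotient behaves schematically like $(\bar\nabla^2 \eta)/\eta$, which diverges. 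Nothing in the construction forces $\mc{P} u_0$ to vanish to the same order as $u_0$ at this new zero set, and the argument for smoothness of $a$ breaks down there.

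The paper handles this by a different mechanism: rather than localising the amplitudes, one deforms the boundary-defining function, setting $\tilde\sigma := \sigma - \Psi(\bar y)$ for a smooth bump $\Psi$ that vanishes on an inner region $B_i$ and equals a fixed $\sigma_1 > 0$ outside $B_o$. The level sets of $\tilde\sigma$ are still ruled by the same null geodesic family $\partial_s$, so Theorem \ref{Theorem} applies verbatim with $\tilde\sigma$ in place of $\sigma$, and its conclusion already guarantees that both $u$ and $a$ vanish to infinite order at $\tilde\sigma = 0$. Restricting to the slab $\{\sigma < \sigma_1\}$, the region $\bar y \notin B_o$ lies entirely in $\{\tilde\sigma < 0\}$, where $u$ and $a$ have been extended by zero; the smoothness of $a$ across this edge is now a \emph{consequence} of the theorem rather than a separate verification. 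For pure AdS specifically, the paper shortcuts even the eikonal construction by isometrically embedding the half-slab $\{|\tau| < \tfrac{\pi}{2} - \epsilon,\ \omega^d < 0\}$ into planar AdS via the Poincar\'e patch and invoking the planar result (with the $\tilde\sigma$-trick already built in) directly.
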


Finally, in light of AdS/CFT, it would be interesting to see to see whether this geometric optics mechanism could be used to build counterexamples to unique continuation for the (exact) Einstein-vacuum equations, $\operatorname{Ric}_g = -d \cdot g$, on aAdS settings in the absence of the GNCC of \cite{Shao22, Holzegel22}. 
A positive answer would complement the result of \cite{Holzegel22}, proving that the GNCC is the sharp criterion for unique continuation for the EVE from the boundary.

By the reasons stated at the end of Section \ref{sec.intro_thm}, the above would be a challenging question, for which major new ideas would be necessary.
However, we mention the recent articles of Touati \cite{toua:go_const, toua:go_eve, toua:go_wave}, which rigorously obtained---in asymptotically flat, well-posed settings---high-frequency solutions to the EVE and various semilinear wave equations through a geometric optics mechanism.

\subsection{Sketch of the construction} \label{sec:geom_optics}

We now turn our attention toward the proof of Theorem \ref{Theorem}.
Intuitively, the idea is the same as \cite{AB}---to propagate approximate solutions of \eqref{operator} along the family $\mc{N}$ of trapped null geodesics near $\sigma = 0$, as discussed in Section \ref{sec.intro_thm}.
Since an individual solution $v$ is chosen to be localised along a geodesic $\gamma \in \mc{N}$, which lies on level sets of $\sigma$, then $v$ by design does not contain any Cauchy trace at $\sigma = 0$.
By carefully combining a countable family of such $v$, one obtains an approximate solution supported on all of $\Omega$, but vanishing to all orders at $\sigma = 0$.

Let us first recall the geometric optics approximation. 
In our context, we consider
\begin{equation}
\label{geom_optics} \psi = e^{i\lambda \varphi} b \text{,} \qquad b := \sum_{k=0}^N b_k \lambda^{-k} \text{,}
\end{equation}
with $N, \lambda \gg 1$ sufficiently large.
Moreover, $\varphi$ is the eikonal function from Theorem \ref{Theorem} generating the null geodesic family $\mc{N}$, and the sequence $(b_k)_{k \leq N}$ of coefficients satisfy the following system of linear transport equations along the geodesics of $\mc{N}$:
\begin{align}
\label{set_transport_classic} i \paren{ \partial_s b_0 + \Box_g \varphi \, b_0} &= 0 \text{,} \\
\notag i \paren{ \partial_s b_k + \Box_g \varphi \, b_k} + \mc{P} b_{k-1} &= 0 \text{,} \quad 1 \leq k \leq N \text{.}
\end{align}

From the system \eqref{set_transport_classic}, we see that $\psi$ in \eqref{geom_optics} solves an approximate wave equation,
\begin{equation}
\label{approx_wave} \mc{P} \psi = e^{i\lambda \varphi} \lambda^{-N} \mc{P} b_N \text{,}
\end{equation}
and one can then hope to improve the approximation by increasing $\lambda$.
Notice that we terminate the sum \eqref{set_transport_classic} after a finite number of terms, as there is no expectation that the series $N \nearrow \infty$ converges.
Thus, an important ingredient will be to have uniform estimates of $b_N$ in terms of $\lambda$. 

To prove Theorem \ref{Theorem}, we largely follow the constructions from \cite{AB}, which we briefly summarise below.
The desired counterexample $u$ and error potential $a$ are defined to be
\begin{equation}
\label{counterexample} u := \sum_n \tilde{v}_n \text{,} \qquad a := \frac{\mc{P}u}{u} \text{,}
\end{equation}
where each beam $\tilde{v}_n := e^{ f_n } \psi_n$ consists of a geometric optics approximation $\psi_n$ as in \eqref{geom_optics}, with frequency $\lambda_n$ growing as a larger positive power of $n$, and supported in a strip $\sigma \sim n^{-1}$.
In addition, the amplitude of $\psi_n$ is attenuated by a factor $e^{ f_n }$ that vanishes exponentially at $\sigma = 0$.
Thus, by construction, $u$ indeed vanishes to infinite order as $\sigma \searrow 0$. Figure \ref{fig:example} gives a rough representation of how the geometric beams are transported along the trapped null geodesics. 

To complete the proof, it remains to show that $a$ in \eqref{counterexample} is well-defined on $\Omega$ and vanishes to infinite order at $\sigma = 0$.
As in \cite{AB}, to achieve the first task, we must address two technical issues:
\begin{enumerate}
\item One must control where $u$ vanishes on $\Omega$, which can occur when adjacent beams $\tilde{v}_n$, $\tilde{v}_{n-1}$ have equal amplitude.
Here, by adjusting the $f_n$'s, one ensures that $u$ can vanish only on a sequence $( S_n )$ of hypersurfaces in $\Omega$, for which any $S_n$, $S_{n-1}$ are sufficiently separated.

\item On each $S_n$, one must ensure that $\mc{P} u$ vanishes to higher order than $u$.
This is achieved by modifying the approximations $\psi_n$ to force $\mc{P} u = 0$ on each $S_n$.
The property \eqref{approx_wave} of $\psi_n$ is then crucial to ensure that the modification only occurs at high order in $\lambda_n$.
\end{enumerate}
Once $a$ is shown to be well-defined, one then uses that $u$ is comprised of successively improving geometric optics approximations, along with \eqref{approx_wave}, to show $a$ vanishes to any order as $\sigma \searrow 0$.

While the proof of Theorem \ref{Theorem} shares the same outline above as \cite{AB}, the key differences between our proof and that of \cite{AB} lie in how (1) and (2) are handled.
In particular, \cite{AB} makes crucial use of the fact that its operator $\mc{L}$ is smooth up to and including the boundary $\Sigma$.
This allows the authors to construct instead a continuous $1$-parameter family $( \tilde{w}_\delta )_\delta$ of geometric optics beams that also depend smoothly on $\delta \in [ 0, \delta_0 )$, from which they then extract the beam sequence $\smash{ \tilde{v}_n := \tilde{w}_{ \delta^{-1} } }$.
The smoothness at $\delta = 0$ leads to the necessary uniform control for the $\tilde{v}_n$'s.

In contrast, as our $\mc{P}$ is singular at $\sigma = 0$, we must take a different approach and instead construct our $\tilde{v}_n$'s directly.
Since this process avoids the boundary $\sigma = 0$ altogether, more care is now needed to ensure that the requisite control holds for the $\tilde{v}_n$'s uniformly in $n$.
Another price to be paid is that here we can only uniformly control a finite number of derivatives of each $\tilde{v}_n$; however, this will not pose a serious problem, as the number of derivatives becomes arbitrarily large as $n \nearrow \infty$.

With regards to (1), since \cite{AB} was only concerned with a small enough neighbourhood of a point, the authors could generate the hypersurfaces $S_n$  through the implicit function theorem.
As we want our counterexamples to persist on all of $\Omega$ (where $\sigma$ and $\varphi$ remain well-defined), we must be more refined in our approach.
Instead, by a direct analysis of $| \tilde{v}_n |$ and $| \tilde{v}_{n-1} |$ via our uniform estimates, we generate global hypersurfaces $S_n$ in $\Omega$ that contain all the vanishing points of $u$.
This, crucially, allows us to construct $u$ over all of $\Omega$, rather than only locally near a single point.

Regarding (2), the argument in \cite{AB} proceeds by treating $\delta$ in $( \tilde{w}_\delta )$ as an extra real variable.
This, along with the smoothness at $\delta = 0$, allows the authors to apply the Whitney extension theorem to construct modifications $\omega_n$ to the $\psi_n$'s that are uniformly controlled in $n$ and that achieve the requisite vanishing of $\mc{P} u$ at each $S_n$.
Here, we unfortunately cannot apply the Whitney theorem in the same manner, as we only have finite-order control for each of our $\tilde{v}_n$'s.
Instead, we construct the $\omega_n$'s directly by interpolating between its prescribed values on $S_n$ and $S_{n-1}$, and in a sufficiently careful manner so that the $\omega_n$'s can be adequately controlled uniformly in $n$.

The detailed proof of Theorem \ref{Theorem} will be provided in Section \ref{sec.const}.
This argument can roughly be divided into the following three main steps, each treated in an individual subsection:
\begin{itemize}
\item First, we define a preliminary version of the geometric optics beams $\tilde{v}_n$, and we obtain uniform bounds for the $\tilde{v}_n$'s using the system \eqref{set_transport_classic} of transport equations.

\item Next, we characterise the hypersurfaces $S_n$ from (1).
We then modify each beam $\tilde{v}_n$ to deal with (2), and we obtain uniform bounds for these modified beams.

\item Finally, we define $u$ and $a$ as in \eqref{counterexample}, and we show both vanish to all orders as $\sigma \searrow 0$.
\end{itemize}

\section{Proof of Theorem \ref{Theorem}} \label{sec.const}

Throughout this section, we assume the hyptheses of Theorem \ref{Theorem}, in particular the domain $\Omega$ from Definition \ref{def.domain}, the metric $g$, the eikonal function $\varphi$, as well as the function $\xi$.
Furthermore, we recall the coordinate and derivative conventions from Definition \ref{def.domain}:
\[
( \sigma, y ) := ( \sigma, \bar{y}, s ) \text{,} \qquad D := ( \partial_\sigma, \nabla ) := ( \partial_\sigma, \bar{\nabla}, \partial_s ) \text{.}
\]
Also, to keep track of dependencies in our estimates, we define the following family of constants: 

\begin{definition} \label{def_K_n}
Fix an increasing sequence $(K_N)_{N\geq 0}$ in $\R^+$, with each $K_N$ depending on $N$ and
\begin{align}
\label{eq_K_n} \sup_\Omega \bigg[ \sup_{ k + l \leq N + 1 } | \sigma^{ \max ( 0, k - 1 ) } \partial_\sigma^k \nabla^l g |, \,\, \sup_{ k + l \leq N } | \sigma^k \partial_\sigma^k \nabla^l \xi |, \,\, \sup_{ k + l \leq N + 2 } | \sigma^{ \max ( 0, k - 2 ) } \partial_\sigma^k \nabla^l \varphi | \bigg] \text{.}
\end{align}
In addition, we fix universal constants $\alpha \gg \beta > 0$, with $\alpha$ taken to be sufficiently large.
\end{definition}

Notice that the quantities in \eqref{eq_K_n} are finite by virtue of $g$, $\varphi$, $\xi$ being in the spaces $\mc{B}^\infty_1$, $\mc{B}^\infty_2$, $\mc{B}^\infty_0$, respectively.
The precise values of the constants $\alpha$, $\beta$, and $( K_N )$ will be determined throughout the proof. 
In particular, we will adjust these values as needed at various points in our construction.

\subsection{Geometric optics approximations}

Throughout, we fix a sufficiently large $n_0 \gg 1$, whose precise value is determined later.
The first step of the proof is to construct a sequence $( v_n )_{ n \geq n_0 }$ of geometric optics approximations, with each $v_n$ propagating along null geodesics at $\sigma \sim n^{-1}$.

To be more precise, we first decompose $\Omega$ into discrete bands:

\begin{definition}
For each $n \geq n_0$, we define the region
\begin{equation}
\label{def_Omega_n} \Omega_n := \left\{ \tfrac{1}{n+1} + \tfrac{1}{8(n+1)^2} < \sigma < \tfrac{1}{n-1} - \tfrac{1}{8(n-1)^2} \right\} \cap \Omega \text{.}
\end{equation}
Furthermore, on each $\Omega_n$, we define the following rescaled derivatives:
\begin{equation}
\label{def_D_n} \partial_{\sigma, n} := n^{-2} \partial_\sigma \text{,} \qquad D_n := ( \partial_{\sigma, n}, \nabla ) \text{,} \qquad \bar{D}_n := ( \partial_{\sigma, n}, \bar{\nabla} ) \text{.}
\end{equation}
\end{definition}

Observe in particular that for any $n, m \geq n_0$:
\begin{itemize}
\item $\Omega_n$ is contained in the region $\sigma \sim n^{-1}$. 
\item $\Omega_n \cap \Omega_m = \emptyset$ whenever $\abs{m - n} > 1$.
\end{itemize}
Moreover, we fix a particular sequence of constants, which will roughly reflect the number of terms we will take in our geometric optics approximation on each band $\Omega_n$:

\begin{proposition} \label{estimate_K_N}
There exists a non-decreasing sequence $( I_n )_{n \geq n_0} \subset \N$ such that:
\begin{itemize}
\item $\lim_{n \rightarrow +\infty} I_n = +\infty$.

\item For any $n \geq n_0$, we have $K_N \leq n$ for any $N \leq I_n$. 
\end{itemize}
\end{proposition}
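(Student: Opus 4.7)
The plan is to construct $I_n$ by a direct inversion recipe that exploits the monotonicity of $(K_N)$. First, I would enlarge $n_0$, if necessary, so that $K_0 \leq n_0$; this guarantees that the integer $N = 0$ belongs to the selection set appearing below for every $n \geq n_0$. Then I would set
\[
I_n := \max \{ N \in \{ 0, 1, \dots, n \} : K_N \leq n \} \text{.}
\]
Since the set on the right is non-empty (it contains $0$, because $K_0 \leq n_0 \leq n$) and finite (it is a subset of $\{ 0, \dots, n \}$), the maximum is attained, so $I_n \in \N$ is well-defined.

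Next, I would verify the two required properties in turn. Monotonicity in $n$ is immediate: if $n \leq n'$, then every $N$ appearing in the set defining $I_n$ satisfies $N \leq n \leq n'$ and $K_N \leq n \leq n'$, so $N$ also appears in the set defining $I_{n'}$; taking the max yields $I_n \leq I_{n'}$. For the divergence $I_n \to +\infty$, I would fix an arbitrary $M \in \N$ and observe that as soon as $n \geq \max ( M, K_M )$, the integer $N = M$ belongs to the defining set, which forces $I_n \geq M$. Hence $I_n \to +\infty$ as $n \to +\infty$.

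The key bound $K_N \leq n$ for all $N \leq I_n$ then follows directly: by construction $K_{I_n} \leq n$, and the monotonicity of the sequence $( K_N )$ gives $K_N \leq K_{I_n} \leq n$ for every $N \leq I_n$.

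I do not foresee any real obstacle here; the only minor subtlety is the cap $N \leq n$ in the definition of $I_n$, which is needed to ensure $I_n$ is finite even in the degenerate case where $(K_N)$ happens to be bounded (so that the unconstrained maximum would be $+\infty$). Apart from this bookkeeping, the statement is just an inversion of the increasing sequence $(K_N)$, and its purpose is to record an integer quantity $I_n \nearrow \infty$ that will later control the number of terms one can take in the geometric optics expansion on each band $\Omega_n$.
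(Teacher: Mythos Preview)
Your proof is correct and follows essentially the same approach as the paper, which simply sets $I_n := \max\{N \mid K_N \leq n\}$ in one line. Your version is more careful: the cap $N \leq n$ you introduce guards against the (formally possible) degenerate case of bounded $(K_N)$, and you spell out the verifications of monotonicity, divergence, and the key bound that the paper leaves implicit.
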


\begin{proof}
As $( K_N )_{ N \geq 0 }$ is increasing, we can simply take $I_n := \max\{ N \mid K_N \leq n\}$ for any $n \geq n_0$. 
\end{proof}

Next, we define the following amplitudes for our geometric optics approximations on $\Omega_n$:

\begin{definition}
For each $n \geq n_0$, we define $f_n \in C^\infty ( \Omega_n )$ by
\begin{equation}
\label{f_n} f_n := -n^2 - n^4 \paren{\sigma - \tfrac{1}{n}} \cdot \theta \paren{n^2 (\sigma-\tfrac{1}{n}) } \text{,}
\end{equation}
where $\theta \in C^\infty ( \R )$ a smooth non-decreasing function satisfying
\begin{equation}
\label{def_theta} \theta(s) = \begin{cases} -\frac{1}{2} &\quad s \leq -\frac{1}{8} \text{,} \\ 1 &\quad s \geq \frac{1}{8} \text{.} \end{cases}
\end{equation}
\end{definition}

By direct computations, we have, for any $n \geq n_0$,
\begin{equation}
\label{estimate_f_n}
f_n(\sigma) \leq \begin{cases}
  -\frac{17}{16} n^2 &\quad \sigma - \frac{1}{n} \leq -\frac{1}{8n^2} \text{,} \\
  -\frac{7}{8} n^2 &\quad \abs{\sigma - \frac{1}{n}} \leq \frac{1}{8n^2} \text{,} \\
  -\frac{9}{8} n^2 &\quad \sigma - \frac{1}{n} \geq \frac{1}{8n^2} \text{,}
\end{cases}
\end{equation}
as well as the following bounds:
\begin{equation}
\label{estimate_derivative_f_n} \sup_{ \Omega_n } | \partial_{ \sigma, n } f_n | \leq K_0 n^2 \text{,} \qquad \sup_{ \Omega_n } | \partial_{ \sigma, n }^{N+2} f_n | \leq K_N n^2 \text{,} \quad N \geq 0 \text{.}
\end{equation}
Using the $f_n$'s, we then define the following operators:

\begin{definition}
Define the operator $T_1$ on $\Omega$ by
\begin{equation}
\label{def_T1} T_1 := \partial_s + \Box_g \varphi \text{.}
\end{equation}
In addition, for each $n \geq n_0$, we define the operator $T_{2, n}$ on $\Omega_n$ by
\begin{equation}
\label{def_T2} T_{2,n} := e^{-f_n} \mc{P} e^{f_n} \text{.}
\end{equation}
\end{definition}

Note in particular that the following identities hold for $n \geq n_0$:
\begin{align}
\label{T1_T2} in^{2\alpha}T_1 + T_{2,n} &= e^{-in^{2\alpha}\varphi} e^{-f_n} \mathcal{P} e^{f_n} e^{in^{2\alpha}\varphi} \text{,} \\
\notag T_{2, n} &= \Box_g + \frac{\xi}{\sigma^2} + 2 \partial_\sigma f_n \, \grad_g \sigma + g^{-1}(d\sigma, d\sigma) \big[ (\partial_\sigma f_n)^2 + \partial_\sigma^2 f_n \big] + \partial_\sigma f_n \, \Box_g \sigma \text{.}
\end{align}
The next step is to define the individual terms of our geometric optics approximations:

\begin{definition}
Fix a sequence $( \ul{\chi}_n )_{ n \geq n_0 }$ of smooth functions satisfying
\begin{equation}
\label{def_cutoff} \ul{\chi}_n: \R \rightarrow [0, 1] \text{,} \qquad \underline{\chi}_n (z) =
\begin{cases}
1 &\quad \frac{1}{n+1}+\frac{1}{6(n+1)^2} \leq z \leq \frac{1}{n-1} -\frac{1}{6(n-1)^2} \text{,} \\
0 &\quad z \leq \frac{1}{n+1}+\frac{1}{7(n+1)^2} \text{ or } z \geq \frac{1}{n-1}-\frac{1}{7(n-1)^2} \text{,}
\end{cases}
\end{equation}
and such that the following estimates hold:
\begin{equation}
\label{def_cutoff_est} \sup_{ z \in \R } | \ul{\chi}_n^{(N)} (z) | \leq K_N n^{2N} \text{,} \qquad N \geq 0 \text{.}
\end{equation}
Furthermore, for any $n \geq n_0$, we define the function
\begin{equation}
\label{def_chi} \chi_n \in C^\infty ( \Omega_n ) \text{,} \qquad \chi_n := \ul{\chi}_n ( \sigma ) \text{.}
\end{equation}
\end{definition}

Note that for $n$ large enough, the $\ul{\chi}_n$'s can be constructed as translations and dilations of a single function $\ul{\chi}: \R \rightarrow [0, 1]$.
The $\chi_n$'s will serve as initial profiles of our geometric optics terms.

\begin{definition}
For any $n \geq n_0$, we define the sequence $( c_{n, j} )_{ j \geq 0 }$ of functions on $\Omega_n$ to be the solutions to the following infinite system of transport equations:
\begin{align}
\label{def_c_0} T_1 c_{n,0} = 0 \text{,} &\qquad c_{n,0} |_{s=0} = \chi_n |_{s=0} = \ul{\chi}_n ( \sigma ) \text{,} \\
\label{def_c_j} i T_1 c_{n,j} + n^{-\alpha} T_{2, n} c_{n,j-1} = 0 \text{,} &\qquad c_{n,j} |_{s=0} = 0 \text{,} \quad j \geq 1 \text{.}
\end{align}
\end{definition}

In the following, we establish bounds for the $c_{n, j}$'s that capture the dependence on $n$ and $j$: 

\begin{proposition}
For any $n \geq n_0$, $j \geq 0$, and $N \geq 0$, the following estimates hold on $\Omega_n$:
\begin{align}
\label{estimate_c_0} K_0^{-1} \chi_n &\leq c_{n,0} \leq K_0 \chi_n \text{,} \\
\label{estimate_c_j} | D_n^N c_{n,j} | &\leq K_{N+2j} ( n^{8-\alpha}K_{N+2j} )^j | D^{\leq N+2j}_n \chi_n | \text{.}
\end{align}
\end{proposition}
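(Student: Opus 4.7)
\emph{Strategy.} I would prove \eqref{estimate_c_0} by direct integration of the transport equation \eqref{def_c_0}, and \eqref{estimate_c_j} by induction on $j$ resting on a uniform coefficient bound for $T_{2,n}$ in the rescaled frame $D_n$. The gauge condition in \eqref{conditions_s} makes $T_1 = \partial_s + \Box_g\varphi$ a pure transport operator along $\partial_s$, so the integrating-factor formula gives
\[
c_{n,0}(\sigma,\bar{y},s) \;=\; \ul{\chi}_n(\sigma)\,\exp\!\left(-\int_0^s (\Box_g\varphi)(\sigma,\bar{y},s')\,ds'\right).
\]
Because $g \in \mc{B}_1^\infty$ and $\varphi \in \mc{B}_2^\infty$, the quantity $\Box_g\varphi$ is bounded on $\Omega$ by a constant controlled by $K_1$, and $(s_-,s_+)$ is finite, so after enlarging $K_0$ the exponential lies in $[K_0^{-1},K_0]$, yielding \eqref{estimate_c_0}. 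Differentiating this representation in $\bar{D}_n$ and in $\partial_{\sigma,n}$ also gives the $j=0$ case of \eqref{estimate_c_j}.

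\emph{Coefficient bound and induction.} The key auxiliary estimate is that, for every $M \geq 0$ and every smooth $w$ on $\Omega_n$,
\[
| D_n^M ( T_{2,n} w ) | \;\leq\; K_{M+2}\, n^{8}\, | D_n^{\leq M+2} w | .
\]
This follows from the explicit formula for $T_{2,n}$ in \eqref{T1_T2}: after writing each $\partial_\sigma$ as $n^2\,\partial_{\sigma,n}$, the bounds \eqref{estimate_derivative_f_n} together with Definition \ref{def_K_n} show that the dominant coefficient is $g^{-1}(d\sigma,d\sigma)\,(\partial_\sigma f_n)^2 = O(n^8)$, while the second-order part $\Box_g$ scales only as $n^4$ in $D_n$, the potential $\xi\sigma^{-2}$ is $O(n^2)$ on $\Omega_n\subset\{\sigma\sim n^{-1}\}$, and the remaining $\partial_\sigma^2 f_n$, $\partial_\sigma f_n$, and drift terms all fall below $n^8$ after pairing with the corresponding $D_n$-derivatives of $w$. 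All combinatorial and metric constants that appear are absorbed into $K_{M+2}$. Granted this bound, the induction step for $j\geq 1$ uses the integral representation
\[
c_{n,j}(\sigma,\bar{y},s) \;=\; i\int_0^s e^{-\int_{s'}^s \Box_g\varphi\, ds''}\, n^{-\alpha}\,(T_{2,n} c_{n,j-1})(\sigma,\bar{y},s')\, ds' .
\]
Commuting $\bar{D}_n$ freely through the $s$-integral and handling each $\partial_s$ via the equation itself (replacing $\partial_s c_{n,j}$ by $-\Box_g\varphi\, c_{n,j} + i n^{-\alpha}\, T_{2,n} c_{n,j-1}$ and iterating), one gets
\[
| D_n^N c_{n,j} | \;\leq\; K_{N+2}\, n^{8-\alpha}\, | D_n^{\leq N+2} c_{n,j-1} | .
\]
Feeding in the inductive hypothesis and using monotonicity $K_{N+2}\leq K_{N+2j}$ reproduces the factor $K_{N+2j}(n^{8-\alpha}K_{N+2j})^j|D_n^{\leq N+2j}\chi_n|$ of \eqref{estimate_c_j}.

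\emph{Main obstacle.} The technical heart is bookkeeping: after Leibniz-expanding $D_n^N$ through $T_{2,n}$ and through the integrating factor, one accumulates products of bounds on derivatives of $g$, $\xi$, $\varphi$, $f_n$, and $\Box_g\varphi$, and these must all fit inside a single $K_{N+2j}$ factor and a single power $n^{8-\alpha}$ per level of the induction. This is legitimate because Definition \ref{def_K_n} only constrains $(K_N)$ to be an increasing sequence dominating the fixed suprema listed in \eqref{eq_K_n}, so at each stage one is free to enlarge $K_N$ to absorb any finite polynomial expression in $K_0,\dots,K_N$. The conceptual point worth singling out is why the amplification per level is exactly $n^{8-\alpha}$, rather than a higher power: in the $D_n$ frame the second-order part of $T_{2,n}$ scales only as $n^4$ and $\partial_\sigma^2 f_n\cdot g^{-1}(d\sigma,d\sigma)$ only as $n^6$, so it is the quadratic zeroth-order coefficient $(\partial_\sigma f_n)^2\cdot g^{-1}(d\sigma,d\sigma)$ that saturates the bound — an $n^8$ that will be tamed by the $n^{-\alpha}$ from \eqref{def_c_j} once $\alpha$ is taken large in Definition \ref{def_K_n}.
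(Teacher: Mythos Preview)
Your proposal is correct and follows essentially the same route as the paper. The only stylistic difference is that where the paper separates $D_n^N$ into $\bar{D}_n^m\partial_s^\ell$ and runs a nested induction with Gr\"onwall's inequality at each step, you instead write down the explicit integrating-factor/Duhamel formula and differentiate it; these are two equivalent ways of organising the same linear transport estimate, and your identification of the $n^8$ coming from $g^{-1}(d\sigma,d\sigma)(\partial_\sigma f_n)^2$ matches the paper's bound \eqref{estimate_T_2} exactly.
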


\begin{proof}
Let us first observe that $T_{2,n}$ obeys the following estimate, for any $h \in C^\infty (\Omega_n)$: 
\begin{equation}
\label{estimate_T_2} | D^M_n T_{2,n} h | \leq K_M n^8 | D^{\leq M+2}_n h | \text{.}
\end{equation}
This follows from \eqref{T1_T2}, the estimates \eqref{estimate_derivative_f_n}, and our assumptions for the asympotics of $g$, $\xi$, and $\varphi$ (see Definition \ref{def_K_n}).
We now separate the proof into steps:

\vspace{\medskipamount}
\noindent
\textit{1. Estimate on $c_{n,0}$.}
In this step, we prove the following, for all $m, \ell \geq 0$: 
\begin{equation}
\label{estimate_c_0_1} \abs{\bar{D}_n^{\leq m} \partial_s^{\leq \ell} c_{n,0}} \leq K_{m+\ell} \abs{\bar{D}^{\leq m}_n \chi_n}.
\end{equation}

For this, we perform an induction on $\ell$---and for each $\ell$, another induction on $m$.
The base case, $\ell=0$, $m=0$ is proved by integrating \eqref{def_c_0} (noting \eqref{def_T1}) and applying the Gr\"onwall estimate:
\[
K_0^{-1} \chi_n \leq c_{n,0} \leq K_0 \chi_n \text{.}
\]
In particular, this proves the estimate \eqref{estimate_c_0}.

We now fix $m>0$ and assume \eqref{estimate_c_0_1} holds for $\ell=0$, and with $m$ replaced by $m-1$.
Differentiating \eqref{def_c_0} $m$ times with respect to $\bar{D}_n$ and using Gr\"onwall, we obtain
\begin{align*}
\abs{\bar{D}^m_n c_{n,0}} &\leq K_m \abs{\bar{D}^m_n \chi_n} + K_m \abs{\bar{D}^{\leq m-1}_n c_{n,0}} \\
&\leq K_m \abs{\bar{D}^{\leq m}_n \chi_n} \text{,}
\end{align*}
where we adjust $K_m$ between each step, if necessary, and where we used the induction hypothesis in the second line.
This proves \eqref{estimate_c_0_1} for the base case $\ell = 0$.

We can now fix $\ell > 0$ and assume \eqref{estimate_c_0_1} holds for all $m \geq 0$, and with $\ell$ replaced by $\ell-1$.
By applying $\partial_s^{\ell-1}$ and $\bar{D}_n^m$ to \eqref{def_c_0}, we then obtain
\begin{align*}
\abs{\bar{D}^m_n \partial_s^\ell c_{n,0}} &\leq K_{m+\ell} \abs{\bar{D}^{\leq m}_n \partial_s^{<\ell} c_{n,0}} \\
&\leq K_{m+\ell} \abs{\bar{D}^{\leq m}_n \chi_n} \text{,}
\end{align*}
where we used the induction hypothesis in the last line.
The full estimate \eqref{estimate_c_0_1} now follows.

\vspace{\medskipamount}
\noindent
\textit{2. Estimate on $c_{n,j}$ in terms of $c_{n,j-1}$.}
Next, we claim the following bound:
\begin{equation}
\label{intermediate_estimate} \abs{\bar{D}^{\leq m}_n \partial_s^{\leq\ell} c_{n,j}} \leq K_{m+\ell} n^{8-\alpha} \abs{D^{\leq m + \ell + 2}_n c_{n,j-1}} \text{,} \qquad m,\ell,j \geq 0 \text{.}
\end{equation}

To show this, we apply the same nested induction as before.
Let us first fix $j \geq 1$.
The base case $m=0$, $\ell=0$ of \eqref{intermediate_estimate} follows by integrating \eqref{def_c_j} and applying the Gr\"onwall estimate: 
\begin{align*}
\abs{c_{n,j}} &\leq K_0 n^{-\alpha} \abs{T_{2,n}c_{n,j-1}} \\
&\leq K_0 n^{8-\alpha} \abs{D_n^{\leq 2} c_{n,j-1}} \text{,}
\end{align*}
where we applied \eqref{estimate_T_2} with $M=0$.

Next, fix $m > 0$, and assume \eqref{intermediate_estimate} holds for $\ell=0$, and $m$ replaced by $m-1$.
Applying $\bar{D}_n^m$ to \eqref{def_c_j}, integrating in $s$, and then applying the Gr\"onwall estimate yields:
\begin{align*}
\abs{\bar{D}^m_n c_{n,j}} &\leq K_m \abs{\bar{D}^{\leq m-1}_n c_{n,j}} + K_m n^{-\alpha} \abs{\bar{D}^m_n T_{2,n} c_{n,j-1}} \\
&\leq K_m \abs{\bar{D}^{\leq m-1}_n c_{n,j}} + K_m n^{8-\alpha} \abs{D^{\leq m+2}_n c_{n,j-1}} \\
&\leq K_m n^{8-\alpha} \abs{D^{\leq m+2}_n c_{n,j-1}} \text{,}
\end{align*}
where we used \eqref{estimate_T_2} and the induction hypothesis.
This yields \eqref{intermediate_estimate} when $\ell = 0$.

We now perform the induction on $\ell$ by fixing $\ell>0$ and assuming \eqref{intermediate_estimate} to hold for any $m \geq 0$ and $\ell$ replaced by $\ell-1$.
Applying $\bar{D}_n^{\leq m} \partial_s^{\ell-1}$ to \eqref{def_c_j} then yields
\begin{align*}
\abs{\bar{D}^m_n \partial_s^\ell c_{n,j}} &\leq K_{m+\ell} \abs{ \bar{D}^{\leq m}_n \partial^{\leq \ell-1}_s c_{n,j}} + n^{-\alpha} \abs{\bar{D}^m_n \partial_s^{\ell-1} T_{2,n} c_{n,j-1}} \\
&\leq K_{m+\ell} \abs{\bar{D}^{\leq m}_n \partial^{\leq \ell-1}_s c_{n,j}} + K_{m+\ell} n^{8-\alpha} \abs{D^{\leq m+\ell+2}_n c_{n,j-1}} \\
&\leq K_{m+\ell} n^{8-\alpha} \abs{D^{\leq m+\ell+2}_n c_{n,j-1}} \text{,}
\end{align*}
where we again used \eqref{estimate_T_2} and the induction hypothesis.
This completes the proof of \eqref{intermediate_estimate}.

\vspace{\medskipamount}
\noindent
\textit{3. Induction on $j$.}
Finally, we derive \eqref{estimate_c_j} via an induction on $j$.
Note first the base case $j = 0$ is an immediate consequence of \eqref{estimate_c_0_1}.
Thus, for the remaining inductive case, we now fix $j > 1$, and we assume \eqref{estimate_c_j} holds with $j$ replaced by $j-1$.
Applying \eqref{intermediate_estimate}, one then has, for $N \geq 0$, 
\begin{align*}
\abs{D^N_n c_{n,j}} &\leq K_N n^{8-\alpha} \abs{D^{\leq N+2}_n c_{n,j-1}} \\
&\leq K_N n^{8-\alpha} K_{N+2+2(j-1)} \paren{ K_{N+2+2(j-1)} n^{8-\alpha} }^{j-1} \abs{ D_n^{\leq N+2+2(j-1)} \chi_n} \\
&\leq K_{N+2j} \paren{K_{N+2j} n^{8-\alpha}}^j \abs{ D_n^{N+2j} \chi_n } \text{,}
\end{align*}
where we used the induction hypothesis and adjusted the constants $K_{N+2j}$ as needed.
\end{proof}

In particular, adjusting the constants $\alpha$, $\beta$, $( K_N )_{ N \geq 0 }$ as needed, \eqref{estimate_c_j} implies:

\begin{corollary}
For any $n \geq n_0$, $N \geq 0$, and $j \geq 0$ such that $N + 2j \leq I_n$ (see Proposition \ref{estimate_K_N}),
\begin{equation}
\label{beta} \abs{D^N_n c_{n,j}} \leq n^\beta n^{j(\beta-\alpha)} \abs{D^{\leq N+2j}_n \chi_n} \text{.}
\end{equation}
\end{corollary}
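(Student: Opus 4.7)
The plan is to treat this as a direct bookkeeping consequence of \eqref{estimate_c_j} combined with Proposition \ref{estimate_K_N}, with the flexibility in the constants $\alpha, \beta, (K_N)$ absorbing any power-counting discrepancies. Since the inequality \eqref{estimate_c_j} has already been proven, nothing new needs to be done at the level of the transport analysis; the entire task is to convert the $K$-factors into powers of $n$.

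First I would invoke the hypothesis $N + 2j \leq I_n$, which by Proposition \ref{estimate_K_N} immediately yields $K_{N+2j} \leq n$. Substituting this into \eqref{estimate_c_j} gives
\begin{equation*}
|D_n^N c_{n,j}| \;\leq\; n \cdot (n \cdot n^{8-\alpha})^j\, |D_n^{\leq N+2j} \chi_n| \;=\; n^{1 + j(9-\alpha)}\, |D_n^{\leq N+2j} \chi_n|.
\end{equation*}
To match the claimed bound $n^\beta n^{j(\beta-\alpha)} |D_n^{\leq N+2j}\chi_n|$, I would then compare exponents and require $1 + j(9-\alpha) \leq \beta + j(\beta - \alpha)$ for all $j \geq 0$, which rearranges to $1 - \beta \leq j(\beta - 9)$. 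Choosing $\beta$ large enough (any $\beta \geq 9$ works; this is the moment at which the freedom to readjust $\beta$ stated before the corollary is used) makes both sides manageable: the left side becomes negative, and the right side is nonnegative for every $j \geq 0$. Once $\beta$ is fixed, the convention $\alpha \gg \beta$ can be enforced by taking $\alpha$ correspondingly larger, which is consistent with the standing freedom described in Definition \ref{def_K_n}.

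There is no real obstacle here — the bound \eqref{beta} is just \eqref{estimate_c_j} rewritten once the factor $K_{N+2j}$ is polynomially controlled in $n$. The only subtlety to watch is that the readjustment of $\beta$ (and hence $\alpha$) must be done once and for all, independently of $n, N, j$; since the required lower bound on $\beta$ is a fixed numerical constant (dictated by the exponent $8$ coming from the estimate \eqref{estimate_T_2} on $T_{2,n}$), this causes no issue. After this choice, the corollary follows immediately, and the uniform factor $n^\beta$ together with the decaying factor $n^{j(\beta-\alpha)}$ (since $\alpha > \beta$) provides the summable-in-$j$ control that will be needed in the subsequent construction of the beams $v_n$.
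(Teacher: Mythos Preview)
Your proposal is correct and matches the paper's approach exactly: the paper simply states that \eqref{beta} follows from \eqref{estimate_c_j} ``adjusting the constants $\alpha$, $\beta$, $(K_N)_{N\geq 0}$ as needed,'' and you have spelled out precisely that adjustment. The only content is the bookkeeping you carried out---replacing $K_{N+2j}$ by $n$ via Proposition~\ref{estimate_K_N} and then choosing $\beta \geq 9$---which is just what the paper leaves implicit.
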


\begin{remark}
In particular, Proposition \ref{estimate_K_N} allows us to remove constants of the form $K_N$, with the price of adding a factor $n^\beta$, as long as $\beta$ is sufficiently large.
This is a trick that we will use multiple times throughout this section, as a matter of convenience, to remove $K_N$'s from inequalities, and we will do so in the following development without further mention.
\end{remark}

We can now define the preliminary family of geometric optics approximation bands:

\begin{definition}
For any $n \geq n_0$, we define the following functions on $\Omega_n$: 
\begin{align}
\label{c_star} c_{n,\star} &:= \sum_{j=1}^{I_n^\star} n^{-j\alpha} c_{n,j} \text{,} \qquad I_n^\star := \Big\lfloor \tfrac{I_n}{3} \Big\rfloor \text{,} \\
\label{v_n} v_n &:= e^{in^{2\alpha}\varphi}e^{f_n} \paren{c_{n,0}+c_{n,\star}} \text{.}
\end{align}
\end{definition}

\begin{proposition}
The following holds for any $n \geq n_0$ and $N \leq I_n^\star$:
\begin{align}
\label{estimate_c_star} \abs{D^{\leq N}_n c_{n,\star}} &\leq K_{N+2I_n^\star} n^{\beta-\alpha} n^{-\alpha} \abs{D^{\leq N + 2I_n^\star}_n \chi_n} \\
\notag &\leq n^{\beta - 2 \alpha} \abs{D_n^{\leq I_n} \chi_n} \text{.}
\end{align}
Furthermore, the following holds for all $N\leq I_n^\star-2$:
\begin{align}
\label{prop_estimate_sum} \abs{D^N_n\left[\paren{in^{2\alpha} T_1 + T_{2,n}}(c_{n,0}+c_{n,\star})\right]} &\leq n^\beta n^{2(\beta-\alpha)I_n^\star} \abs{D_n^{\leq N+2+2I_n^\star} \chi_n} \\
\notag &\leq n^\beta n^{2(\beta-\alpha)I_n^\star} \abs{D_n^{\leq I_n} \chi_n} \text{.}
\end{align}
\end{proposition}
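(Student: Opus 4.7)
For the first estimate \eqref{estimate_c_star}, I plan to apply $D_n^{\leq N}$ termwise to the finite sum $c_{n,\star} = \sum_{j=1}^{I_n^\star} n^{-j\alpha} c_{n,j}$ and invoke \eqref{beta} on each summand. The hypothesis $N \leq I_n^\star$ together with $j \leq I_n^\star$ guarantees $N + 2j \leq 3I_n^\star \leq I_n$, so \eqref{beta} applies and each summand contributes a factor of order $n^{-j\alpha} \cdot n^\beta n^{j(\beta-\alpha)} = n^\beta \cdot n^{j(\beta - 2\alpha)}$. Since $\alpha \gg \beta$, this is a geometric series in $n^{\beta-2\alpha}$ dominated by twice its $j=1$ term, yielding the intermediate bound involving $K_{N+2I_n^\star} n^{\beta-\alpha} n^{-\alpha}$. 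Absorbing the $K_{N+2I_n^\star}$ factor into a single additional power of $n^\beta$ via Proposition \ref{estimate_K_N}, as described in the Remark following \eqref{beta}, and using the monotonicity $|D_n^{\leq N+2I_n^\star}\chi_n| \leq |D_n^{\leq I_n}\chi_n|$, then produces the final bound $n^{\beta-2\alpha}|D_n^{\leq I_n}\chi_n|$.

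The second estimate \eqref{prop_estimate_sum} hinges on the telescoping identity
\begin{equation*}
(in^{2\alpha} T_1 + T_{2,n})(c_{n,0} + c_{n,\star}) = n^{-I_n^\star \alpha} T_{2,n} c_{n, I_n^\star} \text{,}
\end{equation*}
which I plan to extract directly from the transport hierarchy \eqref{def_c_0}--\eqref{def_c_j}. Indeed, \eqref{def_c_0} gives $T_1 c_{n,0} = 0$; rewriting \eqref{def_c_j} as $in^{2\alpha} T_1 c_{n,j} = -n^\alpha T_{2,n} c_{n,j-1}$ for $j \geq 1$ and substituting into the left-hand side produces two sums whose indices can be shifted to align, cancelling all terms except the single residual at $j = I_n^\star$. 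Conceptually, this identity is the whole point of the geometric-optics construction: by design, the hierarchy \eqref{def_c_j} is engineered so that each order $j$ precisely absorbs the error produced at order $j-1$, and only the top-order remainder survives.

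Given the identity, \eqref{prop_estimate_sum} reduces to controlling $|D_n^N T_{2,n} c_{n, I_n^\star}|$. Applying \eqref{estimate_T_2} costs $K_N n^8$ and two additional derivatives, while \eqref{beta} (valid since $N \leq I_n^\star - 2$ ensures $N+2+2I_n^\star \leq 3I_n^\star \leq I_n$) bounds $|D_n^{\leq N+2}c_{n,I_n^\star}|$ by $n^\beta n^{I_n^\star(\beta-\alpha)}|D_n^{\leq N+2+2I_n^\star}\chi_n|$. Combined with the overall prefactor $n^{-I_n^\star \alpha}$, the total $n$-exponent is $\beta + 8 + I_n^\star(\beta - 2\alpha)$, which is dominated by the target $\beta + 2I_n^\star(\beta - \alpha)$ provided $\beta I_n^\star \geq 8$---hence for $n$ sufficiently large, absorbing the $K_N$ again via Proposition \ref{estimate_K_N}. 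The main obstacle in the proof is the telescoping identity itself: the exponent bookkeeping that follows is routine, but getting the signs and prefactors of the reindexing exactly right is where care is needed.
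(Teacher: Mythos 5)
Your proposal is correct and follows essentially the same route as the paper's own proof: termwise application of \eqref{beta} plus a geometric series argument for \eqref{estimate_c_star}, and the telescoping cancellation arising from the transport hierarchy \eqref{def_c_0}--\eqref{def_c_j} followed by \eqref{estimate_T_2} and \eqref{beta} for \eqref{prop_estimate_sum}. You correctly identify the telescoping identity $(in^{2\alpha}T_1 + T_{2,n})(c_{n,0}+c_{n,\star}) = n^{-I_n^\star\alpha} T_{2,n} c_{n,I_n^\star}$ as the crux, get the signs and index shift right, and verify the derivative-count constraint $N+2+2I_n^\star \leq 3I_n^\star \leq I_n$ so that \eqref{beta} is applicable; the residual exponent bookkeeping and absorption of $K_N$-factors via Proposition \ref{estimate_K_N} (at the cost of powers of $n^\beta$, with $I_n^\star \to \infty$ ensuring $\beta I_n^\star$ eventually dominates the fixed $+8$) matches the paper's argument modulo cosmetic differences in how the intermediate bound is written.
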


\begin{proof}
The first part of \eqref{estimate_c_star} follows by applying \eqref{beta} to each term of the summation in \eqref{c_star}.
The second part of \eqref{estimate_c_star} then follows by adjusting the constants $\alpha$, $\beta$, $( K_N )_{ N \geq 0 }$ as needed and recalling Proposition \ref{estimate_K_N}.
(Note in particular that $N + 2 I_n^\star \leq I_n$.)

Next, for \eqref{prop_estimate_sum}, we first expand $c_{n,\star}$ using \eqref{c_star}, and we note that
\begin{align*}
\paren{ in^{2\alpha} T_1 + T_{2,n} }(c_{n,0} + c_{n,\star}) &= n^{2\alpha} i T_1 c_{n,0} + \sum_{ j = 1 }^{ I_n^\star } n^{(2-j) \alpha} \paren{i T_1 c_{n,j} + n^{-\alpha} T_{2,n} c_{n,j-1}} \\
&\qquad + n^{-I_n^\star \alpha} T_{2,n} c_{n,I_n^\star} \\
&= n^{-I_n^\star \alpha} T_{2,n} c_{n,I_n^\star} \text{,}
\end{align*}
where we used the transport equations \eqref{def_c_0}, \eqref{def_c_j} in the last step.
Differentiating the above and recalling \eqref{estimate_c_j} and \eqref{estimate_T_2}, we then obtain the bound
\begin{align*}
\abs{D^N_n \paren{in^{2\alpha}T_1 + T_{2,n}} \paren{c_{n,0}+c_{n,\star}} } &= n^{-I_n^\star\alpha} \abs{D^N_n T_{2,n} c_{n,I_n^\star}} \\
&\leq n^{-I_n^\star \alpha + \beta} \abs{D^{\leq {N+2}}_n c_{n,I_n^\star}} \\
&\leq n^{-I_n^\star \alpha + \beta} n^{I_n^\star(\beta - \alpha)} \abs{D_n^{\leq N + 2 + 2I_n^\star} \chi_n} \text{,}
\end{align*}
from which the first part of \eqref{prop_estimate_sum} follows.
(As before, we use powers $n^\beta$ to absorb various constants $K_M$.)
The remaining inequality in \eqref{prop_estimate_sum} now follows immediately, since $N + 2 + 2 I_n^\star \leq I_n$.
\end{proof}

\subsection{Modification of bands}

The desired counterexample $u$ will be constructed as a sum of all the geometric optics bands $v_n$.
However, the key problem is that the corresponding potential ($a$ in Theorem \ref{Theorem}) could become singular where $u$ vanishes; in particular, this could occur wherever $| v_n |$ and $| v_{n+1} |$ are the same.
In this subsection, we modify each $v_n$ such that the modified bands $\tilde{v}_n$, $n \geq n_0$, avoid the above-mentioned issue---that is, they satisfy the following:
\begin{itemize}
\item Each $\tilde{v}_n$ satisfies roughly the same estimates as $v_n$.

\item Both $\mc{P} \tilde{v}_n$ and $\mc{P} \tilde{v}_{n+1}$ vanish when $| \tilde{v}_n | = | \tilde{v}_{n+1} |$.
\end{itemize}

\begin{definition}
For any $n \geq n_0$, we define the set
\begin{equation}
\label{def_S_n} S_n := \{ x \in \Omega_n \cap \Omega_{n+1} \mid \abs{v_n (x)} = \abs{v_{n+1} (x)} \} \text{.}
\end{equation}
\end{definition}

\begin{proposition}\label{interference_prop}
If $n \geq n_0$, then $S_n$ is a smooth graph in $\Omega_n \cap \Omega_{n+1}$ of the form
\begin{equation}
\label{graph_S_n} S_n = \lbrace (\sigma, y) \in \Omega_n \cap \Omega_{n+1} \mid y \in \mc{I} \text{, } \sigma = \mf{s}_n (y) \rbrace \text{,}
\end{equation}
where $\mf{s}_n \in C^\infty ( \mc{I} )$ is of the form,
\begin{equation}
\label{frak_s_n} \mf{s}_n (y) = n^{-1} - \tfrac{2}{3} n^{-2} + C_n^1 n^{-3} + \tilde{\mf{s}}_n (y) \text{,} \qquad \abs{C^1_n} \leq K_0 \text{,}
\end{equation}
and where $\tilde{\mf{s}}_n \in C^\infty ( \mc{I} )$ obeys the following estimates:
\begin{equation}
\label{tilde_frak_s_n} \abs{ \nabla^{\leq N} \tilde{\mf{s}}_n } \leq n^{\beta - 2 \alpha} \text{,} \qquad N \leq I_n^\star \text{.}
\end{equation}
\end{proposition}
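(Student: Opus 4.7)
The plan is to reduce the defining condition $|v_n|=|v_{n+1}|$ to a nearly $y$-independent equation in $\sigma$ via a cancellation between $c_{n,0}$ and $c_{n+1,0}$, and then to invoke the implicit function theorem. I will write the condition on $S_n$ as $G(\sigma,y) = 0$, where
\[
G(\sigma,y) := (f_n - f_{n+1})(\sigma) - \log\frac{|c_{n+1,0}+c_{n+1,\star}|(\sigma,y)}{|c_{n,0}+c_{n,\star}|(\sigma,y)}.
\]
The crucial observation is that on the overlap strip $R := \{\sigma \mid \ul{\chi}_n(\sigma) = \ul{\chi}_{n+1}(\sigma) = 1\}$, both $c_{n,0}$ and $c_{n+1,0}$ solve the transport ODE $T_1 c = 0$ with the common initial data $c|_{s=0} = 1$, hence coincide by uniqueness for linear first-order ODEs. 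Inspecting \eqref{def_cutoff} shows that for $n_0$ large, $R$ is approximately the interval $[n^{-1}-\tfrac{5}{6}n^{-2},\, n^{-1}-\tfrac{1}{6}n^{-2}]$, which comfortably contains the expected location $n^{-1}-\tfrac{2}{3}n^{-2}$ of $\mathfrak{s}_n$. Using \eqref{estimate_c_0} (which also yields $c_{n,0}, c_{n+1,0}$ positive real-valued and bounded below by a constant on $R$) together with \eqref{estimate_c_star}, the logarithm in $G$ and all its $y$-derivatives are $O(n^{\beta-2\alpha})$ on $R$.

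Next I will compute $f_n - f_{n+1}$ explicitly on $R$. Since $\sigma \leq n^{-1} - \tfrac{1}{8}n^{-2}$ and $\sigma \geq (n+1)^{-1} + \tfrac{1}{8}(n+1)^{-2}$ throughout $R$, the cutoff $\theta$ in \eqref{f_n} attains its constant values $-\tfrac{1}{2}$ and $1$, yielding
\[
f_n(\sigma) = -n^2 + \tfrac{n^4}{2}(\sigma - n^{-1}),\qquad f_{n+1}(\sigma) = -(n+1)^2 - (n+1)^4(\sigma - (n+1)^{-1}).
\]
Substituting $\sigma = n^{-1} - tn^{-2}$ and expanding gives $(f_n - f_{n+1})(\sigma) = (1 - \tfrac{3t}{2})n^2 + (5-4t)n + O(1)$, and $\partial_\sigma(f_n - f_{n+1}) = \tfrac{n^4}{2} + (n+1)^4 \sim \tfrac{3n^4}{2}$. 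This $\sigma$-derivative dominates the $O(n^{2+\beta-2\alpha})$ contribution from the logarithm, so $G$ is strictly monotone in $\sigma$ on $R$ for $n_0$ large.

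Solving $f_n(\sigma) = f_{n+1}(\sigma)$ by matching coefficients in powers of $n^{-1}$ identifies the unique root $\sigma_n^0 = n^{-1} - \tfrac{2}{3}n^{-2} + C_n^1 n^{-3} + O(n^{-4})$ with $C_n^1 = -\tfrac{14}{9} + O(n^{-1})$ uniformly bounded. The implicit function theorem applied to $G = 0$ near $\sigma_n^0$ then produces a unique smooth $\mathfrak{s}_n \in C^\infty(\mc{I})$ with $S_n = \{\sigma = \mathfrak{s}_n(y)\}$. Setting $\tilde{\mathfrak{s}}_n(y) := \mathfrak{s}_n(y) - (n^{-1} - \tfrac{2}{3}n^{-2} + C_n^1 n^{-3})$, the bounds \eqref{tilde_frak_s_n} follow by differentiating the implicit relation $G(\mathfrak{s}_n(y), y) = 0$ repeatedly in $y$ and using $\partial_\sigma G \sim n^4$ together with $|\nabla^{\leq N} G| \lesssim n^{\beta-2\alpha}$; the resulting estimates are in fact $O(n^{\beta-2\alpha-4})$ for $N \geq 1$, and $O(n^{-4})$ at $N=0$, both comfortably bounded by $n^{\beta-2\alpha}$ once $\alpha$ is large enough that $2\alpha - \beta \geq 4$.

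The main obstacle---and the reason the statement is delicate---is the cancellation $c_{n,0} \equiv c_{n+1,0}$ on $R$: without it, the $y$-dependence of $G$ would be of order $1$ (from $\nabla \log(c_{n+1,0}/c_{n,0})$), giving only $|\nabla \tilde{\mathfrak{s}}_n| = O(n^{-4})$, which is insufficient for the downstream construction. The remaining technical work lies in tracking the constants $K_N$ generated by repeated differentiation of the implicit relation, which, as elsewhere in this section, is handled by absorbing them into an additional factor of $n^\beta$ via Proposition \ref{estimate_K_N}.
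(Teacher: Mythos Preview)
Your overall strategy matches the paper's: write $S_n$ as the zero set of $\Phi_n := \log|v_n/v_{n+1}|$, compute $f_n - f_{n+1}$ explicitly on the overlap, exploit monotonicity in $\sigma$, and then differentiate the implicit relation to estimate $\tilde{\mf{s}}_n$. Your explicit observation that $c_{n,0} \equiv c_{n+1,0}$ on $R$ (same transport equation, same data $\ul{\chi}_n = \ul{\chi}_{n+1} = 1$) is correct and is in fact what the paper tacitly uses when it asserts $|D_n^{\leq N} h_n| \lesssim n^{\beta-2\alpha}$ from \eqref{estimate_c_0}--\eqref{estimate_c_star}; without this cancellation those citations would only yield $|h_n| \leq K_0$. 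So this part of your write-up is an improvement in clarity.

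There are, however, two genuine issues. First, you only establish monotonicity of $G$ on $R$ and then invoke the implicit function theorem near $\sigma_n^0$; this does not by itself show that $S_n$ has no components in $(\Omega_n \cap \Omega_{n+1}) \setminus R$, where one of $\chi_n, \chi_{n+1}$ is strictly below $1$. The paper handles this by checking directly that $\Phi_n < 0$ on $\{\sigma \leq (n+1)^{-1} + \tfrac{1}{6}(n+1)^{-2}\}$ and $\Phi_n > 0$ on $\{\sigma \geq n^{-1} - \tfrac{1}{6}n^{-2}\}$, using one-sided bounds on the log terms; you need to add this step.

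Second, your $N=0$ estimate is off. With your choice of $C_n^1$ as only the $n^{-3}$ coefficient of the root of $f_n = f_{n+1}$, you leave a $y$-independent $O(n^{-4})$ residual in $\tilde{\mf{s}}_n$. But $n^{-4} \leq n^{\beta - 2\alpha}$ requires $2\alpha - \beta \leq 4$, the \emph{opposite} of your stated condition ``$2\alpha - \beta \geq 4$'', and it is incompatible with $\alpha \gg \beta$. The fix is the paper's: define $C_n^1$ so that $n^{-1} - \tfrac{2}{3}n^{-2} + C_n^1 n^{-3}$ equals the \emph{exact} root of $f_n - f_{n+1} = 0$ (equivalently, write the identity $B_n^{-1}\Phi_n = \sigma - (\text{constant}) + n^{-4} h_n$ exactly). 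Then $\tilde{\mf{s}}_n = -n^{-4} h_n(\mf{s}_n(y),y)$ carries only the log contribution, and with your cancellation this gives $|\tilde{\mf{s}}_n| = O(n^{\beta - 2\alpha - 4}) \leq n^{\beta - 2\alpha}$ at all orders $N \leq I_n^\star$.
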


\begin{proof}
We separate the proof into two steps:

\vspace{\medskipamount}
\noindent
\textit{1. Existence and uniqueness of $\mf{s}_n$.}
Consider the function
\begin{equation}
\label{Phi_n} \Phi_n := \log \abs{\frac{v_n}{v_{n+1}}} = f_n - f_{n+1} + \log \abs{\frac{c_{n,0} + c_{n,\star}}{c_{n+1,0} + c_{n+1,\star}}} \text{,}
\end{equation}
so that $S_n$ is simply the level set $\{ \Phi_n = 0 \}$.
Noting that (for large enough $n_0$)
\[
(n+1)^{-1} + \tfrac{1}{8} (n+1)^{-2} \leq \sigma \leq n^{-1} - \tfrac{1}{8} n^{-2}
\]
on $\Omega_n \cap \Omega_{n+1}$, and expanding \eqref{f_n}, we have
\begin{align*}
\Phi_n &= -n^2 + \tfrac{1}{2} n^4 \paren{\sigma - n^{-1}} + (n+1)^2 + (n+1)^4 \paren{\sigma - (n+1)^{-1}} \\ 
&\qquad + \log \abs{c_{n,0} + c_{n,\star}} - \log \abs{c_{n+1,0}+c_{n+1,\star}} \text{,}
\end{align*}
which can be rewritten as
\begin{align}
\label{form_phi_n} B_n^{-1} \Phi_n &= \sigma - \paren{n^{-1} - \tfrac{2}{3}n^{-2} + C^1_n n^{-3}} + C^2_n n^{-4} \log \abs{c_{n,0}+c_{n,\star}} \\
\notag &\qquad -C ^2_n n^{-4} \log \abs{c_{n+1,0}+c_{n+1,\star}} \text{,}
\end{align}
with constants $| C_n^1 | \leq K_0$, $0 \leq C_n^2 \leq K_0$, and $B_n := \frac{3}{2}n^4 + 4n^3 + 6n^2 + 4n + 1$.
We will show here that $\lbrace \Phi_n = 0 \rbrace$ will correspond to a smooth graph $\sigma = \mf{s}_n$ lying in the region $\chi_n = \chi_{n+1} = 1$.  

First, consider the case $\sigma \leq (n+1)^{-1} + \frac{1}{6}(n+1)^{-2}$.
Since $\chi_n \leq 1$ and $\chi_{n+1} = 1$ in this case (see \eqref{def_chi}), we can bound, using \eqref{estimate_c_0} and \eqref{estimate_c_star},
\begin{align*}
-\log \abs{c_{n+1,0}+c_{n+1,\star}} &\leq n^{\beta - 2\alpha} \text{,} \\
\log \abs{c_{n,0}+c_{n,\star}} &= \log \abs{c_{n,0}} + \log \abs{1+\frac{c_{n,\star}}{c_{n,0}}} \\
&\leq K_0 + n^{\beta - 2\alpha} \text{.}
\end{align*}
(Note since $\chi_n < 1$, we only obtain an upper bound for $\log | c_{n,0} + c_{n,\star} |$.)
Thus, by \eqref{form_phi_n}, we have
\begin{align*}
B_n^{-1} \Phi_n &\leq \paren{(n+1)^{-1} + \tfrac{1}{6}(n+1)^{-2}} - \paren{n^{-1} - \tfrac{2}{3}n^{-2} + C_n^1 n^{-3}} + C^2_n n^{-4} ( K_0 + n^{\beta - 2 \alpha} ) \\
&\leq - \tfrac{1}{12} (n+1)^{-2} \text{,}
\end{align*}
as long as $n_0$ is sufficiently large.
In particular, $\Phi_n < 0$ when $\sigma \leq (n+1)^{-1} + \frac{1}{6}(n+1)^{-2}$.

Similarly, if $\sigma \geq n^{-1}-\frac{1}{6}n^{-2}$, then $\chi_n = 1$ and $\chi_{n+1} \leq 1$, and \eqref{estimate_c_0} and \eqref{estimate_c_star} yield
\begin{align*}
\log \abs{c_{n,0}+c_{n,\star}} &\geq - n^{\beta - 2\alpha} \text{,} \\
- \log \abs{c_{n+1,0}+c_{n+1,\star}} &= - \log \abs{c_{n+1,0}} - \log \abs{1+\frac{c_{n+1,\star}}{c_{n+1,0}}} \\
&\geq - K_0 - n^{\beta - 2\alpha} \text{.}
\end{align*}
(Note we now only have a lower bound for $- \log \abs{c_{n+1,0}+c_{n+1,\star}}$.)
Thus, by \eqref{form_phi_n},
\begin{align*}
B_n^{-1} \Phi_n &\geq \paren{n^{-1} - \tfrac{1}{6} n^{-2}} - \paren{n^{-1} - \tfrac{2}{3}n^{-2} + C_n^1 n^{-3}} - C^2_n n^{-4} ( K_0 + n^{\beta - 2 \alpha} ) \\
&\geq \tfrac{1}{12} n^{-2} \text{,}
\end{align*}
if $n_0$ is sufficiently large.
Thus, $\Phi_n > 0$ when $\sigma \geq n^{-1}-\frac{1}{6}n^{-2}$.

Lastly, for the remaining case $(n+1)^{-1} + \frac{1}{6}(n+1)^{-2} < \sigma < n^{-1} - \frac{1}{6}n^{-2}$, we have $\chi_n = \chi_{n+1} = 1$.
Differentiating \eqref{form_phi_n} and again applying both \eqref{estimate_c_0} and \eqref{estimate_c_star}, we can bound
\begin{align*}
B_n^{-1} \partial_\sigma \Phi_n &= 1 + C^2_n n^{-2} \paren{n^{-2} \partial_\sigma} \log \abs{\frac{c_{n,0}+c_{n,\star}}{c_{n+1,0}+c_{n+1,\star}}} \\
&\geq 1 - 2 n^{-2} n^{\beta - 2\alpha} \text{,}
\end{align*}
which is strictly positive for sufficiently large $n_0$.
Since $\Phi_n < 0$ on $\sigma = (n+1)^{-1} + \frac{1}{6}(n+1)^{-2}$, $\Phi_n > 0$ on $\sigma = n^{-1}-\frac{1}{6}n^{-2}$, and $\partial_\sigma \Phi_n > 0$ in the above-mentioned region, it follows that for each $y \in \mc{I}$, there is a unique $\mf{s}_n (y) \in ( (n+1)^{-1} + \frac{1}{6}(n+1)^{-2}, \, n^{-1} - \frac{1}{6}n^{-2} )$ such that $\Phi_n ( \mf{s}_n (y), y ) = 0$.
In particular, this yields the desired characterisation \eqref{graph_S_n} of $S_n$.

\vspace{\medskipamount}
\noindent
\textit{2. Estimates on $\mf{s}_n$.}
For convenience, let us define
\begin{equation}
\label{h_n} h_n := C^2_n \log \abs{ c_{n,0} + c_{n,\star} } - C^2_n \log \abs{ c_{n+1,0} + c_{n+1,\star} } \text{,} 
\end{equation}
Since $\Phi_n ( \mf{s}_n (y), y ) = 0$ for every $y \in \mc{I}$, then by \eqref{frak_s_n}, we can write \eqref{form_phi_n} as
\begin{align}
\label{frak_sn_pre} \tilde{\mf{s}}_n (y) &= \mf{s}_n (y) - \paren{n^{-1} - \tfrac{2}{3}n^{-2} + C^1_n n^{-3}} \\
\notag &= n^{-4} h_n ( \mf{s}_n (y), y ) \text{.}
\end{align}
In particular, the above, along with \eqref{estimate_c_0} and \eqref{estimate_c_star}, implies the estimate
\[
\abs{\tilde{\mf{s}}_n (y)} \leq n^{\beta - 2\alpha } \text{,}
\]
where recalled that $\chi_n = \chi_{n+1} = 1$ on $\sigma = \mf{s}_n (y)$; this is precisely \eqref{tilde_frak_s_n} with $N = 0$.

For $N > 0$, we differentiate \eqref{h_n} repeatedly and then estimate
\begin{align*}
\abs{ \nabla^N \tilde{\mf{s}}_n } &\leq K_N n^{-4} \sum_{ \substack{ M+Q+P=N \\ Q_1+\dots+Q_M = Q \\ Q \leq N-1} } \abs{ \partial_\sigma^M \nabla^P h_n } \prod_{k=1}^M \abs{ \nabla^{Q_k} \nabla \mf{s}_n } \\
&\leq K_N n^{-2} \abs{ n^{-2} \partial_\sigma h_n } \abs{ \nabla^N \tilde{\mf{s}}_n } + K_N n^{-4} \sum_{ \substack{ M+P+Q=N \\ Q_1+\dots+Q_M = Q \\ Q<N-1 } } \abs{ \partial_\sigma^M \nabla^P h_n } \prod_{k=1}^M \abs{ \nabla^{Q_k} \nabla \tilde{\mf{s}}_n} \\
&\leq n^{-2} n^{\beta - 2\alpha} \abs{ \nabla^N \tilde{\mf{s}}_n } + n^{-4} \sum_{ \substack{ M+P+Q=N \\ Q_1+\dots+Q_M = Q \\ Q<N-1 } } n^{2M} n^{\beta - 2\alpha} \prod_{k=1}^M \abs{ \nabla^{Q_k} \nabla \tilde{\mf{s}}_n} \text{,}
\end{align*}
where the derivatives of $h_n$ are again controlled using \eqref{estimate_c_0} and \eqref{estimate_c_star}.
Since $\alpha \gg \beta$, the first term in the right-hand side can be absorbed in the left-hand side, giving
\[
\abs{ \nabla^N \tilde{\mf{s}}_n } \leq n^{-4} \sum_{ \substack{ M+P+Q=N \\ Q_1+\dots+Q_M = Q \\ Q<N-1 } } n^{2M} n^{\beta - 2\alpha} \prod_{k=1}^M \abs{ \nabla^{Q_k} \nabla \tilde{\mf{s}}_n} \text{.}
\]
The desired estimate \eqref{tilde_frak_s_n} now follows from the above by an induction on $N$.
(Note that in the base case $N = 0$, the right-hand side does not contain any factor involving $\tilde{\mf{s}}_n$.)
\end{proof}

The computations in the preceding proof also yield the following bounds:

\begin{corollary}\label{prop_sign_f_n}
The following estimates hold for any $n \geq n_0$:
\begin{itemize}
\item $f_{n+1} - f_n \leq K_0$ in the region $\{ ( \sigma, y ) \in \Omega_n \cap \Omega_{n+1} \mid \sigma \geq \mf{s}_n (y) \}$.
\item $f_{n} - f_{n+1} \leq K_0$ in the region $\{ ( \sigma, y ) \in \Omega_n \cap \Omega_{n+1} \mid \sigma \leq \mf{s}_n (y) \}$.
\end{itemize}
\end{corollary}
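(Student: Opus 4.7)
The plan is to extract both bounds from the identity already used in the proof of Proposition \ref{interference_prop}. Recall the decomposition
\[
\Phi_n = f_n - f_{n+1} + \log\left| \frac{ c_{n,0} + c_{n,\star} }{ c_{n+1,0} + c_{n+1,\star} } \right| \text{,}
\]
so the defining relation $\Phi_n ( \mf{s}_n (y), y ) = 0$ of $S_n$ expresses $f_n - f_{n+1}$ at $\sigma = \mf{s}_n ( y )$ directly in terms of the logarithmic ratio of the coefficients. Since the first step of the preceding proof places $\mf{s}_n ( y )$ inside the plateau where $\chi_n = \chi_{n+1} = 1$, the bound \eqref{estimate_c_0} gives $K_0^{-1} \leq | c_{m,0} | \leq K_0$ for $m \in \{ n, n+1 \}$, while \eqref{estimate_c_star} shows $| c_{m,\star} | \leq n^{\beta - 2 \alpha}$ is negligible. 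A triangle inequality then yields two-sided bounds on $| c_{m,0} + c_{m,\star} |$ of order $K_0^{\pm 1}$, and hence $| f_n - f_{n+1} | \leq K_0$ pointwise on $S_n$ (after adjusting $K_0$).

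Next, I would propagate this pointwise bound off of $S_n$ by a monotonicity argument for $f_{n+1} - f_n$ along each $y$-slice. A direct calculation from the definitions \eqref{f_n} and \eqref{def_theta} shows that on $\Omega_n \cap \Omega_{n+1}$ one has $n^2 ( \sigma - n^{-1} ) < - \tfrac{1}{8}$ (so $\theta \equiv - \tfrac{1}{2}$ in the $f_n$ factor) and $( n + 1 )^2 ( \sigma - ( n + 1 )^{-1} ) > \tfrac{1}{8}$ (so $\theta \equiv 1$ in the $f_{n+1}$ factor). Differentiating \eqref{f_n} in each regime gives $\partial_\sigma f_n = \tfrac{1}{2} n^4$ and $\partial_\sigma f_{n+1} = - ( n + 1 )^4$, so
\[
\partial_\sigma \big( f_{n+1} - f_n \big) = - ( n + 1 )^4 - \tfrac{1}{2} n^4 < 0 \text{,}
\]
and $f_{n+1} - f_n$ is strictly decreasing in $\sigma$ for each fixed $y$.

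Combining the two steps then finishes the argument: for $\sigma \geq \mf{s}_n ( y )$, monotonicity yields $f_{n+1} ( \sigma, y ) - f_n ( \sigma, y ) \leq ( f_{n+1} - f_n ) |_{ \sigma = \mf{s}_n ( y ) } \leq K_0$, and symmetrically, for $\sigma \leq \mf{s}_n ( y )$ one obtains $f_n ( \sigma, y ) - f_{n+1} ( \sigma, y ) \leq K_0$. The only delicate point is verifying that both branches of the piecewise definition \eqref{f_n} remain in their constant-$\theta$ regimes throughout $\Omega_n \cap \Omega_{n+1}$, which is precisely what the numerical gap between the $\tfrac{1}{8}$'s in \eqref{def_Omega_n} and the support thresholds in \eqref{def_theta} is engineered to enforce. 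No substantial obstacle should arise; the corollary amounts to packaging the value of $f_n - f_{n+1}$ already pinned down on $S_n$ together with a one-variable monotonicity check.
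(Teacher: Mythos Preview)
Your proposal is correct. The approach differs slightly from the paper's in organisation: the paper splits into the cases $\mf{s}_n(y) \leq \sigma \leq n^{-1} - \tfrac{1}{6}n^{-2}$ (where it invokes $\Phi_n \geq 0$ together with the log bounds) and $\sigma \geq n^{-1} - \tfrac{1}{6}n^{-2}$ (where it computes $f_{n+1}-f_n$ directly from \eqref{f_n}), whereas you pin down $|f_n - f_{n+1}| \leq K_0$ on $S_n$ and then propagate via the one-variable monotonicity of $f_{n+1}-f_n$ on the whole overlap. Both arguments rest on the same ingredients---the identity for $\Phi_n$, the fact that $\mf{s}_n$ lands in the plateau $\chi_n=\chi_{n+1}=1$, and the explicit linear form of $f_n$, $f_{n+1}$ on $\Omega_n\cap\Omega_{n+1}$---so the difference is purely organisational. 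Your version avoids the case split and is marginally cleaner; the paper's version makes the quantitative gain $f_{n+1}-f_n \leq -K_0 n^2$ in the outer region explicit, though this is not needed for the corollary as stated.
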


\begin{proof}
We focus here mainly on the first inequality.
In the case $\mf{s}_n (y) \leq \sigma \leq n^{-1} - \frac{1}{6} n^{-2}$, letting $\Phi_n$ be as in \eqref{Phi_n} and applying \eqref{estimate_c_0} and \eqref{estimate_c_star}, we obtain
\begin{align*}
f_{n+1}-f_n &= -\Phi_n + \log\abs{c_{n,0}+c_{n,\star}} - \log\abs{c_{n+1,0}+c_{n+1,\star}} \\
&\leq K_0 \text{,}
\end{align*}
as desired.
In particular, we noted (see the proof of Proposition \ref{interference_prop}) that $\Phi_n \geq 0$ in this region.
On the other hand, if $\sigma \geq n^{-1} - \frac{1}{6} n^{-2}$, we can simply apply the definition \eqref{f_n}:
\begin{align*}
f_{n+1}-f_n &= -(n+1)^2 - (n+1)^4 \paren{\sigma - (n+1)^{-1}} + n^2 - \tfrac{1}{2} n^4 \paren{\sigma - n^{-1}} \\ 
&\leq -2n - 1 - (n+1)^4 \paren{ n^{-1} - \tfrac{1}{6} n^{-2} - (n+1)^{-1} } - \tfrac{1}{2} n^4 \paren{ n^{-1} - \tfrac{1}{6} n^{-2} - n^{-1}} \\
&\leq -K_0 n^2 \text{.}
\end{align*}
Finally, the second inequality can be similarly proved by again separating into two cases:
\[
(n+1)^{-1} + \tfrac{1}{6} (n+1)^{-2} \leq \sigma \leq \mf{s}_n (y) \text{,} \qquad \sigma \geq (n+1)^{-1} + \tfrac{1}{6} (n+1)^{-2} \text{.} \qedhere
\]
\end{proof}

We now construct the modification $\omega_n$ that we will apply to $v_n$ in order to define the transformed band $\tilde{v}_n$, in particular so that both $\mc{P} \tilde{v}_n$ and $\mc{P} \tilde{v}_{n+1}$ vanish on $S_n$:

\begin{proposition} \label{omega_n}
For any $n \geq n_0$, there exists $\omega_n \in C^\infty ( \Omega_n )$ satisfying the following:
\begin{itemize}
\item $\omega_n \vert_{S_n \cup S_{n-1}} = \partial_\sigma \omega_n \vert_{S_n \cup S_{n-1}} =0$.

\item $\omega_n$ solves, on $S_n \cup S_{n-1}$: 
\begin{equation}
\label{equation_omega_n} \left. D_n^N \left[ \paren{in^{2\alpha} T_1 + T_{2,n}} (c_{n,0}+c_{n,\star}+\omega_n) \right] \right|_{S_n \cup S_{n-1}} = 0 \text{,} \qquad N \leq I_n^{\star\star} := \Big\lfloor \tfrac{I_n^\star-2}{2} \Big\rfloor \text{.}
\end{equation}

\item $\operatorname{supp} \omega_n \subset \operatorname{supp} \chi_n$.

\item The following estimates hold on $\Omega_n$:
\begin{equation}
\label{estimate_omega_n} \abs{D^N_n \omega_n} \leq n^\beta n^{(\beta-\alpha)I_n^{\star\star}} \text{,} \qquad N \leq I_n^{\star \star} \text{.}
\end{equation}
\end{itemize}
\end{proposition}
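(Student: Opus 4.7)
The plan is to construct $\omega_n$ as the sum of two cutoff Taylor polynomials, one centered on each of the graphs $S_{n-1}$ and $S_n$. By Proposition \ref{interference_prop}, $|\mf{s}_n - \mf{s}_{n-1}| \geq c n^{-2}$ for some $c > 0$ uniform in $n$, and by comparing \eqref{frak_s_n} with \eqref{def_cutoff}, for $n$ sufficiently large both graphs lie well inside the region $\{ \chi_n = 1 \}$. Hence the two Taylor pieces can be localized to disjoint $\sigma$-strips of width $\sim n^{-2}$ contained in $\operatorname{supp} \chi_n$.

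First I would fix $k \in \{n-1, n\}$ and introduce a partial Taylor polynomial
\begin{equation*}
P_k ( \sigma, y ) := \sum_{j=0}^{J} \frac{( \sigma - \mf{s}_k ( y ) )^j}{j!} \, w^k_j ( y ) \text{,} \qquad J := I_n^{\star\star} + 2 \text{,}
\end{equation*}
whose coefficients $w^k_j \in C^\infty ( \mc{I}; \C )$ are defined inductively. Set $w^k_0 = w^k_1 = 0$ to enforce the vanishing of $\omega_n$ and $\partial_\sigma \omega_n$ on $S_k$. For $j \geq 2$, I would solve the scalar equation
\begin{equation*}
\partial_\sigma^{j-2} \bigl[ ( i n^{2\alpha} T_1 + T_{2,n} )( c_{n,0} + c_{n,\star} + P_k ) \bigr] \big|_{S_k} = 0
\end{equation*}
for $w^k_j$. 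The coefficient of the highest-order unknown $w^k_j$ in this equation is (up to a small $O( n^{\beta - 2\alpha} )$ correction coming from $\nabla \mf{s}_k$ via \eqref{tilde_frak_s_n}) the leading $\partial_\sigma^2$ symbol of $T_{2,n}$, namely $g^{-1} ( d\sigma, d\sigma ) |_{S_k}$, which by \eqref{boundedness_metric} is bounded below by $C n^{-\gamma}$; so the equation is uniquely solvable. I would then set
\begin{equation*}
\omega_n := \sum_{k \in \{n-1, n\}} \eta_{k, n} ( \sigma, y ) \, P_k ( \sigma, y ) \text{,}
\end{equation*}
for smooth cutoffs $\eta_{k, n}$ supported in disjoint narrow strips around $S_k$, each of width proportional to $|\mf{s}_n - \mf{s}_{n-1}|$ and identically $1$ on a slightly smaller neighborhood of $S_k$. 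By construction, $\omega_n |_{S_k} = \partial_\sigma \omega_n |_{S_k} = 0$ and $\partial_\sigma^j \omega_n |_{S_k} = w^k_j$ for $0 \leq j \leq J$. The identity $\partial_{y^i} F |_{S_k} = \partial_{y^i} ( F|_{S_k} ) - \partial_{y^i} \mf{s}_k \cdot \partial_\sigma F |_{S_k}$, valid for any smooth $F$, then permits one to express any $D_n^N$ derivative of $E := ( i n^{2\alpha} T_1 + T_{2,n} )( c_{n,0} + c_{n,\star} + \omega_n )$ on $S_k$ as a linear combination of the pure $\partial_\sigma^j E |_{S_k}$ with $j \leq N$; since these vanish by construction for $N \leq I_n^{\star\star}$, the condition \eqref{equation_omega_n} follows.

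The main obstacle is to propagate the uniform estimate \eqref{estimate_omega_n} through this induction. Each $w^k_j$ is a rational expression in derivatives (of total order at most $j$) of $c_{n,0} + c_{n,\star}$, $g$, $\xi$, $\varphi$, $f_n$, and $\mf{s}_k$, with denominator a positive power of $g^{-1} ( d\sigma, d\sigma ) |_{S_k}$ that contributes a loss of at most $n^\gamma$ per inductive step. The source of this induction is the ``forcing" quantity $( i n^{2\alpha} T_1 + T_{2,n} )( c_{n,0} + c_{n,\star} )$, whose $D_n^N$ derivatives are controlled by $n^\beta n^{2 ( \beta - \alpha ) I_n^\star}$ via \eqref{prop_estimate_sum}, while the remaining inputs are controlled by \eqref{estimate_c_0}, \eqref{estimate_c_star}, \eqref{estimate_derivative_f_n}, and \eqref{tilde_frak_s_n}. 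Since $I_n^{\star\star}$ is roughly $I_n^\star / 2$ and $\alpha \gg \beta, \gamma$, the gain $n^{2 ( \beta - \alpha ) I_n^\star}$ dominates the accumulated $n^\gamma$ losses over the at most $J$ inductive steps, yielding $| \nabla^M w^k_j | \leq n^\beta n^{ ( \beta - \alpha ) I_n^{\star\star} }$ in the appropriate range of $M, j$. Finally, differentiating $\omega_n = \sum_k \eta_{k, n} P_k$ via the product rule, each $\partial_\sigma$ falling on a cutoff $\eta_{k, n}$ produces a factor of order $n^2$; but on $\operatorname{supp} \partial_\sigma \eta_{k, n}$ one has $| \sigma - \mf{s}_k | \sim n^{-2}$, so each such factor is absorbed by a compensating $( \sigma - \mf{s}_k )^j$ with $j \geq 2$ inside $P_k$. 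This yields the required uniform bound \eqref{estimate_omega_n} after a final adjustment of $\alpha$ and $\beta$.
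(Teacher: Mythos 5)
Your proposal is correct and follows essentially the same route as the paper: a sum of two cutoff Taylor expansions centered on $S_{n-1}$ and $S_n$, with the transverse-derivative data determined recursively from the equation using the ellipticity in $\sigma$ provided by \eqref{boundedness_metric}, and uniform control propagated from \eqref{prop_estimate_sum}. The only cosmetic difference is that the paper first flattens the graphs via the change of variable $\eta := n^{-2}(\sigma - \mf{s}_n)/(\mf{s}_{n-1}-\mf{s}_n)$ (controlling the Jacobian with \eqref{tilde_frak_s_n}), whereas you work directly in the graph parameterization and handle the induced chain-rule terms by hand; the two bookkeeping schemes are equivalent.
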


\begin{proof}   
We first introduce a harmless change of coordinates $( \sigma, y ) \mapsto ( \eta, \tilde{y} )$ as follows:
\begin{equation}
\label{change_variable} \eta(\sigma, y) := \frac{ n^{-2} \paren{\sigma - \mf{s}_n(y)} }{\mf{s}_{n-1} (y) - \mf{s}_n (y) } \text{,} \qquad \tilde{y} (\sigma, y) := y \text{.}
\end{equation}
With this set of coordinates, the sets $S_n$ and $S_{n-1}$ take the simple form: 
\begin{equation}
\label{Sn_tilde} S_n = \lbrace x \in \Omega_n \cap \Omega_{n+1} \mid \eta (x) = 0 \rbrace \text{,} \qquad S_{n-1} = \lbrace x \in \Omega_{n-1} \cap \Omega_n \mid \eta (x) = n^{-2} \rbrace \text{.}
\end{equation}
In addition, we denote derivatives in $\tilde{y}$-coordinates by $\tilde{\nabla}$, and we set
\begin{equation}
\label{tilde_deriv} \tilde{D} := (\partial_\eta, \tilde{\nabla}) \text{,} \qquad \partial_{\eta, n} := n^{-2} \partial_\eta \text{,} \qquad \tilde{D}_n := (\partial_{\eta,n}, \tilde{\nabla}) \text{.}
\end{equation}

We now claim the Jacobian associated to \eqref{change_variable} satisfies, on $\Omega_n$,
\begin{equation}
\label{jacobian} K_0^{-1} \leq \abs{\frac{\partial(\eta, \tilde{y})}{\partial(\sigma, y)}} \leq K_0 \text{.}
\end{equation}
For this, the non-trivial derivatives to estimate are
\[
\partial_\sigma \eta = \frac{n^{-2}}{\mf{s}_{n-1}-\mf{s}_n} \text{,} \qquad \nabla \eta = - \frac{n^{-2} \nabla \tilde{\mf{s}}_n}{ \mf{s}_n - \mf{s}_{n-1} } - \frac{ \eta \cdot \nabla \paren{\tilde{\mf{s}}_{n-1} - \tilde{\mf{s}}_n} }{ \mf{s}_n - \mf{s}_{n-1} } \text{.}
\]
From the above, along with \eqref{frak_s_n} and \eqref{tilde_frak_s_n}, we immediately estimate
\[
K_0^{-1} \leq \partial_\sigma \eta \leq K_0 \text{,} \qquad \abs{\nabla \eta} \leq n^{\beta - 2\alpha} \leq n^{-2} \text{,}
\]
which immediately implies \eqref{jacobian}.
Note that \eqref{jacobian} yields that for any $h \in C^\infty (\Omega_n)$: 
\begin{equation}
\label{derivative_eta} K_N^{-1} \abs{D^{\leq N}_n h} \leq \abs{\tilde{D}{}^{\leq N}_n h} \leq K_N \abs{D^{\leq N}_n h} \text{,} \qquad \abs{\tilde{\nabla}{}^{\leq N} h} \leq K_N \abs{D^{\leq N}_n h} \text{.}
\end{equation}

\vspace{\medskipamount}
\noindent
\textit{1. Data for $\omega_n$ on $S_n \cup S_{n-1}$.}
Informally, the relation \eqref{equation_omega_n} (for $N = 0$) can be written on $S_n \cup S_{n-1}$ as: 
\[
g^{-1} (d\eta, d\eta) \, \partial_\eta^2 \omega_n =n^{2\alpha} \paren{ \mathbf{q}_{n, 0, 1} \cdot \partial_{\eta} \tilde{\nabla}^{\leq 1} \omega_n + \mathbf{q}_{n,0,2} \cdot \tilde{\nabla}^{\leq 2} \omega_n } - \paren{in^{2\alpha} T_1 + T_{2,n}}\paren{c_{n,0} + c_{n,\star}} \text{,}
\]
with $\mathbf{q}_{n, M, k}$ appropriate smooth and bounded vector-valued functions obeying: 
\[
\abs{\mathbf{q}_{n, M, k}} \leq K_{M+k} \text{,} \qquad M + k\leq I_n^{\star \star} \text{.}
\]
By differentiating, one obtains, for each $M \geq 0$, the recursive relation
\[
g^{-1} (d\eta, d\eta) \, \partial^{M+2}_{\eta} \omega_n = n^{2 \alpha} \sum_{k=0}^M \sum_{\ell = 1, 2} \bm{q}_{n, k, \ell} \cdot \partial_\eta^{M+2-k-\ell} \tilde{\nabla}^{\leq \ell} \omega_n - \partial^M_{\eta} \paren{in^{2\alpha} T_1 + T_{2,n}}(c_{n,0} + c_{n,\star}) \text{.}
\]

With the above as motivation, we define the data $\smash{h^{(j)}_{n, M}}$ (with $j=0, 1$) on $S_{n-j}$ as
\begin{align}
\label{definition_data} ( h^{(j)}_{n,0}, h^{(j)}_{n,1} ) &:= ( 0, 0 ) \text{,} \\
\notag g^{-1} (d\eta, d\eta) \, h^{(j)}_{n,M+2} (\tilde{y}) &:= n^{2\alpha} \sum_{k=0}^M \sum_{\ell = 1, 2} \mathbf{q}_{n,k,\ell} \paren{\eta = jn^{-2}, \tilde{y}} \cdot \tilde{\nabla}^{\leq \ell} h_{n, M+2-k-\ell} \paren{\tilde{y}} \\
\notag &\qquad - \partial_{\eta}^M \paren{in^{2\alpha}T_1 + T_{2,n}} \paren{c_{n,0}+c_{n,\star}} (\eta = jn^{-2}, \tilde{y}) \text{,}
\end{align}
for every $M \leq I^{\star \star}_n$.
In particular, $\smash{h^{(0)}_{k,n}}$ corresponds to data for the transverse derivatives $\partial_{\eta, n}^{k+2} \omega_n$ evaluated on $S_n$, and similarly for the $\smash{h^{(1)}_{k, n}}$'s and $S_{n-1}$. 
We now claim the $( \smash{h_{n,M+2}^{(j)}} )$'s are smooth and bounded for each $j=0,1$ and $M\leq I_n^{\star\star}$, and that for any $N \leq I_n^\star - M - 2$:
\begin{align}
\label{estimate_data} \abs{\tilde{\nabla}^N h^{(j)}_{n,M+2}} \leq n^\beta n^{2(\beta - \alpha) I_n^\star} n^{2M\alpha} \text{.}
\end{align}

To prove \eqref{estimate_data}, we first note from \eqref{boundedness_metric} and the computations behind \eqref{jacobian} that
\[
\abs{g^{-1} (d\eta, d\eta)} \geq K_0^{-1}n^{-\gamma} \text{.}
\]
(In particular, from \eqref{definition_data}, the $h^{(j)}_{ n, M+2 }$'s are well-defined and smooth.)
We now proceed by induction on $M$---first, for $M = 0$, we apply \eqref{definition_data} and the above to estimate, for any $N \leq I_n^\star - 2$,
\begin{align*}
\abs{\tilde{\nabla}^N h^{(j)}_{n,2}} &\leq K_{N+2} n^\gamma \abs{D^N_n \paren{in^{2\alpha}T_1+T_{2,n}} \paren{c_{n,0}+c_{n,\star}} } \\
&\leq K_{N+2} n^\beta n^{2(\beta-\alpha)I_n^\star} \text{,}
\end{align*}
where we also made use of \eqref{prop_estimate_sum} and \eqref{derivative_eta}, and where in the last step, the contribution of $n^\gamma$ was absorbed by adjusting $\beta$.
(Recall in particular that $\chi_n=1$ on both $S_n$ and $S_{n-1}$.)
Next, fix $M \leq I_n^{\star\star}$, and assume \eqref{estimate_data} holds for any $\tilde{M}<M$ in the place of $M$.
Then,
\begin{align*}
\abs{\tilde{\nabla}^N h^{(j)}_{n,M+2}} &\leq K_{ N + M + 2 } n^\beta n^{2\alpha} \sum_{k = 1}^M \abs{\tilde{\nabla}^{\leq N+k} h^{(j)}_{n,M+2-k}} \\
&\qquad + n^{2M} n^\beta \abs{D_n^{\leq N+M} \paren{in^{2\alpha} T_1 + T_{2,n}} \paren{c_{n,0}+ c_{n,\star}}} \\
&\leq K_{M+N+2} n^\beta \left[ n^{2\alpha} n^{2(\beta-\alpha) I_n^\star} n^{2(M-1)\alpha} + n^{2M} n^\beta n^{2(\beta - \alpha)I_n^\star} \right] \\
&\leq K_{M+N+2} n^\beta n^{2(\beta-\alpha)I_n^\star} n^{2M\alpha} \text{,}
\end{align*}
for any $N \leq I_n^\star - M - 2$, where we also applied \eqref{prop_estimate_sum} and the induction hypothesis.
Adjusting the constants $K_{M+N+2}$ and $\beta$ as needed results in the desired estimate \eqref{estimate_data}.

\vspace{\medskipamount}
\noindent
\textit{2. Construction of $\omega_n$.}
We can now define $\omega_n$ on $\Omega_n$ by
\begin{equation}
\label{expansion_omega} \omega_n (\eta, \tilde{y}) := \sum_{j=0,1} \zeta(n^2(\eta-jn^{-2})) \sum_{0 \leq k \leq I_n^{\star\star}} h^{(j)}_{k+2,n}(\tilde{y}) \, \tfrac{\paren{\eta - jn^{-2}}^{k+2}}{(k+2)!} \text{,}
\end{equation}
where $\zeta$ is a smooth cutoff satisfying
\[
\zeta(s) = \begin{cases}
  1 &\quad \abs{s} \leq \frac{\varepsilon}{2} \text{,} \\
  0 &\quad \abs{s} \geq \varepsilon \text{,}
\end{cases}
\qquad \varepsilon > 0 \text{.}
\]
(In particular, $\omega_n$ matches the data $( h^{(j)}_{n, M} )$ up to order $I_n^{\ast\ast} + 2$ on $S_{n-j}$, and smoothly extends this data to all of $\Omega_n$.)
Notice that if $\varepsilon$ is sufficiently small, then $\omega_n$ satisfies the first three properties in the statement of Proposition \ref{omega_n} by construction.

Thus, it remains only to prove the bound \eqref{estimate_omega_n}, which is a consequence of \eqref{estimate_data}.
For simplicity, we only consider the terms $j=0$ in the right-hand side of \eqref{expansion_omega}.
(The $j=1$ terms are similarly controlled.)
In this case, given any $m + \ell \leq I_n^{\star\star}$, we obtain:
\begin{align*}
\abs{\partial_{\eta,n}^m \tilde{\nabla}^\ell \sum_{k=0}^{I_n^{\star\star}} \frac{ \eta^{k+2} }{(k+2)!} h^{(0)}_{n,k+2} \, \zeta\paren{n^2\eta} } &\leq K_{ I_n^{\star\star} } \sum_{k=0}^{I_n^{\star\star}} \abs{\tilde{\nabla}^\ell h^{(0)}_{n,k+2}} \abs{\partial_{\eta,n}^m \paren{\zeta(n^2\eta) \eta^{k+2}}} \\
&\leq n^\beta n^{2(\beta-\alpha) I_n^\star} n^{2 I_n^{\star\star} \alpha} \\
&\leq n^\beta n^{2(\beta-\alpha) I_n^{\star\star}} \text{,}
\end{align*}
where we applied \eqref{estimate_data} and recalled the definition of $I_n^{\star\star}$ in \eqref{equation_omega_n}.
Since analogous estimates hold for the $j=1$ terms, \eqref{estimate_omega_n} finally follows from the above and \eqref{derivative_eta}.
\end{proof}

Using the $\omega_n$'s, we can now define our modified geometric optics bands $\tilde{v}_n$:

\begin{definition}
For any $n \geq n_0$, we define the function $\tilde{v}_n$ on $\Omega_n$ by
\begin{equation}
\label{v_n_tilde} \tilde{v}_n := v_n + e^{in^{2\alpha}\varphi} e^{f_n} \omega_n = e^{in^{2\alpha}\varphi}e^{f_n}\paren{c_{n,0}+c_{n,\star}+\omega_n} \text{.}
\end{equation}
\end{definition}

\begin{proposition}\label{prop_properties_v_n_tilde}
The following holds for any $n \geq n_0$:
\begin{equation}
\label{supp_v_n_tilde} \operatorname{supp} \tilde{v}_n \subset \operatorname{supp} \chi_n \text{.}
\end{equation}
Furthermore, for any $\mu \geq 0$ and $N \geq 0$,
\begin{equation}
\label{decay_v_n} \lim_{n \rightarrow +\infty} n^\mu \sup_{\Omega_n} \abs{D^N \tilde{v}_n} = 0 \text{.}
\end{equation}
\end{proposition}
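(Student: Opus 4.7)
The plan is to handle the two claims separately. For the support inclusion \eqref{supp_v_n_tilde}, I would argue by tracking support through each piece of the definition \eqref{v_n_tilde}. The initial datum $c_{n,0}|_{s=0} = \underline{\chi}_n(\sigma)$ has $\sigma$-support inside $\operatorname{supp}\underline{\chi}_n$, and since $T_1 = \partial_s + \Box_g \varphi$ is transport in $s$ with a purely multiplicative zeroth-order term, the solution $c_{n,0}$ of \eqref{def_c_0} keeps this $\sigma$-support. Inductively, the source $-in^{-\alpha} T_{2,n} c_{n,j-1}$ in \eqref{def_c_j} is a differential operator applied to $c_{n,j-1}$, so its support is contained in $\operatorname{supp} c_{n,j-1} \subseteq \operatorname{supp}\chi_n$; together with the zero initial data, $s$-transport yields $\operatorname{supp} c_{n,j}\subseteq \operatorname{supp}\chi_n$ for all $j$. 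Summing and combining with $\operatorname{supp}\omega_n\subseteq\operatorname{supp}\chi_n$ from Proposition \ref{omega_n} gives \eqref{supp_v_n_tilde}.

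For the decay \eqref{decay_v_n}, the key observation is that $e^{f_n}\leq e^{-\frac{7}{8}n^2}$ on $\Omega_n$ by \eqref{estimate_f_n}, and this super-polynomial decay in $n$ will dominate any polynomial-in-$n$ growth from differentiating the remaining factors. I would expand $D^N\tilde{v}_n$ via the Leibniz rule applied to the three factors $e^{in^{2\alpha}\varphi}$, $e^{f_n}$, and $c_{n,0}+c_{n,\star}+\omega_n$, and estimate each piece: Faà di Bruno applied to $e^{in^{2\alpha}\varphi}$ produces $e^{in^{2\alpha}\varphi}$ times a sum of products of powers of $n^{2\alpha}$ and derivatives of $\varphi$, all polynomially bounded in $n$ on $\Omega_n$ using $\varphi\in\mc{B}_2^\infty$ together with $\sigma\sim n^{-1}$; Faà di Bruno applied to $e^{f_n}$ yields $e^{f_n}$ times a polynomial in the $\partial_\sigma^k f_n$, which are polynomially bounded in $n$ by \eqref{estimate_derivative_f_n}; and $D^N(c_{n,0}+c_{n,\star}+\omega_n)$ is polynomially bounded using \eqref{estimate_c_0}, \eqref{estimate_c_star}, and \eqref{estimate_omega_n}, after converting the rescaled derivatives $D_n$ back into $D$ via $\partial_\sigma = n^2 \partial_{\sigma,n}$.

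Combining the three estimates produces a uniform bound of the form $|D^N\tilde{v}_n|\leq e^{-\frac{7}{8}n^2}\,P_N(n)$ on $\Omega_n$ for some polynomial $P_N$ with degree depending on $N$, $\alpha$, and $\beta$. Since $n^\mu e^{-\frac{7}{8}n^2} P_N(n)\to 0$ as $n\to\infty$ for every fixed $\mu, N\geq 0$, the conclusion \eqref{decay_v_n} follows immediately.

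The main (quite modest) obstacle is bookkeeping: one must check that the Leibniz expansion generates only a polynomial prefactor in $n$, which requires that for each fixed $N$, the constraints $N\leq I_n^\star$ and $N\leq I_n^{\star\star}$ needed to invoke \eqref{estimate_c_star} and \eqref{estimate_omega_n} are eventually satisfied—but this is guaranteed since $I_n, I_n^\star, I_n^{\star\star}\to\infty$ by Proposition \ref{estimate_K_N}. Once this is in place, the exponential decay from $e^{f_n}$ swallows the polynomial factors regardless of how large their exponents in $n$ become.
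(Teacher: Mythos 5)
Your proposal is correct and follows essentially the same route as the paper's proof, which also asserts the support inclusion is immediate from the supports of $c_{n,0}$, $c_{n,\star}$, $\omega_n$ and obtains the decay by combining \eqref{estimate_f_n}, \eqref{estimate_c_0}, \eqref{estimate_c_star}, \eqref{estimate_omega_n} to get a bound of the form $n^{2N\alpha}e^{-Cn^2}$ for $n\gg 1$. You merely unfold the details (transport preserves $\sigma$-support; Leibniz and Fa\`a di Bruno give polynomial prefactors; the constraints $N\leq I_n^\star$, $N\leq I_n^{\star\star}$ hold for large $n$) that the paper compresses into a single sentence.
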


\begin{proof}
\eqref{supp_v_n_tilde} is immediate, as by construction, $c_{n, 0}$, $c_{n,\star}$, and $\omega_n$ are all supported within $\operatorname{supp} \chi_n$.
The property \eqref{decay_v_n} follows from the estimates \eqref{estimate_f_n}, \eqref{estimate_c_0}, \eqref{estimate_c_star} and \eqref{estimate_omega_n}, which yield
\[
\sup_{\Omega_n} \abs{D^N \tilde{v}_n} \lesssim n^{2N\alpha} e^{-Cn^2} \text{,} \qquad n \gg_{ n_0, N } 1 \text{.} \qedhere
\]
\end{proof}

\subsection{Construction of the counterexample}

We now construct our desired counterexample $u$ in Theorem \ref{Theorem} by ``gluing together" each of the modified bands $\tilde{v}_n$:

\begin{definition}
We define the following functions on $\Omega$,
\begin{equation}
\label{ua} u := \sum_{n \geq n_0} \tilde{v}_n \text{,} \qquad a := \frac{ \mc{P} u }{u} \text{,}
\end{equation}
where each $\tilde{v}_n$ is defined to be zero outside of $\Omega_n$.
\end{definition}

\begin{proposition}
$u \in C^\infty ( \Omega )$, and the following holds for any $N, \mu > 0$:
\begin{equation}
\label{u_est} \lim_{ \sigma_0 \searrow 0} \sup_{ \lbrace \sigma = \sigma_0 \rbrace } \abs{\sigma^{-\mu} D^N u} = 0 \text{.}
\end{equation}
\end{proposition}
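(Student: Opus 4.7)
My plan is to prove the two assertions in the proposition separately, relying on the disjointness of the bands $(\Omega_n)_{n \geq n_0}$ together with the super-polynomial decay estimate \eqref{decay_v_n} for the $\tilde{v}_n$'s.

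First, for smoothness, I would note that each $\tilde{v}_n$ is, by \eqref{supp_v_n_tilde}, supported inside $\operatorname{supp} \chi_n$, which by the definitions \eqref{def_Omega_n} and \eqref{def_cutoff} lies strictly interior to $\Omega_n$, with a $\sigma$-gap of size $O(n^{-2})$ away from $\partial \Omega_n$ (since $\tfrac{1}{7}>\tfrac{1}{8}$). Consequently, extending $\tilde{v}_n$ by zero outside $\Omega_n$ produces a smooth function on all of $\Omega$. Combining this with the fact that $\Omega_n \cap \Omega_m = \emptyset$ whenever $|n-m|>1$, I conclude that at every point of $\Omega$ at most two terms of $u = \sum_n \tilde{v}_n$ are nonzero; thus $u$ is locally a finite sum of smooth functions and hence $u \in C^\infty(\Omega)$.

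Next, for the decay statement, I fix $N, \mu > 0$ and consider, for any $s_\ast \in (0, \sigma_0)$, the finite set
\[
\mathcal{N}(s_\ast) := \{ n \geq n_0 \mid \operatorname{supp} \tilde{v}_n \cap \{\sigma = s_\ast\} \neq \emptyset \}.
\]
The band disjointness forces $|\mathcal{N}(s_\ast)| \leq 2$, and the definition \eqref{def_Omega_n} forces $s_\ast > \tfrac{1}{n+1}$ for every $n \in \mathcal{N}(s_\ast)$, so $s_\ast^{-\mu} \leq (n+1)^\mu$ for such $n$. Letting $n_\ast := \min \mathcal{N}(s_\ast)$, this yields
\[
\sup_{\{\sigma = s_\ast\}} \abs{\sigma^{-\mu} D^N u} \leq 2 (n_\ast+1)^\mu \max_{n \in \mathcal{N}(s_\ast)} \sup_{\Omega_n} \abs{D^N \tilde{v}_n}.
\]
Since $n_\ast \to +\infty$ as $s_\ast \searrow 0$, applying \eqref{decay_v_n} with $\mu + 1$ in place of $\mu$ shows the right-hand side tends to $0$, which is exactly the desired limit.

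I do not anticipate any genuine obstacle here: all substantive analytic content has already been extracted in \eqref{supp_v_n_tilde} and \eqref{decay_v_n}. The only care required is bookkeeping, matching the polynomial factor $s_\ast^{-\mu} \lesssim n^\mu$ coming from proximity to the boundary against the super-polynomial (in fact Gaussian, of the form $O(n^{2N\alpha} e^{-C n^2})$ by \eqref{estimate_f_n} and the proof of Proposition \ref{prop_properties_v_n_tilde}) decay of $|D^N \tilde{v}_n|$, which easily wins.
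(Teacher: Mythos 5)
Your proof is correct and is essentially the same as the paper's, which likewise observes that at most two bands $\tilde{v}_n, \tilde{v}_{n+1}$ can overlap at any point, that $\sigma^{-1} \sim n$ on $\Omega_n$, and then invokes \eqref{decay_v_n}. You merely spell out the bookkeeping (the strict interiority of $\operatorname{supp}\chi_n$ in $\Omega_n$ for smoothness, and the explicit trade of $s_\ast^{-\mu}\lesssim(n+1)^\mu$ against \eqref{decay_v_n} with $\mu+1$) that the paper leaves implicit.
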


\begin{proof}
By definition, one of the following must hold for any $x \in \Omega_n$, with $n > n_0$:
\begin{enumerate}
\item $x$ is in the support of only exactly one $\tilde{v}_n$. 
\item $x$ lies in the supports of $\tilde{v}_n$ and $\tilde{v}_{n+1}$, for some unique $n$.
\end{enumerate}
The estimate \eqref{u_est} follows immediately from \eqref{decay_v_n} and the above, since $\sigma^{-1} \sim n$ on each $\Omega_n$.
\end{proof}

In particular, \eqref{u_est} yields the desired vanishing for the counterexample $u$ in Theorem \ref{Theorem}.
Now, it remains only to prove the requisite properties of the potential $a$ from \eqref{ua}.

\begin{proposition}
The following holds for any $n \geq n_0$:
\begin{equation}
\label{S_tilde_n} \tilde{S}_n := \lbrace x \in \Omega \mid \abs{\tilde{v}_n (x)} = \abs{\tilde{v}_{n+1}(x)} \rbrace = S_n \text{.}
\end{equation}
Furthermore, the following estimate holds on $\Omega_n \cap \Omega_{n+1}$:
\begin{equation}
\label{proposition_s_n_tilde} \abs{\tilde{v}_n + \tilde{v}_{n+1}} \geq \begin{cases}
\abs{\tilde{v}_n} \cdot K_0 \min(\sigma - \mf{s}_n, 1) &\quad \sigma \geq \mf{s}_n \text{,} \\
\abs{\tilde{v}_{n+1}} \cdot K_0 \min(\mf{s}_n - \sigma, 1) &\quad \sigma \leq \mf{s}_n \text{.}
\end{cases}
\end{equation}
\end{proposition}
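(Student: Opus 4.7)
The plan is to reduce everything to the analysis of $\Phi_n$ performed in the proof of Proposition \ref{interference_prop}, by writing
\[
\tilde{\Phi}_n := \log\left|\tfrac{\tilde{v}_n}{\tilde{v}_{n+1}}\right| = \Phi_n + \log\left|1 + \tfrac{\omega_n}{c_{n,0}+c_{n,\star}}\right| - \log\left|1 + \tfrac{\omega_{n+1}}{c_{n+1,0}+c_{n+1,\star}}\right|,
\]
and showing that both correction brackets, together with their $\sigma$-derivatives, are negligible relative to the scales of $\Phi_n$ (bounded by $K_0$) and $\partial_\sigma\Phi_n$ (bounded below by $\tfrac{1}{2}K_0^{-1}B_n$). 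By \eqref{estimate_c_0}, \eqref{estimate_c_star}, and \eqref{estimate_omega_n}, this reduces to the elementary observation that $\omega_n/(c_{n,0}+c_{n,\star})$ and its $\partial_{\sigma,n}$-derivative decay faster than any power of $n^{-1}$ provided $\alpha \gg \beta$.

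Granting this, I establish $\tilde S_n = S_n$ in two steps. On $S_n$, Proposition \ref{omega_n} (applied to both $n$ and $n+1$, noting $S_n = S_{(n+1)-1}$) gives $\omega_n = \omega_{n+1} = 0$, hence $\tilde{v}_n|_{S_n} = v_n|_{S_n}$ and $\tilde{v}_{n+1}|_{S_n} = v_{n+1}|_{S_n}$, and \eqref{def_S_n} immediately yields $S_n \subset \tilde S_n$. For the reverse inclusion, any point of $\tilde S_n$ at which both $\tilde v_n$ and $\tilde v_{n+1}$ are nonzero necessarily lies in $\Omega_n \cap \Omega_{n+1}$, where I repeat the sign and monotonicity argument of Proposition \ref{interference_prop}. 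The perturbative step above yields $\tilde\Phi_n < 0$ for $\sigma \leq (n+1)^{-1} + \frac{1}{6}(n+1)^{-2}$, $\tilde\Phi_n > 0$ for $\sigma \geq n^{-1} - \frac{1}{6}n^{-2}$, and $\partial_\sigma\tilde\Phi_n \geq \tfrac{1}{2}K_0^{-1}B_n > 0$ on the intermediate strip; combined with $\tilde\Phi_n(\mf{s}_n(y), y) = 0$, these force $\tilde\Phi_n(\cdot, y)$ to have $\mf{s}_n(y)$ as its unique zero for each $y \in \mc{I}$.

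For the lower bound \eqref{proposition_s_n_tilde}, I begin with the reverse triangle inequality $|\tilde v_n + \tilde v_{n+1}| \geq \big||\tilde v_n| - |\tilde v_{n+1}|\big|$ and use $|\tilde v_{n+1}|/|\tilde v_n| = e^{-\tilde\Phi_n}$ to write $|\tilde v_n + \tilde v_{n+1}| \geq |\tilde v_n|(1 - e^{-\tilde\Phi_n})$ whenever $\tilde\Phi_n \geq 0$, and symmetrically when $\tilde\Phi_n \leq 0$. For $\sigma \geq \mf{s}_n(y)$ inside the intermediate strip, $\tilde\Phi_n \geq \tfrac{1}{2}K_0^{-1}B_n(\sigma - \mf{s}_n)$, and combining this with the elementary inequality $1 - e^{-x} \geq \tfrac{1}{2}\min(x, 1)$ for $x \geq 0$ and using $B_n \geq 1$ to fold $B_n$ into the minimum delivers the claimed bound (after absorbing constants into $K_0$). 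For $\sigma \geq n^{-1} - \frac{1}{6}n^{-2}$, $\tilde\Phi_n \geq c n^2$ makes $1 - e^{-\tilde\Phi_n}$ bounded away from $0$ and the bound is immediate. The case $\sigma \leq \mf{s}_n(y)$ is handled by exactly the same dichotomy, exchanging the roles of $\tilde v_n$ and $\tilde v_{n+1}$.

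The only genuine obstacle is the perturbative step, namely verifying that the $\omega$-corrections to $\Phi_n$ and to $\partial_\sigma\Phi_n$ are strictly dominated by $K_0$ and $B_n$, respectively; once this is secured, the rest of the argument is a direct reprise of the calculations in the proof of Proposition \ref{interference_prop}, combined with the elementary bound on $1 - e^{-x}$.
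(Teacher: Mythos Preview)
Your proposal is correct and follows essentially the same approach as the paper. Both you and the paper introduce $\tilde{\Phi}_n := \log|\tilde{v}_n/\tilde{v}_{n+1}|$, establish the same three properties (sign on the two boundary strips and strict $\sigma$-monotonicity in between) by reprising the case analysis from Proposition \ref{interference_prop} with the $\omega_n$-corrections absorbed as lower-order, and then derive \eqref{proposition_s_n_tilde} from the reverse triangle inequality together with the elementary bound $1 - e^{-x} \gtrsim \min(x,1)$; the only cosmetic difference is that you frame $\tilde{\Phi}_n$ as $\Phi_n$ plus small corrections, while the paper expands $\tilde{\Phi}_n$ directly.
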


\begin{proof}
First, note that \eqref{def_S_n} implies $S_n \subseteq \tilde{S}_n$, since $\omega_n$ vanishes on $S_n$ by Proposition \ref{omega_n}.
For the opposite inclusion, we define, similar to Proposition \ref{interference_prop}, the quantity
\[
\tilde{\Phi}_n := \log\abs{\frac{\tilde{v}_n}{\tilde{v}_{n+1}}} \text{.}
\]
Note that $\smash{\tilde{\Phi}_n}$ satisfies
\begin{align*}
B_n^{-1} \tilde{\Phi}_n &= \sigma - \paren{n^{-1} - \tfrac{2}{3}n^{-2} + C^1_n n^{-3}} + C^2_n n^{-4} \log \abs{c_{n,0}+c_{n,\star}+\omega_n} \\
&\qquad -C ^2_n n^{-4} \log \abs{c_{n+1,0}+c_{n+1,\star}+\omega_{n+1}} \text{,}
\end{align*}
where the constants $B_n$, $C^1_n$, $C^2_n$ are as in the proof of Proposition \ref{interference_prop}.
Proceeding from the above as in the proof of \eqref{graph_S_n}, we then obtain the following properties for $\smash{\tilde{\Phi}_n}$:
\begin{itemize}
\item $\smash{\tilde{\Phi}_n} \leq -K_0 (n+1)^2$ whenever $\sigma \leq (n+1)^{-1} + \frac{1}{6}(n+1)^{-2}$.

\item $\smash{\tilde{\Phi}_n} \geq K_0 n^2$ whenever $\sigma \geq n^{-1} - \frac{1}{6} n^{-2}$.

\item $\partial_\sigma \smash{\tilde{\Phi}_n} \geq K_0 n^4$ whenever $(n+1)^{-1} + \frac{1}{6}(n+1)^{-2} < \sigma < n^{-1} -\frac{1}{6}n^{-2}$.
\end{itemize}
Consequently, for any $y \in \mc{I}$, the quantity $\smash{\tilde{\Phi}_n} (\sigma_0, y)$ can only vanish at a single $\sigma_0 > 0$.
In particular, this yields the inclusion $\tilde{S}_n \subseteq S_n$ and completes the proof of \eqref{S_tilde_n}.

Furthermore, the above bounds on $\tilde{\Phi}_n$ immediately imply
\[
\tilde{\Phi}_n \begin{cases}
  \geq K_0 \cdot \min( n^4(\sigma - \mf{s}_n), n^2) &\quad \sigma \geq \mf{s}_n \text{,} \\
  \leq -K_0 \cdot \min( n^4(\mf{s}_n - \sigma), n^2) &\quad \sigma \leq \mf{s}_n \text{.}
\end{cases}
\]
As a consequence, from the definition of $\tilde{\Phi}_n$, we derive \eqref{proposition_s_n_tilde}:
\begin{align*}
\abs{\tilde{v}_n + \tilde{v}_{n+1}} &\geq \begin{cases}
  \abs{\tilde{v}_n} \abs{1 - e^{-\tilde{\Phi}_n}} &\quad \sigma \geq \mf{s}_n \text{,} \\
  \abs{\tilde{v}_{n+1}} \abs{e^{\tilde{\Phi}_n} - 1} &\quad \sigma \leq \mf{s}_n
\end{cases} \\
&\geq \begin{cases}
  \abs{\tilde{v}_n} \cdot K_0 \min(\sigma - \mf{s}_n, 1) &\quad \sigma \geq \mf{s}_n \text{,} \\
  \abs{\tilde{v}_{n+1}} \cdot K_0 \min(\mf{s}_n - \sigma, 1) &\quad \sigma \leq \mf{s}_n \text{.}
\end{cases} \qedhere
\end{align*}
\end{proof}

\begin{proposition}
For any $n \geq n_0$, the quantity
\begin{equation}
\label{psi_n} \psi_n := \paren{in^{2\alpha} T_1 + T_{2,n}} (c_{n,0}+c_{n,\star}+\omega_n)
\end{equation}
satisfies, for any $N+k \leq I_n^{\star\star}$, the estimate
\begin{align}
\label{estimate_psi_n} \abs{D_n^N \psi_n} &\leq n^\beta n^{2 (\alpha+\beta+k)} n^{(\beta-\alpha) I_n^{\star\star}} \abs{\sigma - \mf{s}_n}^k \text{,} \\
\notag \abs{D_n^N \psi_n} &\leq n^\beta n^{2 (\alpha+\beta+k)} n^{(\beta-\alpha) I_n^{\star\star}} \abs{\sigma - \mf{s}_{n-1}}^k \text{.}
\end{align}
\end{proposition}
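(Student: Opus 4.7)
The plan is to combine three ingredients: a uniform bound on $D_n^M \psi_n$ over all of $\Omega_n$ for $M$ up to roughly $I_n^{\star\star}$; the vanishing of $D_n^{\leq I_n^{\star\star}} \psi_n$ on $S_n \cup S_{n-1}$ from \eqref{equation_omega_n}; and a Taylor expansion in $\sigma$ transverse to the graphs $S_n$, $S_{n-1}$ produced in Proposition \ref{interference_prop}.

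First, I would split $\psi_n = \psi_n^{(1)} + \psi_n^{(2)}$, where $\psi_n^{(1)} := (in^{2\alpha} T_1 + T_{2,n})(c_{n,0}+c_{n,\star})$ and $\psi_n^{(2)} := (in^{2\alpha} T_1 + T_{2,n}) \omega_n$. The first piece is already controlled by \eqref{prop_estimate_sum} together with the identity $\psi_n^{(1)} = n^{-I_n^\star \alpha} T_{2,n} c_{n,I_n^\star}$ used in its proof, giving $|D_n^M \psi_n^{(1)}| \leq n^\beta n^{2(\beta-\alpha) I_n^\star}$ whenever $M + 2 + 2 I_n^\star \leq I_n$. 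For the second piece, the operator $in^{2\alpha} T_1 + T_{2,n}$ carries at most two $D_n$-derivatives with coefficient bounded by $n^{2\alpha}$ (from the $T_1$ prefactor) together with the $n^8$ loss from \eqref{estimate_T_2}, so combining with \eqref{estimate_omega_n} yields $|D_n^M \psi_n^{(2)}| \leq n^\beta n^{2\alpha} n^{(\beta-\alpha) I_n^{\star\star}}$ for $M + 2 \leq I_n^{\star\star}$. Since $2 I_n^\star > I_n^{\star\star}$ and $\beta < \alpha$, the second contribution dominates, giving the uniform estimate $|D_n^M \psi_n| \leq n^\beta n^{2\alpha} n^{(\beta-\alpha) I_n^{\star\star}}$ for $M \leq I_n^{\star\star} - 2$.

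Next, fix $y \in \mc{I}$. Because $S_n$ is the graph $\{\sigma = \mf{s}_n(y)\}$ and every $N' \leq I_n^{\star\star}$ derivative of $\psi_n$ in $D_n$ vanishes on $S_n$, every pure transverse derivative $\partial_\sigma^j (D_n^N \psi_n)$ with $j + N \leq I_n^{\star\star}$ also vanishes there. For $N + k \leq I_n^{\star\star}$, Taylor's theorem in $\sigma$ between $\mf{s}_n(y)$ and $\sigma$ then reduces to the remainder,
\[
| D_n^N \psi_n ( \sigma, y ) | \leq \frac{ | \sigma - \mf{s}_n (y) |^k }{ (k-1)! } \sup | \partial_\sigma^k D_n^N \psi_n | = \frac{ n^{2k} | \sigma - \mf{s}_n (y) |^k }{ (k-1)! } \sup | \partial_{\sigma, n}^k D_n^N \psi_n | \text{,}
\]
so that the uniform estimate above, applied to $|D_n^{N+k} \psi_n|$, produces the claimed bound with prefactor $n^{2k} \cdot n^{2\alpha + \beta} \cdot n^{(\beta-\alpha) I_n^{\star\star}}$, which is absorbed into $n^\beta n^{2(\alpha+\beta+k)} n^{(\beta-\alpha) I_n^{\star\star}}$ by the usual trick of inflating $\beta$ to swallow the $K_N$ constants. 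The estimate for $S_{n-1}$ follows by running the identical argument with $\mf{s}_{n-1}$ in place of $\mf{s}_n$, using the $j=1$ data in \eqref{definition_data}.

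The main obstacle is the bookkeeping of derivative budgets across the nested indices $I_n$, $I_n^\star$, $I_n^{\star\star}$: I need to verify that $N + k \leq I_n^{\star\star}$ still allows me to apply the uniform bound to $|D_n^{N+k} \psi_n|$, which a priori requires $N + k + 2 \leq I_n^{\star\star}$. This slight gap is closed by the built-in slack in the definition of $I_n^{\star\star} = \lfloor (I_n^\star - 2)/2 \rfloor$ and by the freedom to absorb additive shifts into powers $n^\beta$ using Proposition \ref{estimate_K_N}. Once this accounting is carried out, the rest is a direct combination of the Taylor remainder with the previously established estimates.
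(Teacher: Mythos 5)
Your proposal follows essentially the same route as the paper's proof: use the vanishing of $D_n^{\leq I_n^{\star\star}}\psi_n$ on $S_n\cup S_{n-1}$ from \eqref{equation_omega_n}, apply Taylor's theorem in $\sigma$ transverse to the graph $\sigma=\mf{s}_n(y)$ (picking up the $n^{2k}|\sigma-\mf{s}_n|^k$ remainder factor upon converting $\partial_\sigma^k$ to $\partial_{\sigma,n}^k$), and close by the uniform supremum bound on $D_n^{N+k}\psi_n$ obtained from \eqref{prop_estimate_sum} and \eqref{estimate_omega_n}. The paper writes the supremum bound as a single line while you explicitly decompose $\psi_n$ into its $(c_{n,0}+c_{n,\star})$-part and its $\omega_n$-part, but this is exactly what "using \eqref{prop_estimate_sum} and \eqref{estimate_omega_n}" means, so there is no genuine difference in method.

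One small comment on your final paragraph: you rightly notice that applying the uniform bound to $D_n^{N+k}\psi_n$ requires $N+k+2\leq I_n^{\star\star}$ (to leave room for the two derivatives in $T_{2,n}$ landing on $\omega_n$), which is an overshoot of two orders relative to the hypothesis $N+k\leq I_n^{\star\star}$. However, your proposed remedy — absorbing the discrepancy into powers of $n^\beta$ via Proposition \ref{estimate_K_N} — does not address it: that trick removes $K_N$-type constants, not derivative-order constraints. The actual resolution is simply a harmless bookkeeping adjustment (either restrict to $N+k\leq I_n^{\star\star}-2$ in the statement, or note that \eqref{estimate_omega_n} can be pushed two orders higher at the cost of adjusting $\beta$, since the data estimates \eqref{estimate_data} hold up to $N\leq I_n^\star-M-2\geq I_n^{\star\star}+2$). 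The paper glosses over the same point, so this is a shared imprecision rather than a defect unique to your argument.
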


\begin{proof}
From the property \eqref{equation_omega_n} of $\omega_n$, the function $\psi_n$ vanishes at order $I_n^{\star\star}$ on $S_n \cup S_{n-1}$.
As a consequence, for any $N+k\leq I_n^{\star\star}$, using \eqref{prop_estimate_sum} and \eqref{estimate_omega_n},
\begin{align*}
\abs{D^N_n \psi_n} &\leq K_{N+k} n^{2k} \abs{\sigma - \mf{s}_n}^k \sup_{ \Omega_n } \abs{D^{N+k}_n \psi_n} \\
&\leq n^\beta n^{2(\alpha+\beta+k)} n^{(\beta-\alpha) I_n^{\star\star}} \abs{\sigma - \mf{s}_n}^k \text{,}
\end{align*}
which is the first part of \eqref{estimate_psi_n}.
The second bound follows from a similar analysis around $S_{n-1}$. 
\end{proof}

We can now finally establish the desired properties of the potential $a$:

\begin{proposition}
$a$ is a smooth function in a neighbourhood of $\sigma = 0$ in $\Omega$.
Furthermore,
\begin{equation}
\label{estimate_a} \lim_{\sigma_0 \searrow 0} \sup_{ \lbrace \sigma = \sigma_0 \rbrace } \abs{\sigma^{-\mu} D^N a} = 0 \text{,} \qquad N, \mu > 0 \text{.}
\end{equation}
\end{proposition}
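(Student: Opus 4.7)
The plan is to analyse $a = \mc{P} u / u$ band-by-band, using the high-order vanishing of $\mc{P} u$ on the candidate zero sets $S_n$ together with the transverse lower bound \eqref{proposition_s_n_tilde} on $|u|$. By the discrete overlap structure of the $\Omega_n$'s and the support property \eqref{supp_v_n_tilde}, any point $x \in \Omega$ sufficiently close to $\sigma = 0$ lies either (i) in the support of exactly one $\tilde{v}_n$, in which case $u = \tilde{v}_n$, or (ii) in the overlap $\operatorname{supp} \tilde{v}_n \cap \operatorname{supp} \tilde{v}_{n+1}$, in which case $u = \tilde{v}_n + \tilde{v}_{n+1}$. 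In case (i), $u$ cannot vanish, because the amplitude $c_{n,0} + c_{n,\star} + \omega_n$ is bounded below by $K_0^{-1}$ on the region $\chi_n = 1$ by \eqref{estimate_c_0}, \eqref{estimate_c_star}, and \eqref{estimate_omega_n}, once $\alpha$ is chosen large relative to $\beta$. In case (ii), the lower bound \eqref{proposition_s_n_tilde} together with \eqref{S_tilde_n} shows that $|u|$ is strictly positive away from $S_n$ and vanishes at most linearly in the distance to $S_n$.

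Next, I would rewrite $\mc{P}u$ on each piece via the conjugation identity \eqref{T1_T2}. In case (i), $\mc{P} \tilde{v}_n = e^{i n^{2\alpha} \varphi} e^{f_n} \psi_n$, while in case (ii), $\mc{P} u = e^{in^{2\alpha}\varphi} e^{f_n} \psi_n + e^{i(n+1)^{2\alpha}\varphi} e^{f_{n+1}} \psi_{n+1}$. The crucial observation is that both $\psi_n$ and $\psi_{n+1}$ vanish to order $I_n^{\star\star}$ on $S_n$, thanks to the design of $\omega_n$ and $\omega_{n+1}$ in Proposition \ref{omega_n} (noting that $S_n$ is one of the two hypersurfaces on which $\omega_{n+1}$ was constructed to interpolate). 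Consequently, Taylor expansion transverse to $S_n$ shows that the numerator $\mc{P} u$ vanishes to order $I_n^{\star\star}$ on $S_n$ while $u$ vanishes to exactly first order there, so the quotient extends as a $C^{I_n^{\star\star} - 2}$ function across $S_n$. Since $I_n^{\star\star} \nearrow \infty$ as $n \nearrow \infty$, this yields smoothness of $a$ on a neighbourhood of $\sigma = 0$.

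For the quantitative decay \eqref{estimate_a}, I would combine \eqref{estimate_psi_n}, applied with $k \geq N+1$, with the lower bound \eqref{proposition_s_n_tilde}. In case (i), the quotient reduces to $\psi_n / (c_{n,0} + c_{n,\star} + \omega_n)$, and each of its $D^N$-derivatives is bounded by a factor $n^\beta n^{(\beta - \alpha) I_n^{\star\star}}$ times an $(N, \alpha, \beta)$-dependent power of $n$; choosing $\alpha \gg \beta$ gives super-polynomial decay in $n$. In case (ii), a Leibniz expansion of $\mc{P} u / u$ produces terms with denominators up to $|u|^{N+1}$; on either side of $S_n$, the dangerous factors $|\sigma - \mf{s}_n|^{-1}$ from \eqref{proposition_s_n_tilde} are absorbed by matching powers of $|\sigma - \mf{s}_n|$ extracted from \eqref{estimate_psi_n}, with at least one spare factor of $n^{\beta - \alpha}$ left over. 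Since $\sigma \sim n^{-1}$ on $\Omega_n$, these $n$-decay bounds immediately translate into $\sup_{\sigma = \sigma_0} | \sigma^{-\mu} D^N a | \to 0$ as $\sigma_0 \searrow 0$, for any prescribed $N$ and $\mu$.

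The main obstacle is the derivative bookkeeping near $S_n$: applying $D^N$ to the quotient $\mc{P} u / u$ produces denominators up to $|u|^{N+1}$, each of which must be dominated by a sufficiently high-order vanishing of $\mc{P} u$ on $S_n$. This is exactly why \eqref{estimate_psi_n} was stated with the flexible parameter $k$ up to $I_n^{\star\star} - N$, and why $\omega_n$ was matched to the data \eqref{definition_data} up to order $I_n^{\star\star} + 2$ in Proposition \ref{omega_n}. The growth of $I_n^{\star\star}$ with $n$, guaranteed by Proposition \ref{estimate_K_N}, is what ultimately overwhelms every polynomial loss and produces the super-algebraic decay; the remainder of the argument is a routine, if tedious, application of the Leibniz rule together with the $\alpha \gg \beta$ convention that has been used throughout the section.
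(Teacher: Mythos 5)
Your proposal is correct and follows essentially the same strategy as the paper: the case split on whether one or two bands contribute, the high-order vanishing \eqref{estimate_psi_n} of $\psi_n$ and $\psi_{n+1}$ on $S_n$ from Proposition \ref{omega_n}, the transverse lower bound \eqref{proposition_s_n_tilde} on $|u|$, and the Leibniz bookkeeping converted to $\sigma$-decay via $\sigma \sim n^{-1}$ on $\Omega_n$. The one step you fold into ``routine Leibniz'' that deserves explicit mention is that controlling the mixed factors $|D^L_n\zeta_{n+1}|/|\tilde{v}_n|$ and $|D^{Q_k}_n\tilde{v}_{n+1}|/|\tilde{v}_n|$ on $\sigma \geq \mf{s}_n$ (and symmetrically on $\sigma \leq \mf{s}_n$) requires the one-sided exponential comparison $f_{n+1} - f_n \leq K_0$ from Corollary \ref{prop_sign_f_n}, not merely the lower bound on $|u|$.
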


\begin{proof}
Fix $x \in \Omega$ such that $\sigma (x)$ is sufficiently small---in particular, $x \in \Omega_n$ for some $n > n_0$.
For convenience, we omit $x$ from the notation, though all the estimates below apply just at $x$.
As in the proof of \eqref{u_est}, we can split into two cases, depending on which $\tilde{v}_n (x)$'s are nonzero: 

\vspace{\medskipamount}
\noindent
\textit{1. $x$ lies in the support of only one $\tilde{v}_n$.}
Note $\chi_n = 1$ in this case, hence \eqref{estimate_c_0} and \eqref{estimate_omega_n} imply
\begin{equation}
\label{bound_below} \abs{c_{n,0}+c_{n,\star}+\omega_n} \geq K_0 \text{.}
\end{equation}
Moreover, note from \eqref{T1_T2} that $a (x)$ takes the following form: 
\[
a = \frac{ \mc{P} \tilde{v}_n }{ \tilde{v}_n } = \frac{\psi_n}{c_{n,0}+c_{n,\star}+\omega_n} \text{.}
\]
As a result, using \eqref{estimate_c_0}, \eqref{estimate_c_star}, \eqref{estimate_omega_n}, \eqref{estimate_psi_n}, and \eqref{bound_below}, we can estimate $a$ by
\begin{align}
\label{estimate_a1} \abs{D^N_n a} &\leq K_N \sum_{ \substack{ M+Q=N \text{, } q \leq Q \\ Q_1+\dots+Q_q = Q \text{, } Q_k \neq 0}} \frac{ \abs{D^M_n \psi_n} }{ \abs{c_{n,0}+c_{n,\star}+\omega_n}^{q+1} } \prod_{k=1}^q \abs{D^{Q_k}_n(c_{n,0}+c_{n,\star} + \omega_n)} \\
\notag &\leq n^\beta n^{2(\alpha+\beta)} n^{(\beta-\alpha) I_n^{\star\star}} \\
\notag &\leq n^{-cI_n^{\star\star}} \text{,}
\end{align}
for some constant $c > 0$.
(As usual, we have adjusted $\beta$ and the $K_N$'s as needed.)

\vspace{\medskipamount}
\noindent
\textit{2. $x$ lies only in the supports of $\tilde{v}_n$ and $\tilde{v}_{n+1}$.}
In this case, $a (x)$ takes the form
\[
a = \frac{ \mc{P}\tilde{v}_n + \mc{P}\tilde{v}_{n+1} }{ \tilde{v}_n + \tilde{v}_{n+1} } = \frac{ \zeta_n \psi_n + \zeta_{n+1} \psi_{n+1} }{ \tilde{v}_n + \tilde{v}_{n+1} } \text{,} \qquad \zeta_n := e^{in^{2\alpha}\varphi}e^{f_n} \text{.}
\]
Let us first assume $\sigma \geq \mf{s}_n$ and set $N \leq \smash{\floor{I_n^{\star\star}/10}}$.
In this case, we have
\[
\chi_n = 1 \text{,} \qquad f_{n+1} - f_n \leq K_0 \text{,}
\]
the latter from Proposition \ref{prop_sign_f_n}.
As a consequence, 
\begin{align}
\notag \abs{D^N_n a} &\leq K_N \sum_{ \substack{L+M+Q=N \\ q \leq Q \\ Q_1+\dots+Q_q = Q \\ Q_k \neq 0}} \frac{ \paren{\abs{D^L_n\zeta_n}\abs{D^M_n \psi_n} + \abs{D^L_n\zeta_{n+1}} \abs{D^M_n \psi_{n+1}} } \prod_{k=1}^q \abs{D^{Q_k}_n \paren{ \tilde{v}_n + \tilde{v}_{n+1}}} }{\abs{\tilde{v}_n + \tilde{v}_{n+1}}^{q+1}} \\
\label{estimate_a2_i} &\leq K_N \sum_{ \substack{L+M+Q=N \\ q \leq Q \\ Q_1+\dots+Q_q = Q \\ Q_k \neq 0} } \frac{ \abs{D^M_n \psi_n}+\abs{D^M_n \psi_{n+1}} }{ \min(\abs{\sigma - \mf{s}_n}, 1)^{q+1} } \frac{ \abs{D^L_n \zeta_n} + \abs{D^L_n \zeta_{n+1}} }{ \abs{\tilde{v}_n} } \prod_{k=1}^q \frac{ \abs{D^{Q_k}_n \tilde{v}_n } + \abs{D^{Q_k}_n \tilde{v}_{n+1}} }{\abs{\tilde{v}_n}} \text{.}
\end{align}

We now estimate each factor in \eqref{estimate_a2_i} individually.
From \eqref{estimate_psi_n}, we bound
\begin{equation}
\label{estimate_a2_i1} \frac{ \abs{D^M_n \psi_n}+\abs{D^M_n \psi_{n+1}} }{ \min(\abs{\sigma - \mf{s}_n}, 1)^{q+1} } \leq n^{ \beta + 2(\alpha + \beta + q + 1) + (\beta-\alpha) I_n^{\star\star}} + (n+1)^{ \beta + 2(\alpha + \beta + q + 1) + (\beta-\alpha) I_n^{\star\star}} \text{.}
\end{equation}
For the second factor, one obtains, from Proposition \ref{prop_sign_f_n} and \eqref{bound_below}: 
\begin{align}
\label{estimate_a2_i2} \frac{ \abs{D^L_n \zeta_n} + \abs{D^L_n \zeta_{n+1}} }{ \abs{\tilde{v}_n} } &\leq K_N \frac{ n^{2L\alpha}e^{f_n} + (n+1)^{2L\alpha}e^{f_{n+1}} }{ e^{f_n} \abs{c_{n,0}+c_{n,\star}+\omega_n} } \\
\notag &\leq (n+1)^{\beta + 2L\alpha} \text{.}
\end{align}
Similarly, from Proposition \ref{prop_sign_f_n}, \eqref{v_n_tilde}, and \eqref{bound_below}, we estimate
\begin{align}
\label{estimate_a2_i3} \frac{ \abs{D^{Q_k}_n \tilde{v}_n} + \abs{D^{Q_k}_n \tilde{v}_{n+1}} }{ \abs{\tilde{v}_n} } &\leq K_N \frac{ n^{2Q_k\alpha} e^{f_n} + (n+1)^{2 Q_K \alpha} e^{f_{n+1}} }{ e^{f_n} \abs{c_{n,0}+ c_{n,\star}+\omega_n} } \\
\notag &\leq (n+1)^{\beta + 2Q_k\alpha} \text{.}
\end{align}
Combining \eqref{estimate_a2_i}--\eqref{estimate_a2_i3}, we finally obtain (adjusting constants as needed)
\begin{equation}
\label{estimate_a2} \abs{D^N_n a} \leq (n+1)^{3\beta + 3N\alpha} n^{(\beta-\alpha) I_n^{\star\star}} \leq \paren{\tfrac{n}{2}}^{-cI_n^{\star\star}} \text{,} \qquad c > 0 \text{.}
\end{equation}

Furthermore, in the remaining case $\sigma \leq \mf{s}_n$, we can bound
\[
\abs{D^N_n a} \leq K_N \sum_{ \substack{L+M+Q=N \\ q \leq Q \\ Q_1+\dots+Q_q = Q \\ Q_k \neq 0} } \frac{ \abs{D^M_n \psi_n}+\abs{D^M_n \psi_{n+1}} }{ \min(\abs{\sigma - \mf{s}_n}, 1)^{q+1} } \frac{ \abs{D^L_n \zeta_n} + \abs{D^L_n \zeta_{n+1}} }{ \abs{\tilde{v}_{n+1}} } \prod_{k=1}^q \frac{ \abs{D^{Q_k}_n \tilde{v}_n } + \abs{D^{Q_k}_n \tilde{v}_{n+1}} }{\abs{\tilde{v}_{n+1}}} \text{,}
\]
from which a similar analysis as above also yields \eqref{estimate_a2}.

\vspace{\medskipamount}
\noindent
Combining all the cases, we see that $a$ is indeed everywhere defined near $\sigma = 0$.
Moreover, the desired vanishing \eqref{estimate_a} follows from the estimates \eqref{estimate_a1} and \eqref{estimate_a2}.
\end{proof}

In particular, $u$ and $a$ vanish to infinite order at $\sigma = 0$.
Moreover, since $u$ only vanishes at the $S_n$'s (at least near $\sigma = 0$), the support of $u$ contains a neighbourhood $\Omega \cap \{ 0 < \sigma < \sigma' \}$.
Lastly, to extend the support of $u$, and the function $a$ itself, to all of $\bar{\Omega} \cap \{ \sigma \geq 0 \}$, we can simply arbitrarily modify $u$ (and hence $a$) far away from $\sigma = 0$ to any non-vanishing function.

\section{Asymptotically Anti-de Sitter Spacetimes} \label{sec.aads}

In this section, we apply Theorem \ref{Theorem} to \emph{asymptotically Anti-de Sitter} (\emph{aAdS}) spacetimes, which arise as solutions of the Einstein field equations with a negative cosmological constant and are of interest in both relativity and holography.
We will construct counterexamples to unique continuation for Klein--Gordon equations from appropriate regions of the conformal boundary.

To make our discussions more concrete, we will first detail our constructions on two prototypical spacetimes:\ \emph{planar AdS} (Section \ref{sec:planar}) and \emph{AdS} (Section \ref{sec:pure}).
Then, in Section \ref{sec:aads_general}, we briefly discuss how these counterexamples can be adapted to general aAdS spacetimes, and we connect this to the positive unique continuation results of \cite{Shao22, hol_shao:uc_ads, hol_shao:uc_ads_ns, McGill20}.

\subsection{Planar AdS} \label{sec:planar}

Planar AdS spacetime can be represented as the manifold $(\mc{M}_{plan}, g_{plan})$, with
\begin{equation}
\label{aads_planar} \mc{M}_{plan} = (0, \infty)_r \times \R^d_{ (t, \bar{x}) } \text{,} \qquad g_{plan} := r^{-2} dr^2 + r^2 \eta \text{,}
\end{equation}
where $\eta = -dt^2 + d\bar{x}^2$ is the Minkowski metric in $d$ dimensions.
In particular, $(\mc{M}_{plan}, g_{plan})$ is a solution of the Einstein-vacuum equations (EVE), with cosmological constant $\Lambda = -d(d-1)/2$.

Taking $\rho := r^{-1}$, one sees that planar AdS is conformally isometric to the Minkowski half-space: 
\begin{equation}
\label{aads_planar_conformal} \bar{\mc{M}}_{plan} = (0, \infty)_\rho \times \R^d_{(t, \bar{x})} \text{,} \qquad \bar{g}_{plan} := d \rho^2 + \eta = \rho^2 g_{plan} \text{.}
\end{equation}
We refer to the boundary $\partial \bar{\mc{M}}_{plan} := \{ \rho = 0 \}$ of \eqref{aads_planar_conformal} as the \emph{conformal boundary} of planar AdS.
Note that $\partial \bar{\mc{M}}_{plan}$ inherits a natural Lorentzian structure as $( \R^d, \eta )$.

Consider, for instance, the Klein--Gordon operator on planar AdS:
\begin{equation}
\label{kg_planar} \mc{P}_\mu := \Box_{g_{plan}} + \mu \text{,} \qquad \mu \in \R \text{.}
\end{equation}
Observe that \eqref{kg_planar} is conformally equivalent to the singular wave operator 
\begin{equation}
\label{kg_planar_conformal} \bar{\mc{P}}_\mu := \Box_{\bar{g}_{plan}} + \rho^{-2} \big( \mu - \tfrac{d^2 - 1}{4} \big) + V \text{,}
\end{equation}
with $V$ being a specific smooth and bounded function on $\bar{\mc{M}}_{plan}$.
(More specifically, if $\phi$ solves $\mc{P}_\mu \phi = 0$, then \smash{$\bar{\phi} := \rho^{-(d-1)/2} \phi$} solves \smash{$\bar{\mc{P}}_\mu \bar{\phi} = 0$}.)
In particular, if $\mu \neq (d^2 - 1)/4$, then the operator \eqref{kg_planar_conformal} contains a critically singular potential as in \eqref{general_op}.
As a result, to construct a counterexample to unique continuation for \eqref{kg_planar} from the conformal boundary of planar AdS, it suffices to construct a counterexample to unique continuation for \eqref{kg_planar_conformal} from $\rho = 0$.

\begin{figure}
\centering
\begin{subfigure}{.45\textwidth}
  \centering
  \includegraphics[scale=0.85]{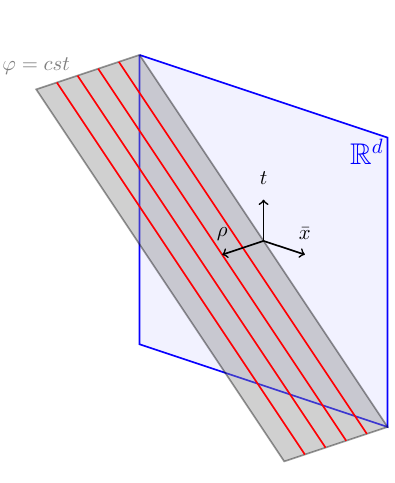}
  \caption{}
\end{subfigure}
\begin{subfigure}{.45\textwidth}
  \centering
  \includegraphics[scale=0.85]{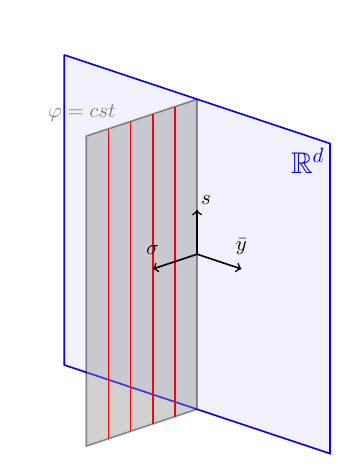}
  \caption{}
\end{subfigure}

\caption{These illustrations represent the change of coordinates from $(\rho, t, x)$ in (A) to $(\sigma, s, y)$ in (B).
In (B), the gradient of $\varphi$ is aligned with the coordinate vector field $\partial_s$.
Null geodesics in both graphics are drawn in \textcolor{red}{red}.}
\label{fig:planar}
\end{figure}

In the following, we will transform the above setting such that Theorem \ref{Theorem} can be applied to $(\bar{\mc{M}}_{plan}, \bar{g}_{plan})$.
First, fix a (Euclidean) unit vector $\bar{k} \in \Sph^{d-2} \subseteq \R^{d-1}$, and let
\begin{equation}
\label{eikonal_planar} \varphi := \tfrac{1}{2} ( \bar{k} \cdot \bar{x} - t ) \in C^\infty ( \bar{\mc{M}}_{plan} ) \text{,} \qquad \sigma := \rho \in C^\infty ( \bar{\mc{M}}_{plan} ) \text{,}
\end{equation}
where ``$\cdot$" is the Euclidean dot product.
Note $\varphi$ and $\sigma$ satisfy \eqref{boundedness_metric} and the first part of \eqref{conditions_s}:
\[
\bar{g}_{plan}^{-1} ( d \varphi, d \varphi ) = 0 \text{,} \qquad \bar{g}_{plan}^{-1} ( d \sigma, d \sigma ) \equiv 1 \text{,} \qquad 2 \, \grad_{ \bar{g}_{plan} } \varphi = \partial_t + \bar{k} \cdot \nabla_{ \bar{x} } \text{.}
\]
In particular, the level sets of $\varphi$ are null hyperplanes in $\bar{\mc{M}}_{plan}$ whose orientations are given by $\bar{k}$.

Furthermore, we make the following change of coordinates on $\R^d$:
\begin{equation}
\label{sy_planar} s := t \text{,} \qquad \bar{y} := \bar{x} - t \bar{k} \text{.}
\end{equation}
Clearly, $( \sigma, s, \bar{y} )$ defines a global coordinate system on $\bar{\mc{M}}_{plan}$, and one can immediately compute
\[
2 \, \grad_{ \bar{g}_{plan} } \varphi = \partial_s \text{,}
\]
so that the second part of \eqref{conditions_s} is also satisfied.
Now, as the above is well-defined on $s_- < s < s_+$ for any $s_- < s_+$, and recalling \eqref{sy_planar}, then we can apply Theorem \ref{Theorem} to obtain a counterexample to unique continuation for $\bar{\mc{P}}_\mu$ from $\rho = 0$ in any a slab $\Omega := \lbrace \sigma < \sigma_0 \rbrace \cap \lbrace s_- < t < s_+ \rbrace$.

Finally, pulling the above constructions back to $( \mc{M}_{plan}, g_{plan} )$, we obtain:

\begin{corollary} \label{corollary_planar_1}
Let $(\mc{M}_{plan}, g_{plan})$ be as in \eqref{aads_planar}, and fix $\mu \in \R$.
Then, for any $t_- < t_+$, there exist $r_0 > 0$ and $u, a \in C^\infty ( \Omega )$, with $\Omega := \{ r > r_0 \} \cap \{ t_- < t < t_+ \}$, such that:
\begin{itemize}
\item $u$ is supported on all of $\Omega$.

\item $u$ and $a$ both vanish to infinite order as $r \nearrow +\infty$.

\item $u$ satisfies the following Klein--Gordon equation on $\Omega$:
\begin{equation}
\label{corollary_planar_kg} ( \Box_{ g_{plan} } + \mu ) u = au \text{.} 
\end{equation}
\end{itemize}
\end{corollary}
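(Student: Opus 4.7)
The plan is to reduce to Theorem~\ref{Theorem} applied to the conformally related operator $\bar{\mc{P}}_\mu$ on $(\bar{\mc{M}}_{plan}, \bar{g}_{plan})$, and then pull the resulting counterexample back to $(\mc{M}_{plan}, g_{plan})$ using the conformal equivalence recorded above \eqref{kg_planar_conformal}.

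First I would verify the hypotheses of Theorem~\ref{Theorem} in the coordinates $(\sigma, \bar{y}, s)$ from \eqref{sy_planar}. In these coordinates the metric $\bar{g}_{plan} = d\sigma^2 + \eta$ has constant components, so trivially $\bar{g}_{plan} \in \mc{B}^\infty_1$ and $\bar{g}_{plan}^{-1}(d\sigma, d\sigma) \equiv 1$, yielding \eqref{boundedness_metric} with $\gamma = 0$. To put $\bar{\mc{P}}_\mu$ in the form \eqref{general_op}, I would absorb the smooth lower-order term $V$ into the singular potential by setting
\[
\xi := \bigl(\mu - \tfrac{d^2-1}{4}\bigr) + \sigma^2 V,
\]
which lies in $\mc{B}^\infty_0$ since $V$ is a specific smooth, bounded function with bounded derivatives on $\bar{\mc{M}}_{plan}$. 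The function $\varphi$ from \eqref{eikonal_planar} reduces to $\tfrac{1}{2}\bar{k} \cdot \bar{y}$ in these coordinates, hence belongs to $\mc{B}^\infty_2$, and both parts of \eqref{conditions_s} already hold as computed in Section~\ref{sec:planar}.

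Next I would invoke Theorem~\ref{Theorem} on the slab $\Omega := (0, \sigma_0) \times \R^{d-1}_{\bar{y}} \times (t_-, t_+)$, for any chosen $\sigma_0 > 0$. This supplies $\bar{u}, \bar{a} \in C^\infty(\Omega)$ with $\bar{u}$ supported throughout $\Omega$, both $\bar{u}$ and $\bar{a}$ vanishing to infinite order as $\sigma \searrow 0$, and $\bar{\mc{P}}_\mu \bar{u} = \bar{a} \bar{u}$. I would then define $u := \sigma^{(d-1)/2} \bar{u}$, so that $\bar{u} = \sigma^{-(d-1)/2} u$ is exactly the conformal transformation recorded in the discussion following \eqref{kg_planar_conformal}. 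The inverse of that stated conformal equivalence,
\[
\mc{P}_\mu \phi = \sigma^{(d+3)/2} \, \bar{\mc{P}}_\mu\bigl(\sigma^{-(d-1)/2}\phi\bigr),
\]
applied to $\phi = u$ then yields $\mc{P}_\mu u = a u$ with $a := \sigma^2 \bar{a}$. Since $\sigma^{(d-1)/2}$ is nonvanishing on $\Omega$, $u$ inherits the support of $\bar{u}$, and the extra factor $\sigma^2$ only improves the infinite-order vanishing of $a$ relative to that of $\bar{a}$.

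Finally I would translate back to $(r, t, \bar{x})$ via $r = \sigma^{-1}$ and set $r_0 := \sigma_0^{-1}$; then $\{0 < \sigma < \sigma_0\}$ becomes $\{r > r_0\}$, and infinite-order vanishing as $\sigma \searrow 0$ is the same as infinite-order vanishing as $r \nearrow +\infty$. Since all the serious work is packaged into Theorem~\ref{Theorem} and the setup of Section~\ref{sec:planar}, there is no substantive obstacle here; the only points worth any care are the absorption of $V$ into $\xi$ (which uses that $\sigma^2 V \in \mc{B}^\infty_0$) and the bookkeeping of the conformal weight $\sigma^2$, which is precisely what preserves the infinite-order vanishing of the resulting potential $a$.
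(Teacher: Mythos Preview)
Your proposal is correct and follows essentially the same approach as the paper: the paper's ``proof'' is precisely the discussion in Section~\ref{sec:planar} preceding the corollary, which verifies \eqref{boundedness_metric}--\eqref{conditions_s} for $(\bar{g}_{plan}, \varphi, \sigma)$, applies Theorem~\ref{Theorem} on the slab, and then ``pulls back'' via the conformal equivalence. You have merely made the last step explicit by writing out $u = \sigma^{(d-1)/2}\bar{u}$ and $a = \sigma^2\bar{a}$, and by noting that $V$ can be absorbed as $\xi = (\mu - \tfrac{d^2-1}{4}) + \sigma^2 V \in \mc{B}^\infty_0$; in fact a direct computation shows $V \equiv 0$ in the planar case, so this absorption is even more benign than you indicate.
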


\begin{remark}
Notice the timespan $t_+ - t_-$ in Corollary \ref{corollary_planar_1} can be arbitrarily large.
This reinforces the fact that the unique continuation results of \cite{Shao22, hol_shao:uc_ads, hol_shao:uc_ads_ns, McGill20} fail on planar AdS from regions on the conformal boundary spanning an arbitrarily large amount of time.
\end{remark}

\begin{remark}
In fact, $r_0$ in Corollary \ref{corollary_planar_1} can be arbitrarily chosen, since $u$ and $a$ can be smoothly extended further inward by taking $u$ to be any arbitrary nonvanishing function.
\end{remark}

\begin{remark}
Note that the special case of \eqref{corollary_planar_kg} with the conformal mass, $\mu := ( d^2 - 1 )/4$, can also be treated using \cite{AB}.
However, since the construction of \cite{AB} is purely local in nature, one would still need Corollary \ref{corollary_planar_1} to produce counterexamples covering an arbitrarily large timespan.
\end{remark}

One feature, which is a consequence of the construction in Theorem \ref{Theorem}, is that the counterexample $u$ obtained from Corollary \ref{corollary_planar_1} is supported on all of $\Omega$.
This could be undesirable, as this $u$ fails to be in $L^2 ( \Omega )$.
One attempt to circumvent this is to restrict $u$ in Corollary \ref{corollary_planar_1} to a bounded subset of $\bar{y}$-values and then consider $u_\ast := \chi u$ for an appropriate cutoff function $\chi$.
However, this $u_\ast$ would, even worse, fail to satisfy an equation of the form \eqref{corollary_planar_kg}, as the corresponding error potential $a_\ast := u_\ast^{-1} ( \Box_{ g_{plan} } + \mu ) u_\ast$ would not be smoothly zero-extendible to $\Omega$.

Next, we demonstrate a trick to overcome this limitation, producing counterexamples on all of $\Omega$ whose supports are nonetheless localised to a bounded sector of null geodesics.
For this, we return to $( \sigma, s, \bar{y} )$-coordinates, and we fix bounded open subsets $B_i \Subset B_o \Subset \R^{d-1}$.
We also fix $0 < \sigma_1 < \sigma_0$ and a smooth function $\Psi: \R^{d-1} \rightarrow [ 0, \sigma_1 ]$ satisfying
\[
\Psi ( \bar{z} ) := \begin{cases}
  0 & \bar{z} \in B_i \text{,} \\
  \sigma_1 & \bar{z} \not\in B_o \text{,}
\end{cases}
\]
and we deform $\sigma$ by setting
\begin{equation}
\label{bump_function} \tilde{\sigma} := \sigma - \Psi ( \bar{y} ) \text{.}
\end{equation}
See Figure \ref{fig:trick_ads} for an illustration of a level set of $\tilde{\sigma}$; observe that level sets of $\sigma$ and $\tilde{\sigma}$ coincide when $\bar{y} \in B_i$, while the level sets of $\tilde{\sigma}$ protrude away from the conformal boundary when $\bar{y} \not\in B_i$.

The crucial observation, then, is that the level sets of $\tilde{\sigma}$ are ruled by the same family of geodesics as the level sets of $\sigma$---namely, those generated by $\varphi$.
As a result, we can now apply Theorem \ref{Theorem} in $( \tilde{\sigma}, s, \bar{y} )$-coordinates instead, which yields a counterexample $\bar{u}$ to unique continuation for $\bar{\mc{P}}_\mu$ \emph{from $\tilde{\sigma} = 0$ that is supported on all of $\tilde{\sigma} > 0$}.
(Moreover, note the potential becomes less singular than before when $\sigma \neq \tilde{\sigma}$.)
Furthermore, since $\bar{u}$ vanishes to infinite order at $\tilde{\sigma} = 0$, it can be smoothly zero-extended to $\tilde{\sigma} < 0$.
By restricting $\bar{u}$ to $\smash{\tilde{\Omega}} := ( 0, \sigma_1 ) \times \R^d$, we obtain a counterexample to unique continuation for $\bar{\mc{P}}_\mu$ whose support contains the sector $\bar{y} \in B_i$ but also lies within the sector $\bar{y} \in \bar{B}_o$.
Pulling this back to $( \mc{M}_{plan}, g_{plan} )$ results in the following:

\begin{figure}
\centering
\begin{subfigure}{.55\textwidth}
  \centering
  \includegraphics[scale=0.95]{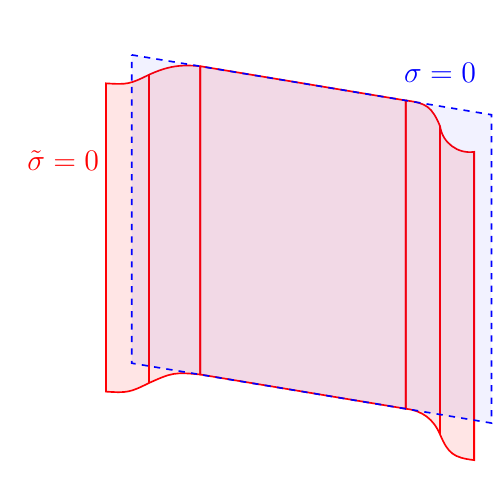}
  \caption{}
  \label{fig:trick_ads}
\end{subfigure}
\begin{subfigure}{.35\textwidth}
  \centering
  \includegraphics{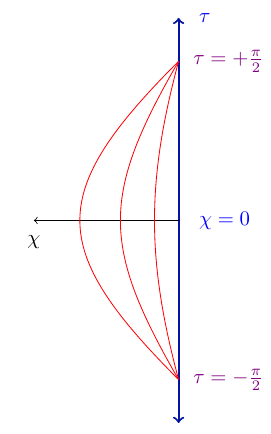}
  \caption{}
  \label{fig:sub1}
\end{subfigure}

\caption{(A) illustrates both the conformal boundary \textcolor{blue}{$\sigma = 0$} and the deformed boundary \textcolor{red}{$\tilde{\sigma} = 0$} used to construct the localised counterexamples of Corollary \ref{corollary_planar_2}.
(B) illustrates a family of null geodesics in pure AdS spacetime \eqref{aads_pure_conf}, projected to the $\tau$-$\chi$-plane; in particular, these geodesics (in \textcolor{red}{red}) start from the conformal boundary, remain near the boundary, and return to the boundary after time $\pi$.}
\end{figure}

\begin{corollary} \label{corollary_planar_2}
Assume the setting and notations of Corollary \ref{corollary_planar_1}, and fix bounded open subsets $B_i \Subset B_o \Subset \R^{d-1}$.
Then, there exist $r_0 > 0$ and $u, a \in C^\infty ( \Omega )$ such that:
\begin{itemize}
\item The support of $u$ is nontrivial and contained within a bounded sector of null geodesics:
\begin{equation}
\label{corollary_planar_support} \Omega \cap \{ \bar{x} - t \bar{k} \in B_i \} \subseteq \operatorname{supp} u \subseteq \Omega \cap \{ \bar{x} - t \bar{k} \in \bar{B}_o \} \text{.}
\end{equation}

\item $u$ and $a$ both vanish to infinite order as $r \nearrow +\infty$.

\item $u$ satisfies the Klein--Gordon equation \eqref{corollary_planar_kg} on $\Omega$.
\end{itemize}
\end{corollary}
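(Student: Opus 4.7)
The plan is to apply Theorem \ref{Theorem} directly in the deformed coordinates $(\tilde{\sigma}, s, \bar{y})$ introduced via \eqref{bump_function}, and to transport the resulting counterexample back to $(\mc{M}_{plan}, g_{plan})$ via the same conformal intertwining recalled in Corollary \ref{corollary_planar_1}.

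First I would verify that the hypotheses of Theorem \ref{Theorem} hold on $\tilde{\Omega} := \{0 < \tilde{\sigma} < \sigma_1\} \cap \{s_- < s < s_+\}$, with $(s_-, s_+) \supset [t_-, t_+]$, treating $\tilde{\sigma}$ as the singular variable. Since $\Psi$ depends only on $\bar{y}$, the vector field $\partial_s$ is unchanged, so $\varphi$ from \eqref{eikonal_planar} continues to satisfy both parts of \eqref{conditions_s}. Using $\bar{g}_{plan}^{-1} = \partial_\rho^2 + \eta^{-1}$ and $d\tilde{\sigma} = d\sigma - \nabla \Psi \cdot d\bar{y}$, a direct computation yields
\begin{equation*}
\bar{g}_{plan}^{-1}(d\tilde{\sigma}, d\tilde{\sigma}) = 1 + |\nabla \Psi|^2 - (\bar{k} \cdot \nabla \Psi)^2 \geq 1,
\end{equation*}
so \eqref{boundedness_metric} holds with $\gamma = 0$. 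The critically singular part of $\bar{\mc{P}}_\mu$ rewrites as $\tilde{\sigma}^{-2}\tilde{\xi}$ with
\begin{equation*}
\tilde{\xi}(\tilde{\sigma}, \bar{y}) := \bigl(\mu - \tfrac{d^2-1}{4}\bigr) \frac{\tilde{\sigma}^2}{(\tilde{\sigma} + \Psi(\bar{y}))^2} + \tilde{\sigma}^2 V,
\end{equation*}
which is bounded since $\tilde{\sigma}/(\tilde{\sigma}+\Psi) \in [0, 1]$.

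Theorem \ref{Theorem} then yields smooth $\bar{u}, \bar{a}$ on $\tilde{\Omega}$, with $\bar{u}$ nontrivial on all of $\tilde{\Omega}$, both vanishing to infinite order as $\tilde{\sigma} \searrow 0$, and $\bar{\mc{P}}_\mu \bar{u} = \bar{a} \bar{u}$. I would zero-extend $\bar{u}$ and $\bar{a}$ smoothly into $\{\tilde{\sigma} < 0\}$ (using the infinite-order vanishing) and restrict to $\Omega' := \{0 < \sigma < \sigma_1\} \cap \{s_- < s < s_+\}$. On $\{\bar{y} \in B_i\}$ we have $\Psi = 0$, so $\tilde{\sigma} = \sigma > 0$ on $\Omega'$ and $\bar{u}$ is nontrivial; on $\{\bar{y} \notin B_o\}$ we have $\Psi \equiv \sigma_1$, so $\tilde{\sigma} \leq 0$ on $\Omega'$ and $\bar{u} \equiv 0$ by the zero-extension. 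Finally, setting $u := \rho^{(d-1)/2}\bar{u}$ and $a := \bar{a}$, invoking the intertwining $\bar{\mc{P}}_\mu \circ \rho^{-(d-1)/2} = \rho^{-(d-1)/2} \circ \mc{P}_\mu$ implicit in \eqref{kg_planar_conformal}, and taking $r_0 := \sigma_1^{-1}$ produces the required solution of \eqref{corollary_planar_kg} on $\Omega$, with the support inclusions \eqref{corollary_planar_support} and infinite-order vanishing at $r = +\infty$ inherited directly from $\bar{u}$.

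The main technical point is verifying $\tilde{\xi} \in \mc{B}_0^\infty$. A Leibniz expansion of $\tilde{\sigma}^k \partial_{\tilde{\sigma}}^k \nabla^\ell \tilde{\xi}$ produces rational combinations of the form $\tilde{\sigma}^j \Psi^m / (\tilde{\sigma} + \Psi)^{k+2}$ weighted by derivatives of $\Psi$, each of which is bounded by splitting into the cases $\tilde{\sigma} \geq \Psi$ and $\tilde{\sigma} < \Psi$ and using $(\tilde{\sigma} + \Psi)^{k+2} \geq \max(\tilde{\sigma}, \Psi)^{k+2}$; combined with the compactness of $\operatorname{supp} \nabla \Psi$, this yields uniform bounds. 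Apart from this bookkeeping, the remainder of the argument simply follows the pattern sketched in the paragraphs preceding the corollary.
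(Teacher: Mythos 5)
Your proposal is correct and follows essentially the same approach the paper sketches in the paragraph preceding Corollary \ref{corollary_planar_2}: deform $\sigma$ to $\tilde{\sigma} = \sigma - \Psi(\bar{y})$, apply Theorem \ref{Theorem} in $(\tilde{\sigma}, s, \bar{y})$-coordinates, zero-extend across $\tilde{\sigma} = 0$, and pull back via the conformal intertwining. You additionally spell out the verification that \eqref{boundedness_metric} and the $\mc{B}^\infty_0$-membership of $\tilde{\xi}$ survive the deformation, which the paper leaves implicit but which is a routine check rather than a different route.
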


\begin{remark}
Note that Corollaries \ref{corollary_planar_1} and \ref{corollary_planar_2} still hold if the Klein--Gordon operator is replaced by a more general $\Box_{ g_{plan} } + \mu + W$ for a sufficiently nice potential $W$.
\end{remark}

\subsection{Pure AdS} \label{sec:pure}

AdS spacetime is the maximally symmetric solution of the EVE with a negative cosmological constant.
Outside the origin ($r = 0$), it can be represented as $(\mc{M}_{AdS}, g_{AdS})$, where
\begin{equation}
\label{aads_pure} \mc{M}_{AdS} := \R_\tau \times ( 0, +\infty )_r \times \Sph^{d-1}_\omega \text{,} \qquad g_{AdS} := -(1+r^2) d \tau^2 + (1+r^2)^{-1} dr^2 + r^2 \, \slashed{g}(\omega)\text{,}
\end{equation}
where $\slashed{g}$ denotes the unit round metric on $\Sph^{d-1}$.
We now use Theorem \ref{Theorem} to construct counterexamples to unique continuation from the AdS conformal boundary for the operator
\begin{equation}
\label{kg_pure} \mc{P}_\mu := \Box_{g_{AdS}} + \mu \text{,} \qquad \mu \in \R \text{.}
\end{equation}

Applying the transformation $\chi := \pi/2 - \arctan r$, we can also reformulate AdS spacetime as
\begin{equation}
\label{aads_pure_conf} \mc{M}_{AdS} := \R_\tau \times \big( 0, \tfrac{\pi}{2} \big)_\chi \times \Sph^{d-1}_\omega \text{,} \qquad g_{AdS} := \tfrac{1}{ \sin^2 \chi } [ -d\tau^2 + d\chi^2 + \cos^2 \chi \cdot \slashed{g}(\omega) ] \text{.}
\end{equation}
In particular, $( \mc{M}_{AdS}, g_{AdS} )$ is conformally isometric to half of the Einstein cylinder $\R \times \Sph^d$, and the conformal boundary of AdS spacetime can then be formally realised as the hypersurface $\chi = 0$, which inherits the Lorentzian structure of $( \R \times \Sph^{d-1}, -d \tau^2 + \slashed{g} )$.

Particular features of AdS geometry---most notably, that its conformal boundary has positively curved and compact cross-sections---make the construction of counterexamples to unique continuation more delicate than on planar AdS.
In particular, null geodesics near the conformal boundary must return to the boundary within a fixed amount of time; see Figure \ref{fig:sub1}.
(This can be understood from the observation that the null geodesics spatially project to great circles on $\Sph^d$, so that any null geodesic starting from the conformal boundary returns to the boundary after time $\pi$.)
This limits the timespan along which counterexamples can be constructed.

More specifically, \cite{hol_shao:uc_ads} showed that unique continuation for \eqref{kg_pure} holds from a slab $\{ \tau_- < \tau < \tau_+ \}$ of the conformal boundary with timespan $\tau_+ - \tau_- > \pi$.
(As above, here we identify the conformal boundary with $\R_\tau \times \Sph^{d-1}_\omega$.)
We next demonstrate that \emph{the above is in fact optimal}, \emph{by constructing counterexamples from boundary slabs with timespan $\tau_+ - \tau_- < \pi$}.

We also remark that the compactness of AdS boundary cross-sections provides yet another difficulty to our construction.
Whereas in planar AdS, one can globally choose a direction of propagation for the crucial null geodesics (namely, the constant vector $\bar{k}$), this is no longer possible in general on the spherical background of AdS.
In other words, \emph{an appropriate eikonal function $\varphi$ cannot be found in a neighbourhood of an entire slab $\{ \tau_- < \tau < \tau_+ \}$ of the conformal boundary}.

Consequently, we must also apply the trick of deforming $\sigma$ in Corollary \ref{corollary_planar_2} in order to localise our construction, so that our counterexamples can be smoothly defined all of $\{ \tau_- < \tau < \tau_+ \}$.
In particular, this trick now becomes an essential part of the construction in AdS spacetime.

\medskip
\noindent
\textit{Construction of counterexamples.}
One strategy for generating the desired counterexamples is to apply Theorem \ref{Theorem} directly with respect to the conformally related $\bar{g}_{AdS} := \sin^2 \chi \cdot g_{AdS}$.
However, here we opt for a different method; we instead exploit the relation between AdS and planar AdS spacetimes in order to take advantage of our constructions from Corollary \ref{corollary_planar_2}.

For this, we first fix $0 < \epsilon < \frac{\pi}{2}$, and we consider the region of AdS spacetime given by
\begin{equation}
\label{pure_planar_region} \mc{M}_{P, \epsilon} := \big\lbrace |\tau| \leq \tfrac{\pi}{2} - \epsilon \text{, } \omega^d < 0 \big\rbrace \subseteq \mc{M}_{AdS} \text{,}
\end{equation}
where we view $\omega \in \Sph^{d-1} \subseteq \R^d$ as a $d$-vector in Cartesian coordinates.
Note $\mc{M}_{P, \epsilon}$ covers precisely half of the slab $| \tau | \leq \frac{\pi}{2}$ of $\bar{\mc{M}}_{AdS}$, by restricting to half of the $\Sph^{d-1}$-component.

Next, since $\mc{M}_{P, \epsilon}$ is contained in the Poincar\'e patch of AdS spacetime, $(\mc{M}_{P, \epsilon}, g_{AdS})$ isometrically embeds into a portion of the planar AdS spacetime \eqref{aads_planar}.
More specifically, by adapting \cite{Bayona_2007, Aharony_2000}, for instance, and applying the coordinate transformation
\begin{equation}
\label{aads_transfo_pure} t := \frac{\sin \tau}{\cos\tau - \cos\chi \, \omega^d} \text{,} \qquad \rho := \frac{\sin\chi}{\cos\tau - \cos\chi \, \omega^d} \text{,} \qquad \bar{x}^j := \frac{\cos\chi\, \omega^j}{\cos\tau - \cos\chi \omega^d} \text{,}
\end{equation}
for any $1 \leq j < d$, we then see that $g_{AdS}$ on $\mc{M}_{P, \epsilon}$ takes the form
\begin{equation}
\label{aads_transfo_plan} g_{AdS} = \rho^{-2} ( d \rho^2 - dt^2 + d \bar{x}^2 ) = g_{plan} \text{.}
\end{equation}
Note the coordinates \eqref{aads_transfo_pure} are smooth and bounded on $\mc{M}_{P, \epsilon}$:
\begin{equation}
\label{aads_transfo_bound} \sup_{ \mc{M}_{P,\epsilon} } ( |t| + \rho + | \bar{x} | ) \leq c \epsilon^{-1} \text{,} \qquad c > 0 \text{.}
\end{equation}
Furthermore, observe that $\rho = 0$ here corresponds to the conformal boundary $\chi = 0$.

Fix now a unit vector $\bar{k} \in \Sph^{d-2} \subseteq \R^{d-1}$.
Then, Corollary \ref{corollary_planar_2} yields a counterexample
\[
u \in C^\infty ( \mc{M}_{plan} \cap \{ \rho < \rho_0 \} \cap \{ |t| \leq c \epsilon^{-1} \} )
\]
(with a corresponding $a$) such that \eqref{corollary_planar_kg} holds, $u$ vanishes to infinite order as $\rho \searrow 0$, and
\begin{align}
\label{pure_cex_supp} \operatorname{supp} u \subseteq ( \mc{M}_{plan} \cap \{ \rho < \rho_0 \} \cap \{ |t| \leq c \epsilon^{-1} \} ) \cap \{ | \bar{x} - t \bar{k} | \leq \delta \epsilon \} \text{,} \\
\notag \operatorname{supp} u \supseteq ( \mc{M}_{plan} \cap \{ \rho < \rho_0 \} \cap \{ |t| \leq c \epsilon^{-1} \} ) \cap \big\{ | \bar{x} - t \bar{k} | \leq \tfrac{\delta}{2} \epsilon \big\} \text{,}
\end{align}
for some sufficiently small constant $0 < \delta \ll 1$.

\begin{figure}
\centering
\begin{subfigure}{.48\textwidth}
  \centering
  \includegraphics{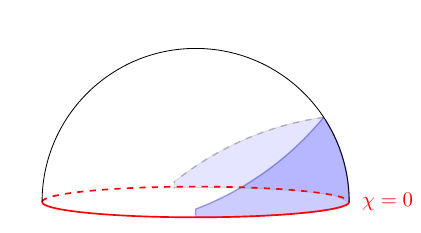}
  \caption{}
  \label{fig:pure1}
\end{subfigure}
\begin{subfigure}{.48\textwidth}
  \centering
  \includegraphics[scale=0.95]{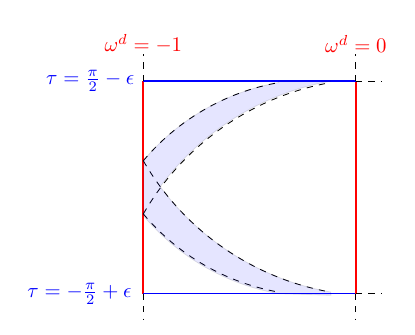}
  \caption{}
  \label{fig:pure2}
\end{subfigure}
\caption{(A) illustrates, at a fixed time $\tau$, the portion $\rho < \rho_0$ of $\mc{M}_{P, \epsilon}$ (shaded in \textcolor{blue}{blue}), on which the restricted counterexample $u_\ast$ is defined.
(B) shows the support of $u_\ast$ (shaded in \textcolor{blue}{blue}), projected to the $\tau$-$\omega^d$-plane; since the support lies away from $\textcolor{red}{\omega^d = 0}$, then $u_\ast$ can be zero-extended to $\omega^d \geq 0$.}
\end{figure}

Let $u_\ast \in C^\infty ( \mc{M}_{P, \epsilon} )$ denote the restriction of $u$ to $\mc{M}_{P, \epsilon}$; see Figure \ref{fig:pure1} for a rough depiction of its domain.
The crucial property needed for $u_\ast$ is the following (see also Figure \ref{fig:pure2}):

\begin{lemma} \label{pure_ustar}
If $\rho_0$ is small enough, then $u_\ast$ vanishes in a neighbourhood of $\{ |\tau| \leq \tfrac{\pi}{2} - \epsilon \text{, } \omega^d = 0 \}$.
\end{lemma}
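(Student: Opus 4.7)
The plan is to pull back the support of $u$ through the coordinate change \eqref{aads_transfo_pure} and verify that the localisation $\{|\bar{x} - t\bar{k}| \leq \delta\epsilon\}$ guaranteed by \eqref{pure_cex_supp} keeps $\operatorname{supp} u_\ast$ away from $\{\omega^d = 0\}$. Since $u_\ast$ is simply the restriction of $u$ to $\mc{M}_{P,\epsilon}$, the lemma reduces to showing $|\bar{x} - t\bar{k}| > \delta\epsilon$ on an open neighbourhood $\mc{N}$ of $\{|\tau| \leq \tfrac{\pi}{2} - \epsilon,\, \omega^d = 0\}$ inside $\mc{M}_{P,\epsilon} \cap \{\rho < \rho_0\}$.

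First I would compute $\bar{x} - t\bar{k}$ at $\omega^d = 0$ and $\chi = 0$. Specialising \eqref{aads_transfo_pure} to this set yields $t = \tan\tau$ and $\bar{x}^j = (\cos\tau)^{-1} \omega^j$ for $1 \leq j < d$, hence
\[
|\bar{x} - t\bar{k}|^2 = \frac{1 - 2\sin\tau\, (\omega'\cdot\bar{k}) + \sin^2\tau}{\cos^2\tau}, \qquad \omega' := (\omega^1, \dots, \omega^{d-1}) \in \Sph^{d-2}.
\]
Minimising over $\omega'\cdot\bar{k} \in [-1, 1]$ gives $\tfrac{1 - |\sin\tau|}{1 + |\sin\tau|}$, and the constraint $|\tau| \leq \tfrac{\pi}{2} - \epsilon$ forces $|\sin\tau| \leq \cos\epsilon$, producing the uniform lower bound
\[
|\bar{x} - t\bar{k}| \;\geq\; c_\epsilon := \sqrt{\tfrac{1 - \cos\epsilon}{1 + \cos\epsilon}} \;>\; 0,
\]
which behaves like $\epsilon/2$ for small $\epsilon$.

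Next, since the change of variables \eqref{aads_transfo_pure} is smooth on $\mc{M}_{P,\epsilon}$, the estimate $|\bar{x} - t\bar{k}| \geq c_\epsilon/2$ propagates by continuity to an open neighbourhood $\mc{N}$ of $\{|\tau| \leq \tfrac{\pi}{2} - \epsilon,\, \omega^d = 0,\, \chi = 0\}$, provided $\rho_0$ (hence the relevant range of $\chi$) and the thickness of the $\omega^d$-neighbourhood are taken sufficiently small. To conclude, I would exploit the fact that the sector parameters $B_i \Subset B_o$ in Corollary \ref{corollary_planar_2} are at our disposal; selecting them so that the induced $\delta$ in \eqref{pure_cex_supp} satisfies $\delta < c_\epsilon/2$ yields $\operatorname{supp} u \cap \mc{N} = \emptyset$, and so $u_\ast$ vanishes on $\mc{N}$.

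The main obstacle is the careful bookkeeping of three interlocking smallness parameters---$\rho_0$, the width of the $\omega^d$-neighbourhood, and the sector width $\delta$---together with the order in which they are fixed: $\epsilon$ first determines $c_\epsilon$, which constrains $\delta$ (equivalently, the choice of $B_i, B_o$), and only then does Corollary \ref{corollary_planar_2} supply the counterexample whose range of validity dictates how small $\rho_0$ must be. Beyond this ordering issue, no analytical input beyond the smoothness of \eqref{aads_transfo_pure} and the elementary trigonometric computation above is required.
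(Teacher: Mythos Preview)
Your argument is correct and complete, but it proceeds along a different line from the paper's. You work in the ``forward'' direction: evaluate $|\bar{x}-t\bar{k}|$ on the locus $\{\omega^d=0,\ \chi=0\}$, obtain the explicit lower bound $c_\epsilon=\sqrt{(1-\cos\epsilon)/(1+\cos\epsilon)}$, and then invoke continuity and compactness (together with the smallness of $\rho_0$, which forces $\chi$ to be small near $\omega^d=0$) to propagate a bound $|\bar{x}-t\bar{k}|\geq c_\epsilon/2$ to a full neighbourhood. The paper instead inverts the coordinate change: it quotes the explicit formula
\[
\omega^d=\frac{\rho^2-1+|\bar{x}|^2-t^2}{\sqrt{(\rho^2+1+|\bar{x}|^2-t^2)^2+4t^2-4\rho^2}}
\]
from \cite{Bayona_2007}, observes that $|\bar{x}-t\bar{k}|\leq\delta\epsilon$ together with \eqref{aads_transfo_bound} forces $\bigl||\bar{x}|^2-t^2\bigr|=|\bar{x}-t\bar{k}|\,|\bar{x}+t\bar{k}|\leq c_2\delta$, and concludes directly that $\omega^d<0$ on the support once $\rho_0$ and $\delta$ are small. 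The paper's route is more quantitative and avoids the soft compactness step, at the cost of needing the inverse formula for $\omega^d$; your route is self-contained from \eqref{aads_transfo_pure} alone. A small cosmetic point: the support condition is $|\bar{x}-t\bar{k}|\leq\delta\epsilon$, so the sharp requirement is $\delta\epsilon<c_\epsilon/2$, i.e.\ $\delta\lesssim 1$ uniformly in $\epsilon$ (since $c_\epsilon\sim\epsilon/2$); your stated $\delta<c_\epsilon/2$ is stronger than needed but of course still sufficient.
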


\begin{proof}
From \eqref{pure_cex_supp}, it suffices to show that when $| \tau | \leq \frac{\pi}{2} - \epsilon$, the region $\{ | \bar{x} - t \bar{k} | \leq \delta \epsilon \}$ lies within $\{ \omega^d < 0 \}$.
For this, we note from \eqref{aads_transfo_pure} that $| \tau | \leq \frac{\pi}{2} - \epsilon$ and $\bar{x} - t \bar{k} \leq \delta \epsilon$ imply
\begin{equation}
\label{eql.omegad_0} |t| \leq c_1 \epsilon^{-1} \text{,} \qquad \big| | \bar{x} |^2 - t^2 \big| = | \bar{x} - t \bar{k} | | \bar{x} - t \bar{k} + 2 t \bar{k} | \leq c_2 \delta \text{,}
\end{equation}
for some universal constants $c_1, c_2 > 0$.
Now, from \cite[Section III]{Bayona_2007}, we have the relation
\[
\omega^d = \frac{ \rho^2 - 1 + | \bar{x} |^2 - t^2 }{ \sqrt{ ( \rho^2 + 1 + | \bar{x} |^2 - t^2 )^2 + 4 t^2 - 4 \rho^2 } } \text{.}
\]
The above, along with the estimates \eqref{eql.omegad_0} and the fact that both $\rho_0$ and $\delta$ are sufficiently small, yields that indeed $\omega^d < 0$, which completes the proof of the lemma.
\end{proof}

By Lemma \ref{pure_ustar}, we can smoothly extend $u_\ast$ by zero to the entire slab $\{ |\tau| \leq \frac{\pi}{2} - \epsilon \} \subseteq \mc{M}_{AdS}$.
Since $g_{AdS} = g_{plan}$ on $\mc{M}_{P, \epsilon}$, then $u_\ast$ also serves as a counterexample to unique continuation from the conformal boundary for \eqref{kg_pure}.
Finally, since $\epsilon$ was arbitrarily chosen, and since AdS spacetime is time-symmetric, we can reformulate our result in terms of \eqref{aads_pure} as follows:

\begin{corollary} \label{aads_corollary_pure}
Let $(\mc{M}_{AdS}, g_{AdS})$ be as in \eqref{aads_pure}, and fix $\mu \in \R$.
Then, for any $\tau_- < \tau_+$ satisfying $\tau_+ - \tau_- < \pi$, there exist $r_0 > 0$ and $u, a \in C^\infty ( \Omega )$, with $\Omega := \{ r > r_0 \} \cap \{ \tau_- < \tau < \tau_+ \}$, such that:
\begin{itemize}
\item The support of $u$ is nontrivial and satisfies, for some $R \simeq \pi - ( \tau_+ - \tau_- )$:
\begin{equation}
\label{corollary_pure_support} \Omega \cap \big\{ | \bar{x} - t \bar{k} | \leq \tfrac{R}{2} \big\} \subseteq \operatorname{supp} u \subseteq \Omega \cap \{ | \bar{x} - t \bar{k} | \leq R \} \text{.}
\end{equation}

\item $u$ and $a$ both vanish to infinite order as $r \nearrow +\infty$.

\item $u$ satisfies the following Klein--Gordon equation on $\Omega$:
\begin{equation}
\label{corollary_pure_kg} ( \Box_{ g_{AdS} } + \mu ) u = au \text{.} 
\end{equation}
\end{itemize}
\end{corollary}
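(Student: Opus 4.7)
\medskip
\noindent\textbf{Proof proposal.}
The strategy is to import the localised planar AdS counterexamples from Corollary \ref{corollary_planar_2} into the Poincar\'e patch of pure AdS, and then smoothly extend the result to a full neighbourhood of the slab $\{\tau_- < \tau < \tau_+\}$ on $\mc{M}_{AdS}$ by exploiting the fact that the planar support tube stays in a single hemisphere of $\Sph^{d-1}$.

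First, by the time translation symmetry of $(\mc{M}_{AdS}, g_{AdS})$, I may assume $\tau_\pm = \pm (\tfrac{\pi}{2} - \epsilon)$, where $2\epsilon := \pi - (\tau_+ - \tau_-) > 0$; this matches the region $\mc{M}_{P, \epsilon}$ from \eqref{pure_planar_region}. The transformation \eqref{aads_transfo_pure} is then a smooth isometry from $(\mc{M}_{P, \epsilon}, g_{AdS})$ into a bounded portion of planar AdS satisfying \eqref{aads_transfo_bound}, with $r \nearrow \infty$ corresponding to $\rho \searrow 0$. In this way, constructing a Klein--Gordon counterexample on $\mc{M}_{P,\epsilon}$ reduces to constructing a planar counterexample on a slab $\{|t| \leq c \epsilon^{-1}\}$ near $\rho = 0$.

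Next, I would apply Corollary \ref{corollary_planar_2} on planar AdS with some fixed unit vector $\bar{k} \in \Sph^{d-2}$ and bounded sectors $B_i \Subset B_o$ contained in a ball of radius $\delta \epsilon$ about the origin, for a sufficiently small $\delta > 0$. This yields a counterexample $u$ to \eqref{corollary_planar_kg} whose support lies in the thin tube $\{|\bar{x} - t \bar{k}| \leq \delta \epsilon\}$ while still being nontrivial on the inner tube of radius $\tfrac{\delta \epsilon}{2}$, and which vanishes to infinite order as $\rho \searrow 0$. Pulling $u$ back through \eqref{aads_transfo_pure} yields $u_\ast \in C^\infty(\mc{M}_{P,\epsilon})$ solving \eqref{corollary_pure_kg} with potential $a$, and inheriting the infinite-order vanishing as $r \nearrow \infty$.

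The main obstacle is Lemma \ref{pure_ustar}: I need $u_\ast$ to vanish in a neighbourhood of $\{\omega^d = 0\}$ inside the slab $\{|\tau| \leq \tfrac{\pi}{2} - \epsilon\}$, so that $u_\ast$ and $a$ admit smooth zero extensions across the opposite hemisphere. Using the explicit formula expressing $\omega^d$ in terms of $(\rho, t, \bar{x})$, together with the bounds $|\tau| \leq \tfrac{\pi}{2} - \epsilon$ and $|\bar{x} - t\bar{k}| \leq \delta \epsilon$, one controls $\big| |\bar{x}|^2 - t^2 \big|$ and concludes $\omega^d < 0$ on $\operatorname{supp} u_\ast$ provided $\rho_0$ and $\delta$ are chosen sufficiently small. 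Granted this, the zero extensions of $u_\ast$ and $a$ to $\Omega := \{r > r_0\} \cap \{\tau_- < \tau < \tau_+\}$ produce the desired counterexample. Finally, the support condition \eqref{corollary_pure_support} follows by tracing the tube $\{|\bar{x} - t\bar{k}| \leq \delta \epsilon\}$ back through \eqref{aads_transfo_pure}, noting that the resulting radius $R$ is proportional to $\epsilon \sim \pi - (\tau_+ - \tau_-)$.
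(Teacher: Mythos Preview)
Your proposal is correct and follows essentially the same route as the paper: reduce to a symmetric slab via time translation, pass to planar AdS through the Poincar\'e patch \eqref{aads_transfo_pure}, import the localised counterexample from Corollary \ref{corollary_planar_2} supported in a thin tube $\{|\bar{x} - t\bar{k}| \leq \delta\epsilon\}$, invoke Lemma \ref{pure_ustar} to confine the support to $\{\omega^d < 0\}$, and then zero-extend across the opposite hemisphere. The only minor remark is that the support condition \eqref{corollary_pure_support} is already stated in the planar coordinates $(t,\bar{x})$, so no ``tracing back'' is needed---$R = \delta\epsilon \simeq \pi - (\tau_+ - \tau_-)$ comes directly from \eqref{pure_cex_supp}.
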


Corollary \ref{aads_corollary_pure} completes our understanding of the unique continuation properties of \eqref{kg_pure}, up to zero-order perturbations.
In particular, it shows that unique continuation fails when $\tau_+ - \tau_- < \pi$, while the results of \cite{Shao22, McGill20, hol_shao:uc_ads, hol_shao:uc_ads_ns} show that unique continuation holds when $\tau_+ - \tau_- > \pi$.

\begin{remark}
Similar to the planar AdS case, Corollary \ref{aads_corollary_pure} still holds if the Klein--Gordon operator is replaced by a more general $\Box_{ g_{AdS} } + \mu + W$ for a sufficiently nice potential $W$.
Furthermore, $r_0$ in Corollary \ref{aads_corollary_pure} can once again be arbitrarily chosen.
\end{remark}

\begin{remark}
One advantage of our approach is that we can provide an explicit formula for the eikonal function $\varphi$ used to construct our counterexamples in Corollary \ref{aads_corollary_pure}:
\[
\varphi = \tfrac{1}{2} ( \bar{k} \cdot \bar{x} - \bar{t} ) = \frac{ \cos \chi \, \, \bar{k} \cdot ( \omega^1, \dots, \omega^{d-1} ) - \sin \tau }{ 2 ( \cos\tau - \cos\chi \, \omega^d ) } \text{.}
\]
\end{remark}

\subsection{General aAdS spacetimes}\label{sec:aads_general}

We now briefly sketch how Theorem \ref{Theorem} can be applied for general aAdS spacetimes.
As details of the analysis would vary depending on the specific properties of the spacetime under consideration, here we opt for conciseness by keeping this discussion informal, and by having this act as a proof of concept, from which a more detailed analysis can be performed for more specific situations.
Let us begin by describing the aAdS geometries we consider:

\begin{assumption}[aAdS spacetime] \label{fg_aads}
Let $( \mc{M}, g )$ be a Lorentzian manifold of the form
\begin{equation}
\label{fg_aads_mfld} \mc{M} := ( 0, \rho_0 )_\rho \times \mc{I} \text{,} \qquad g := \rho^{-2} [ d \rho^2 + \ms{g} ( \rho ) ] \text{,}
\end{equation}
where $\rho_0 > 0$; where $\mc{I}$ a $d$-dimensional manifold, with $d \geq 2$; and where $\ms{g}$ denotes a $\rho$-parametrised family of smooth Lorentzian metrics on $\mc{I}$ that also has the following expansion from $\rho = 0$,
\begin{equation}
\label{fg_aads_exp} \ms{g} ( \rho ) = \mf{g}^{(0)} + \rho^2 \mf{g}^{(2)} + \mc{O} ( \rho^3 ) \text{,}
\end{equation}
with $\mf{g}^{(0)}$ being a Lorentzian metric on $\mc{I}$.
\end{assumption}

Assumption \ref{fg_aads} is an informal statement of the notion of \emph{strongly FG-aAdS segment}, which was more precisely defined and used in \cite{Shao22, McGill20}.
In particular, this is the setting for which the most general unique continuation results for Klein--Gordon equations hold; see \cite[Corollary 5.12]{Shao22}.
The specific form of the metric $g$ in \eqref{fg_aads_mfld}---which in practice arises from a specific choice of a coordinate $\rho$---is known as a \emph{Fefferman--Graham} (abbreviated \emph{FG}) \emph{gauge}.

\begin{remark}
Note that the representation \eqref{aads_planar_conformal} yields that planar AdS is aAdS in the sense of Assumption \ref{fg_aads}.
Pure AdS \eqref{aads_pure} can also be shown to be aAdS via the coordinate change
\[
4 r := \rho^{-1} ( 2 + \rho ) ( 2 - \rho ) \text{.}
\]
\end{remark}

Once again, the aim is to construct counterexamples to unique continuation for the operator
\begin{equation}
\label{kg_aads} \mc{P}_\mu := \Box_g + \mu \text{,} \qquad \mu \in \R
\end{equation}
from the \emph{conformal boundary} ``$\rho = 0$", which can formally manifested as the Lorentzian manifold $( \mc{I}, \mf{g}^{(0)} )$.
As before, it suffices to construct counterexamples for a conformally related operator 
\begin{equation}
\label{aads_fg_conformal} \bar{\mc{P}}_\mu := \Box_{\bar{g}} + \rho^{-2} \big( \mu - \tfrac{d^2 - 1}{4} \big) + V \text{,} \qquad \bar{g} := \rho^2 g = d \rho^2 + \ms{g} ( \rho ) \text{.}
\end{equation}
For this, the key step is to again define the appropriate objects and coordinates (most importantly $\varphi$, $\sigma$, and $s$) so that Theorem \ref{Theorem} can be applied.
In fact, one can view this process as a generalization of the argument for planar AdS detailed in Section \ref{sec:planar}.

\vspace{\medskipamount}
\noindent
\textit{Construction of $\varphi$ and $\sigma$.}
We begin by fixing a spacelike (with respect to $\mf{g}^{(0)}$) hypersurface $\mc{H} \subseteq \mc{I}$, which one can view as a section of the conformal boundary.
By shrinking $\mc{H}$ if necessary, we can also assume that all of $\mc{H}$ is covered by a single bounded coordinate system $\bar{x} := ( x^1, \dots, x^{d-1} )$.
In particular, we can think of $\mc{H}$ as being foliated by level sets of $x^1$.

Next, we extend $\mc{H}$ along $\rho$ to obtain a spacelike (at least for small $\rho_0$) hypersurface in $\mc{M}$:
\begin{equation}
\label{aads_Sigma} \Sigma := ( 0, \rho_0 ) \times \mc{H} \subseteq \mc{M} \text{.}
\end{equation}
Note in particular that $( \rho, \bar{x} )$ then gives a coordinate system on all of $\Sigma$.
In addition:
\begin{itemize}
\item Let $T$ be the unit timelike normal to $\Sigma$ in $\mc{M}$, with respect to $\bar{g}$.

\item Let $E$ be the unit (spacelike) normal to the level sets of $x^1$ in $\Sigma$, also with respect to $\bar{g}$.
\end{itemize}
As a result, the vector field on $\Sigma$ given by
\begin{equation}
\label{aads_N} N := T + E
\end{equation}
is everywhere $\bar{g}$-null and $\bar{g}$-normal to the level sets of $x^1$ in $\Sigma$.

We now consider the family $( \Lambda_p )_{p \in \Sigma}$ of affinely parametrised, maximal, null geodesics with
\begin{equation}
\label{aads_Lambda} \Lambda_p (0) = p \text{,} \qquad \Lambda'_p (0) = N |_p \text{.}
\end{equation}
Note that since $N$ is normal to the level sets of $x^1$ in $\Sigma$, the sets
\begin{equation}
\label{aads_null} \mc{N}_c := \bigcup_{ x^1 (p) = c } ( \operatorname{Im} \Lambda_p )
\end{equation}
form ($\bar{g}$-)null hypersurfaces in an open $\Omega' \subseteq \mc{M}$ that can be obtained by extending $\Sigma$ along the $\Lambda_p$'s, until either the $\Lambda_p$'s form caustics or cut locus points, or the $\Lambda_p$'s terminate (e.g.\ at $\rho = 0$).

We now define our eikonal function $\varphi$ and an adapted set of coordinates $( \sigma, \bar{y} )$ on $\Omega'$ by the property that they are constant along each these geodesics:
\begin{equation}
\label{aads_fcts} \varphi |_{ \Lambda_p } :\equiv x^1 ( \Lambda_p (0) ) \text{,} \qquad \sigma |_{ \Lambda_p } :\equiv \rho ( \Lambda_p (0) ) \text{,} \qquad \bar{y} |_{ \Lambda_p } :\equiv \bar{x} ( \Lambda_p (0) ) \text{,} \qquad p \in \Sigma \text{.}
\end{equation}
In addition, we let $s$ denote (half of) the affine parameter of the $\Lambda_p$'s on $\Omega'$:
\begin{equation}
\label{aads_s} s ( \Lambda_p ( s' ) ) := \tfrac{1}{2} s' \text{,} \qquad p \in \Sigma \text{.}
\end{equation}
Observe that $( \sigma, \bar{y}, s )$ forms a coordinate system on $\Omega'$.

By the first part of \eqref{aads_fcts}, the level sets of $\varphi$ are precisely the null hypersurfaces $\mc{N}_c$'s of \eqref{aads_null}.
Consequently, $\varphi$ satisfies the eikonal equation, and hence the first part of \eqref{conditions_s}:
\[
\bar{g}^{-1} ( d \varphi, d \varphi ) = 0 \text{.}
\]
Furthermore, from \eqref{aads_fcts} and \eqref{aads_s}, one can derive that
\[
\grad_{ \bar{g} } \varphi |_{ \Lambda_p } = \tfrac{1}{2} \Lambda_p' = \tfrac{1}{2} \partial_s \text{,}
\]
which fulfills the second condition in \eqref{conditions_s}. The Figure \ref{fig:const_phi} depicts the above construction. 

\begin{figure}[ht]
\centering
\includegraphics{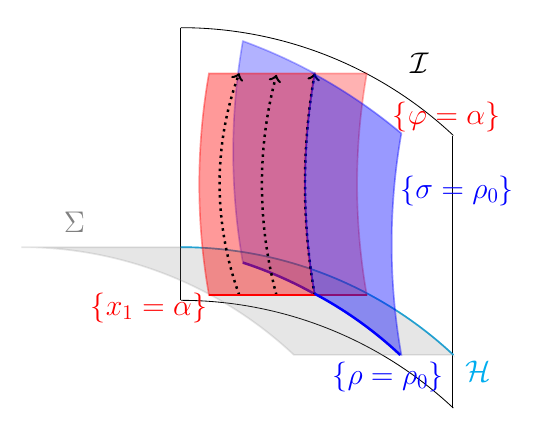}
\caption{Construction of the eikonal function $\textcolor{red}{\varphi}$ and the coordinate \textcolor{blue}{$\sigma$} in the general aAdS case. The choice of a foliation $x_1$ in \textcolor{cyan}{$\mc{H}$}, a spacelike section of $\mc{I}$, allows one to construct null hypersurfaces as level set of a smooth function \textcolor{red}{$\varphi$} by extending along a family of null geodesics (black dotted curves). The level sets of \textcolor{blue}{$\sigma$} are obtained by extending the sets $\Sigma \cap \lbrace \rho = \rho_0 \rbrace$ along the same geodesics.}
\label{fig:const_phi}
\end{figure}

At this point, we make an additional technical assumption on the regularity of our setting:

\begin{assumption} \label{fg_aads_reg}
Let $\Omega \subseteq \Omega'$ such that, identifying $\Omega$ with its image through $( \varphi, \bar{y}, s )$:
\begin{itemize}
\item $\varphi$ lies in $\mc{B}^\infty_2 ( \Omega; \R )$, and both $\sigma^2 \rho^{-2}$, $\sigma^2 V$ lie in $\mc{B}^\infty_0 ( \Omega; \R )$.

\item The components of $\bar{g}$ in the $( \varphi, \bar{y}, s )$-coordinates lie in $\mc{B}^\infty_1 ( \Omega; \R^{ (d+1) \times (d+1) } )$.
\end{itemize}
\end{assumption}

\begin{remark}
Assumption \ref{fg_aads_reg} is stated in manner such that Theorem \ref{Theorem} can be directly applied to our setting.
However, in practice, this $\Omega$ in Assumption \ref{fg_aads_reg} can be obtained by:
\begin{enumerate}
\item Excluding from $\Omega'$ a neighbourhood of the points where the $\Lambda_p$'s degenerate, either from caustics or cut locus points, or by terminating at the conformal boundary $\rho = 0$.

\item Assuming that all the derivatives of $\ms{g}$ in directions along $\mc{I}$ are bounded on $\Omega$.
\end{enumerate}
In particular, if (2) holds, and if $( \mc{M}, g )$ is vacuum, then one can use the EVE and the FG gauge to control $\rho$-derivatives of $\bar{g}$; see \cite{Holzegel22, shao:aads_fg}.
Then, by (1), one can derive that $\bar{g}$ will have the desired $\mc{B}^\infty_1$-regularity in Assumption \ref{fg_aads_reg}.
Furthermore, since $\varphi$ and $V$ are defined from $\bar{g}$, then one can also deduce the desired regularities for $\varphi$, $\sigma^{-2} \rho^2$, and $\sigma^2 V$ in Assumption \ref{fg_aads_reg}.
\end{remark}

\noindent
\textit{Existence of counterexamples.}
We now wish to apply Theorem \ref{Theorem} to $\Omega$, viewed as a subset of $\R^{d+1}$ via $( \sigma, \bar{y}, s )$-coordinates.
As Assumption \ref{fg_aads_reg} implies that $\sigma$ and $\rho$ are comparable, it follows that the level sets of $\sigma$ asymptote to (part of) the timelike conformal boundary $\rho = 0$, which is timelike, hence \eqref{boundedness_metric} holds near $\rho = 0$.
Moreover, Assumption \ref{fg_aads_reg} also implies that
\[
\xi := \sigma^2 \rho^{-2} \big( \mu - \tfrac{ d^2 - 1 }{4} \big) + \sigma^2 V \in \mc{B}^\infty_0 ( \Omega; \R ) \text{.}
\]

As a result, all of the assumptions of Theorem \ref{Theorem} are satisfied, and Theorem \ref{Theorem} now yields a counterexample to unique continuation for $\bar{\mc{P}}_\mu$ from the conformal boundary $\rho = 0$ into $\Omega$.
Finally, reformulating this in terms of $g$ and $\mc{P}_\mu$ yields the following non-uniqueness result:

\begin{corollary} \label{corollary_aads}
Suppose Assumption \ref{fg_aads} holds.
Moreover, let $\varphi$ and $( \sigma, \bar{y}, s )$ be as before, and suppose Assumption \ref{fg_aads_reg} holds.
Then, there exists $\sigma_0 > 0$ and $u, a \in C^\infty ( \Omega \cap \{ \sigma < \sigma_0 \} )$ such that:
\begin{itemize}
\item $u$ is supported on all of $\Omega \cap \{ \sigma < \sigma_0 \}$.

\item $u$ and $a$ both vanish to infinite order as $\rho \searrow 0$.

\item $u$ satisfies the following Klein--Gordon equation on $\Omega \cap \{ \sigma < \sigma_0 \}$:
\begin{equation}
\label{corollary_aads_kg} ( \Box_g + \mu ) u = au \text{.} 
\end{equation}
\end{itemize}
\end{corollary}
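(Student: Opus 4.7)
The plan is to apply Theorem \ref{Theorem} to the conformally rescaled operator $\bar{\mc P}_\mu$ in \eqref{aads_fg_conformal} on $\Omega$ (viewed through the $(\sigma, \bar y, s)$-chart), and then transfer the resulting counterexample back to $\mc P_\mu$ via the standard conformal rescaling $u := \rho^{(d-1)/2} \bar u$, together with $a := \rho^2 \bar a$.

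First, I would verify the hypotheses of Theorem \ref{Theorem} for $\bar{\mc P}_\mu$. Writing $\bar{\mc P}_\mu = \Box_{\bar g} + \xi/\sigma^2$ with
\[
\xi := \sigma^2 \rho^{-2} \big( \mu - \tfrac{d^2 - 1}{4} \big) + \sigma^2 V,
\]
Assumption \ref{fg_aads_reg} furnishes $\bar g \in \mc B^\infty_1$ and $\xi \in \mc B^\infty_0(\Omega; \R)$ (the latter because both $\sigma^2 \rho^{-2}$ and $\sigma^2 V$ are of this class). Since $\sigma$ and $\rho$ are comparable and the conformal boundary $\rho = 0$ is timelike with respect to $\bar g$, the level sets of $\sigma$ inherit timelike character near $\rho = 0$, yielding \eqref{boundedness_metric} with $\gamma = 0$ for $\sigma_0$ small enough. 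The eikonal condition $\bar g^{-1}(d\varphi, d\varphi) = 0$ follows from the fact that, by \eqref{aads_fcts}, the level sets of $\varphi$ are precisely the null hypersurfaces $\mc N_c$ generated by the congruence $\{\Lambda_p\}$. The gauge identity $2\,\grad_{\bar g}\varphi = \partial_s$ follows because $\partial_s$ is tangent to each $\Lambda_p$ with $s$ being (half of) an affine parameter, so $\partial_s$ is $\bar g$-null, while $\varphi$ is constant along each $\Lambda_p$; together these pin down $\grad_{\bar g}\varphi$ as a scalar multiple of $\partial_s$, with the factor fixed by the choice $\Lambda'_p(0) = T + E$.

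Theorem \ref{Theorem} then delivers $\bar u, \bar a \in C^\infty(\Omega \cap \{\sigma < \sigma_0\}; \C)$ with the required support and infinite-order vanishing properties, solving $\bar{\mc P}_\mu \bar u = \bar a \bar u$. Using the explicit conformal relation behind \eqref{aads_fg_conformal} (the same rescaling as in the planar AdS computation in Section \ref{sec:planar}), one verifies that $u := \rho^{(d-1)/2} \bar u$ and $a := \rho^2 \bar a$ satisfy $(\Box_g + \mu) u = a u$ on $\Omega \cap \{\sigma < \sigma_0\}$. The support statement for $u$ is inherited directly from that of $\bar u$, and the infinite-order vanishing of $u$ and $a$ as $\rho \searrow 0$ follows from the corresponding vanishing in $\sigma$ combined with the comparability of $\sigma$ and $\rho$.

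The main technical point I expect is the reconciliation of coordinate systems: Theorem \ref{Theorem} is posed in the $(\sigma, \bar y, s)$-chart, whereas Assumption \ref{fg_aads_reg} phrases regularity in terms of the $(\varphi, \bar y, s)$-chart; transferring between the two requires studying the change of variables induced by the geodesic flow $\Lambda_p$ and checking that the associated Jacobian, together with its iterated derivatives, remains bounded up to $\rho = 0$ in the $\mc B^\infty_m$-sense. Once this bookkeeping is carried out, the corollary is essentially an immediate consequence of Theorem \ref{Theorem} and the conformal reduction outlined above.
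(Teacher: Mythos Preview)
Your proposal is correct and follows essentially the same route as the paper: verify the hypotheses of Theorem \ref{Theorem} for $\bar{\mc P}_\mu = \Box_{\bar g} + \xi \sigma^{-2}$ using Assumption \ref{fg_aads_reg} and the construction of $\varphi, \sigma, s$, then pull back the resulting $(\bar u, \bar a)$ via the conformal rescaling. Your explicit identification $u = \rho^{(d-1)/2}\bar u$, $a = \rho^2 \bar a$ is correct, and the coordinate-chart concern you raise is in fact harmless here since, by \eqref{aads_fcts}, $\varphi = \bar y^1$, so the $(\varphi, \bar y, s)$- and $(\sigma, \bar y, s)$-charts differ only by a relabelling and no nontrivial Jacobian analysis is needed.
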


In particular, Corollary \ref{corollary_aads} constructs a counterexample to unique continuation for \eqref{kg_aads} on a neighbourhood $\Omega$ of the hypersurface $\Sigma$.
However, we note again that $\Omega$ needs not be local, as it can be enlarged as long as the geodesics $( \Lambda_p )_{ p \in \Sigma }$ do not encounter caustics or terminate.

\begin{remark}
As in the proof of Theorem \ref{Theorem}, both $u$ and $a$ in Corollary \ref{corollary_aads} can be extended to all of $\Omega$ by arbitrarily extending $u$ as a non-vanishing function.
\end{remark}

Observe that, similar to Corollary \ref{corollary_planar_1}, the counterexample $u$ from Corollary \ref{corollary_aads} is only defined on the family $( \Lambda_p )_{ p \in \Sigma }$ of null geodesics emanating from a sector $\Sigma$ of $\mc{I}$.
To have a counterexample that is not limited to a localised sector of $\mc{I}$, one can again apply the trick behind Corollary \ref{corollary_planar_2}, by modifying $\sigma$ as in \eqref{bump_function}, but with the analogue of $B_o$ now lying within the image of $\bar{x}$.

This results in counterexamples $\bar{u}$ that both satisfy an equation of the form \eqref{corollary_aads_kg} and are defined in a slab between two spacelike hypersurfaces in $\mc{M}$.
The details of this, which would depend on the specific spacetime and time foliation being considered, are omitted for brevity.

\medskip
\noindent
\textit{Connections to the GNCC.}
Corollary \ref{corollary_aads} also complements the unique continuation results for \eqref{kg_aads} that were proved in \cite{Shao22, hol_shao:uc_ads, hol_shao:uc_ads_ns, McGill20}.
More specifically, the strongest and most recent result in this direction is \cite[Corollary 5.12]{Shao22}, which roughly states that unique continuation from a portion $\mc{D} \subseteq \mc{I}$ of the conformal boundary (using the natural identification of $\mc{I}$ with $\rho = 0$) holds if $\mc{D}$ satisfies the so-called \emph{generalised null convexity criterion}, or \emph{GNCC}:

\begin{definition} \label{gncc}
$\mc{D}$ satisfies the \emph{GNCC} iff there exists $\eta \in C^4 ( \bar{\mc{D}} )$ such that:
\begin{itemize}
\item $\eta > 0$ on $\mc{D}$.

\item $\eta = 0$ on $\mc{D}$.

\item $( \mf{D}^2 \eta - \eta \mf{g}^{(2)} ) ( \mf{Z}, \mf{Z} ) \gtrsim | \mf{Z} |^2$ for any $\mf{g}^{(0)}$-null vector field $\mf{Z}$ on $\mc{I}$.
\end{itemize}
\end{definition}

For more detailed discussions of the GNCC, see \cite[Definition 3.1]{Shao22}.
(For instance, here $| \mf{Z} |^2$ can be defined using an arbitrary reference Riemannian metric on $\mc{I}$.)

For the present discussion, however, the key point is that the GNCC rules out the existence of the crucial family of geodesics from Corollary \ref{corollary_aads} that is generated by $\varphi$.
This is described more precisely in \cite[Theorem 4.1]{Shao22}, which states that, \emph{assuming $\mc{D}$ satisfies the GNCC, any null geodesic over $\mc{D}$ that is sufficiently near $\mc{D}$ must either start or terminate at the conformal boundary in $\mc{D}$}.
Thus, one can think of $\mc{D}$ as being ``sufficiently large" (with respect to $\mf{g}^{(0)}$) such that the geodesic family $( \Lambda_p )_{ p \in \Sigma }$, and hence the eikonal function $\varphi$, cannot be globally constructed over all of $\mc{D}$.
Conversely, Corollary \ref{corollary_aads} can also be interpreted as saying that counterexamples to unique continuation can be constructed in settings where the GNCC fails to hold.

\begin{remark}
With regards to the above discussion, the GNCC only controls whether null geodesics near the conformal boundary terminate at the boundary.
On the other hand, it does not address whether such null geodesics can form caustics in the interior.
However, it is possible that, similar to the GNCC, the formation of caustics near $\rho = 0$ can also be controlled by the geometry of the conformal boundary, though we do not pursue this question in this paper.
\end{remark}

\begin{remark}
Aside from the GNCC, \cite[Theorem 4.1]{McGill20} also shows that $\mf{g}^{(2)}$ bounds from below the minimal amount of time that null geodesics near the conformal boundary must persist without terminating at the conformal boundary.
Thus, in aAdS settings, one always expects counterexamples to unique continuation from small enough portions of the conformal boundary.
\end{remark}

\raggedright
\raggedbottom

\bibliographystyle{plain}
\bibliography{bibli.bib}

\begin{thebibliography}{10}

\bibitem{Aharony_2000}
Ofer Aharony, Steven~S. Gubser, Juan Maldacena, Hirosi Ooguri, and Yaron Oz.
\newblock Large n field theories, string theory and gravity.
\newblock {\em Physics Reports}, 323(3-4):183--386, jan 2000.

\bibitem{alin:non_unique}
S.~Alinhac.
\newblock Non unicit\'e du probl\`eme de {C}auchy.
\newblock {\em Ann. Math.}, 117(1):77--108, 1983.

\bibitem{AB}
S.~Alinhac and M.~S. Baouendi.
\newblock A non uniqueness result for operators of principal type.
\newblock {\em Math. Z.}, 220(1):561--568, 1995.

\bibitem{and_herz:uc_ricci}
M.~T. Anderson and M.~Herzlich.
\newblock Unique continuation results for {Ricci} curvature and applications.
\newblock {\em J. Geom. Phys.}, 58(2):179--207, 2008.

\bibitem{and_herz:uc_ricci_err}
M.~T. Anderson and M.~Herzlich.
\newblock Erratum to ``unique continuation results for {Ricci} curvature and
  applications".
\newblock {\em J. Geom. Phys.}, 60:1062--1067, 2010.

\bibitem{BACHELOT2011527}
A.~Bachelot.
\newblock The {Klein--Gordon} equation in the anti-de {Sitter} cosmology.
\newblock {\em J. Math. Pures Appl.}, 96:527--–554, 2011.

\bibitem{Bayona_2007}
C.~A.~Ball{\'{o}}n Bayona and Nelson R.~F. Braga.
\newblock Anti-de sitter boundary in poincar{\'{e}} coordinates.
\newblock {\em General Relativity and Gravitation}, 39(9):1367--1379, 2007.

\bibitem{biq:uc_einstein}
O.~Biquard.
\newblock Continuation unique \`a partir de l'infini conforme pour les
  m\'etriques {d'Einstein}.
\newblock {\em Math. Res. Lett.}, 15(6):1091--1099, 2008.

\bibitem{breit_freedm:stability_sgrav}
P.~Breitenlohner and D.~Z. Freedman.
\newblock Stability in gauged extended supergravity.
\newblock {\em Annals Phys.}, 144:249--281, 1982.

\bibitem{Calderon1958}
A.~P. Calder\'on.
\newblock Uniqueness in the {Cauchy} problem for partial differential
  equations.
\newblock {\em Amer. J. Math.}, 80:16--36, 1958.

\bibitem{carleman1939}
T.~Carleman.
\newblock {\em Sur un Probl{\'e}me d'unicit{\'e} pour les syst{\`e}mes
  d'{\'e}quations aux d{\'e}riv{\'e}es partielles {\`a} deux variables
  ind{\'e}pendantes}.
\newblock Arkiv f{\"o}r matematik, astronomi och fysik. Almqvist \& Wiksell,
  1939.

\bibitem{Shao22}
A.~Chatzikaleas and A.~Shao.
\newblock A gauge-invariant unique continuation criterion for waves in
  asymptotically {Anti-de Sitter} spacetimes.
\newblock {\em Commun. Math. Phys.}, 395:1--50, 2022.

\bibitem{chru_delay:uc_killing}
P.~Chru\'sciel and E.~Delay.
\newblock Unique continuation and extensions of {Killing} vectors for
  stationary vacuum space-times.
\newblock {\em J. Geom. Phys.}, 61(8):1249--1257, 2011.

\bibitem{Enciso19}
A.~Enciso, A.~Shao, and B.~Vergara.
\newblock {Carleman} estimates with sharp weights and boundary observability
  for wave operators with critically singular potentials.
\newblock {\em J. Eur. Math. Soc.}, 23(10):3459--3495, 2021.

\bibitem{fef_gra:conf_inv}
C.~Fefferman and C.~R. Graham.
\newblock Conformal invariants.
\newblock In {\em {\'Elie Cartan} et les math\'ematiques d'aujourd'hui - Lyon,
  25-29 juin 1984}, Ast\'erisque, pages 95--116. Soci\'et\'e math\'ematique de
  France, 1985.

\bibitem{Gubser_1998}
S.S. Gubser, I.R. Klebanov, and A.M. Polyakov.
\newblock Gauge theory correlators from non-critical string theory.
\newblock {\em Phys. Lett. B}, 428:105--114, 1998.

\bibitem{holmg:uc_anal}
E.~Holmgren.
\newblock {\"Uber} systeme von linearen partiellen differentialgleichungen.
\newblock {\em {\"Ofversigt} af Kongl. Vetenskaps-Akad. {F\"orh.}}, 58:91--105,
  1901.

\bibitem{hol_shao:uc_ads}
G.~Holzegel and A.~Shao.
\newblock Unique continuation from infinity in asymptotically {Anti-de Sitter}
  spacetimes.
\newblock {\em Comm. Math. Phys.}, 347(3):1--53, 2016.

\bibitem{hol_shao:uc_ads_ns}
G.~Holzegel and A.~Shao.
\newblock Unique continuation from infinity in asymptotically {Anti-de Sitter}
  spacetimes {II: Non-static} boundaries.
\newblock {\em Comm. Partial Differential Equations}, 42(12):1871--1922, 2017.

\bibitem{Holzegel22}
G.~Holzegel and A.~Shao.
\newblock The bulk-boundary correspondence for the {Einstein} equations in
  asymptotically {Anti-de Sitter} spacetimes.
\newblock {\em Arch. Ration. Mech. Anal.}, 247:56, 2023.

\bibitem{Holzegel12}
Gustav Holzegel.
\newblock {Well-posedness for the Massive Wave Equation on Asymptotically
  Anti-de Sitter Spacetimes}.
\newblock {\em Journal of Hyperbolic Differential Equations}, 09(02):239--261,
  2012.

\bibitem{Hörmander2009}
Lars H{\"o}rmander.
\newblock {\em Uniqueness for the Cauchy Problem}, pages 220--248.
\newblock Springer Berlin Heidelberg, Berlin, Heidelberg, 2009.

\bibitem{Maldacena_1999}
J.~M. Maldacena.
\newblock The large {$N$} limit of superconformal field theories and
  supergravity.
\newblock {\em Int. J. Theor. Phys.}, 38:1113--1133, 1999.

\bibitem{McGill20}
A.~McGill and A.~Shao.
\newblock Null geodesics and improved unique continuation for waves in
  asymptotically {Anti-de Sitter} spacetimes.
\newblock {\em Class. Quantum Grav.}, 38:054001, 2020.

\bibitem{Mtivier1993CounterexamplesTH}
G.~M{\'e}tivier.
\newblock Counterexamples to h{\"o}lmgren's uniqueness for analytic non linear
  cauchy problems.
\newblock {\em Inventiones mathematicae}, 112:217--222, 1993.

\bibitem{shao:aads_fg}
A.~Shao.
\newblock The near-boundary geometry of {Einstein}-vacuum asymptotically
  {Anti}-de {Sitter} spacetimes.
\newblock {\em Class. Quantum Grav.}, 38:034001, 2020.

\bibitem{toua:go_wave}
A.~Touati.
\newblock Global existence of high-frequency solutions to a semi-linear wave
  equation with a null structure.
\newblock {\em Asymptot. Anal.}, 131:541--582, 2023.

\bibitem{toua:go_eve}
Arthur Touati.
\newblock Geometric optics approximation for the {Einstein} vacuum equations.
\newblock {\em Commun. Math. Phys.}, 402(3):3109--3200, 2023.

\bibitem{toua:go_const}
Arthur Touati.
\newblock High-frequency solutions to the constraint equations.
\newblock {\em Commun. Math. Phys.}, 402(1):97--140, 2023.

\bibitem{Vasy09}
A.~Vasy.
\newblock The wave equation on asymptotically anti-de {Sitter} spaces.
\newblock {\em Anal. PDE}, 5(1):81--144, 2012.

\bibitem{Warnick13}
C.~M. Warnick.
\newblock The massive wave equation in asymptotically {AdS} spacetimes.
\newblock {\em Commun. Math. Phys.}, 321:85--111, 2013.

\bibitem{witten1998anti}
E.~Witten.
\newblock Anti de {Sitter} space and holography.
\newblock {\em Adv. Theor. Math. Phys.}, 2:253--291, 1998.

\end{thebibliography}

\end{document}